\def\lto{{\longrightarrow}}
\def\xto{\xrightarrow}
\newcommand{\fc}{{\mathfrak c}}
\newcommand{\cala}{{\mathcal A}}
\newcommand{\calk}{{\mathcal K}}
\newcommand{\calt}{{\mathcal T}}
\newcommand{\KK}{{\mathbb K}}
\newcommand{\LL}{{\mathbb L}}
\newcommand{\NN}{{\mathbb N}}
\renewcommand{\SS}{{\mathbb S}}
\newcommand{\ZZ}{{\mathbb Z}}
\newcommand{\bfa}{{\mathbf a}}
\newcommand{\be}{\ensuremath{\mathbf e}}
\newcommand{\bff}{{\mathbf f}}
\newcommand{\bx}{{\mathbf x}}
\newcommand{\bdot}{\bullet}
\DeclareMathOperator{\car}{char}
\DeclareMathOperator{\Cliff}{Cliff}
\newcommand{\diff}{{\mathbf d}}
\DeclareMathOperator{\Der}{Der}
\DeclareMathOperator{\End}{ End}
\DeclareMathOperator{\Ext}{Ext}
\DeclareMathOperator{\ev}{ev}
\DeclareMathOperator{\Hoch}{HH}
\DeclareMathOperator{\Hom}{Hom}
\DeclareMathOperator{\id}{id}
\DeclareMathOperator{\jac}{jac}
\DeclareMathOperator{\Ker}{Ker}
\DeclareMathOperator{\op}{op}
\DeclareMathOperator{\Sym}{{\mathbb S}ym}
\DeclareMathOperator{\tOmega}{\widetilde{\Omega}}
\DeclareMathOperator{\Tor}{Tor}
\newcommand{{\sbullet}}{{\scriptstyle\bullet}}
\newcommand{\gs}{\geqslant}
\theoremstyle{definition}
\newtheorem{defn}{Definition}[section]
\newtheorem{remark}[defn]{Remark}
\newtheorem{sit}[defn]{}
\newtheorem{example}[defn]{Example}
\newtheorem{examples}[defn]{Examples}
\theoremstyle{plain}
\newtheorem{proposition}[defn]{Proposition}
\newtheorem{theorem}[defn]{Theorem}
\newtheorem{lemma}[defn]{Lemma}
\newtheorem{cor}[defn]{Corollary}
\begin{document}
\title[Multiplicative Structure of Hochschild Cohomology]
{The Multiplicative Structure on Hochschild Cohomology of a Complete Intersection}

\dedicatory{Dedicated to Hans-Bj\o rn Foxby in admiration}

\author[R.-O. Buchweitz]{Ragnar-Olaf Buchweitz}
\address{Dept.\ of Computer and Math\-ematical Sciences,
University  of Tor\-onto at Scarborough, 
1265 Military Trail, 
Toronto, ON M1C 1A4,
Canada}
\email{ragnar@utsc.utoronto.ca}

\author[C. Roberts]{Collin Roberts}
\address{Dept.\ of Pure Mathematics, University of Waterloo, 
200 University Avenue West, 
Waterloo, ON  N2L 3G1, 
Canada}
\email{cd2rober@math.uwaterloo.ca}

\thanks{The authors were partly supported by NSERC grant 3-642-114-80 and 
this material is based upon work supported by the National Science Foundation under Grant No. 
0932078 000, while the first author was in residence at the Mathematical Science Research 
Institute (MSRI) in Berkeley, California, during the semester of Jan.--May 2013. 
}
\date{\today}

\begin{abstract}
We determine the product structure on Hochschild cohomology of commutative algebras in low degrees, 
obtaining the answer in all degrees for complete intersection algebras. As applications, we consider 
cyclic extension algebras as well as Hochschild and ordinary cohomology of finite abelian groups.
\end{abstract}

\subjclass[2000]{
13D03 
16E40  
13C40  
18G10  
}
\keywords{Hochschild cohomology, complete intersection, Tate model, diagonal approximation, cup and Yoneda
product, strictly graded commutative, group cohomology}

\maketitle 
\setcounter{tocdepth}{1}
{\footnotesize\tableofcontents}
\section{Introduction}

For any associative algebra%
\footnote{All our rings have a multiplicative identity and ring homomorphisms are unital.}
$A$ over a commutative ring $K$, the Hochschild cohomology ring $\Hoch(A/K,A)$ with its
cup (or Yoneda) product is {\em graded commutative\/} by Gerstenhaber's fundamental result \cite{Ger1}. 

However, it is not necessarily {\em strictly\/} graded commutative, that is, the squaring operation 
$\Hoch^{2i+1}(A/K,A)\to  {}_{2}\Hoch^{4i+2}(A/K,A)$, for $i\geqslant 0$, from Hochschild cohomology 
in odd degrees  to, necessarily, the $2$--torsion of the Hochschild cohomology in twice that degree, 
might be nontrivial.

Indeed, this is already so in the simplest example. If $A=K[x]/(x^{2})$ with $2=0$ in $K$, then the 
Hochschild cohomology ring of $A$ over $K$ is isomorphic to a polynomial ring  over $A$ in one variable, 
$\Hoch(A/K,A)\cong A[z]$, where $A\cong \Hoch^{0}(A/K,A)$ is of cohomological degree zero, 
and the variable 
$z$ is of degree one, representing the class $\partial/\partial x\in \Der_{K}(A,A)\cong \Hoch^{1}(A/K,A)$. 
As we are in characteristic $2$, this ring is indeed graded commutative, but certainly not strictly so, the 
square of $z^{2i+1}\in \Hoch^{2i+1}(A/K,A)$ returning the non-zero element 
$z^{4i+2}\in \Hoch^{4i+2}(A/K,A)$.

The purpose here is to clarify the structure of this squaring map in the simplest case, when $i=0$ and 
$A$ is commutative.
We will then deduce from this the entire ring structure when $A$ is a homological complete intersection over $K$.

The crucial point is that the square of a derivation with respect to cup product, as an element in 
$\Hoch^{2}(A/K,A)$, is determined by the {\em Hessian quadratic forms\/} associated to
the defining equations. 

The key technical tool we use is the {\em Tate model\/} of the multiplication map 
$\mu:A^{\ev}=A\otimes_{K}A\to A$ for
a commutative $K$--algebra $A$. We show that the Postnikov tower of such model carries a 
family of compatible co-unital diagonal approximations
and we give an explicit description of such approximation in low degrees to obtain the desired results.

\subsection*{Acknowledgement}
The present paper relies to a large extent on the second author's thesis \cite{Roberts}.

The authors wish to thank \O yvind Solberg for originally asking how to determine the squaring map on 
Hochschild cohomology of a complete intersection, Luchezar L.~ Avramov and Alex Martsinkovsky for 
valuable help with references, and Lisa Townsley for sharing with them a synopsis of her thesis \cite{Town}. 
We also thank the referee of the paper for careful reading and valuable suggestions regarding presentation 
--- aside from pointing out numerous typos that are hopefully taken care of now.

\section{Statement of Results}
Our treatment requires some simple facts about differential forms and the calculus that goes 
along with them. While we cover all facts we will use, the reader may gain the full picture 
by consulting \cite{EGA}.
\begin{sit}
If $P=K[x\in X]$ is a polynomial ring over some commutative ring $K$, with $X$ a set of variables, then
the module of K\"ahler differentials of $P$ over $K$ is a free $P$--module, based on the differentials $dx$ 
for $x\in X$, that is, $\Omega^{1}_{P/K}\cong \bigoplus_{x\in X}Pdx$.

If now $f\in P$ is any polynomial, then we may consider its {\em Taylor expansion\/} $f(\bx +d\bx)$ in
$\Sym_{P}\Omega^{1}_{P/K}\cong K[x,dx; x\in X]\cong P\otimes_{K}P$.

In concrete terms, the polynomial will only depend on finitely many variables, say, $\bx=x_{1},..., x_{n}$, 
and then 
\[
f(\bx+d\bx)=
\sum_{\bfa=(a_{1},..., a_{n})\in\NN^{n}}\frac{\partial^{(|\bfa|)}f(\bx)}{\partial^{\bfa}\bx}d\bx^{\bfa}\,,
\]
where $|\bfa|=\sum_{i} a_{i}$ and the coefficient of $d\bx^{\bfa}$ 
is the corresponding {\em divided partial derivative\/} of $f$, in that the usual partial derivative satisfies
\begin{align*}
\frac{\partial^{|\bfa|}f(\bx)}{\partial^{a_{1}}x_{1}\cdots \partial^{a_{n}}x_{n}}=
a_{1}!\cdots a_{n}!\frac{\partial^{(|\bfa|)}f(\bx)}{\partial^{a_{1}}x_{1}\cdots \partial^{a_{n}}x_{n}}\,.
\end{align*}
\end{sit}

\begin{sit}
The linear part with respect to the $d\bx$ in that expansion can be viewed as the {\em total differential\/} 
of $f$, to wit, $df=\sum_{i=1}^{n}\frac{\partial f(\bx)}{\partial x_{i}}dx_{i}\in \Omega^{1}_{P/K}$.
 
It defines the $P$--linear {\em Jacobian\/} of the given polynomial,
\begin{align*}
\widetilde{\jac_{f}}&: \Der_{K}(P,P) \lto P\,,\quad
D\mapsto D(df)
\end{align*}
on the $P$--module of $K$--linear derivations from $P$ to $P$.

The quadratic part  with respect to the $d\bx$ in the Taylor expansion,
\[
H_{f}(d\bx) = 
\sum_{1\leqslant i\leqslant j\leqslant n}\frac{\partial^{(2)}f(\bx)}{\partial x_{i}\partial x_{j}}dx_{i}dx_{j}\,,
\]
is the {\em Hessian form\/} defined by $f$. By definition it is an element of
$\Sym_{2}\Omega^{1}_{P/K}$ and so defines a $P$--quadratic form 
\begin{align*}
\tilde h_{f}: \Der_{K}(P,P) \lto P
\end{align*}
that sends $D= \sum_{i=1}^{n}p_{i}\frac{\partial}{\partial x_{i}}+\sum_{x\in X'}p_{x}\frac{\partial}{\partial x}$ 
to 
\[
\tilde h_{f}(D)= (D\otimes D)(H_{f}(d\bx)) =
\sum_{1\leqslant i\leqslant j\leqslant n}\frac{\partial^{(2)}f(\bx)}{\partial x_{i}\partial x_{j}}p_{i}p_{j}\,, 
\]
where we have set $X'=X\setminus\{\bx\}$. 
The $P$--quadratic module $\left(\Der_{K}(P,P),\tilde h_{f}\right)$ depends solely on $f$,
not on the chosen coordinate system $x_{i}$. 
\end{sit}

\begin{sit}
If we take a family of polynomials $\bff= \{f_{b}\}_{b\in Y}$, then the associated (linear) {\em Jacobian}, respectively 
the (quadratic) {\em Hessian map}
are given by
\[
\widetilde{\jac_{\bff}}= \sum_{b\in Y}\widetilde{\jac_{f_{b}}}\partial_{f_{b}}\,,\quad
\tilde h_{\bff}= \sum_{b\in Y}\tilde h_{f_{b}}\partial_{f_{b}}:\Der_{K}(P,P)\to \prod_{b\in Y}P\partial_{f_{b}}\,,
\]
where the symbols $\partial_{f_{b}}$ simply represent the corresponding $\delta$--functions in the product of
the free $P$--modules $P\partial_{f_{b}}$.
\end{sit}

\begin{sit}
\label{sit:2.4}
If $A=P/I$, with $I=(f_{b})_{b\in Y}$, then $A$ is a commutative $K$--algebra and the module of   
K\"ahler differentials of $A$ over $K$ has a free $A$--module presentation
\begin{align*}
\bigoplus_{b\in Y}A[f_{b}]\xto{\ j\ }A\otimes_{P} \Omega^{1}_{P/K}\cong \bigoplus_{x\in X}Adx
\xto{\ \hphantom{\cong}\ }
\Omega^{1}_{A/K}\xto{\ \hphantom{\cong}\ } 0\,,
\end{align*}
where the basis element $[f_{b}]$ is mapped to $j([f_{b}])=1\otimes df_{b}$. The $A$--linear map $j$
factors as the surjection $\bigoplus_{b\in Y}A[f_{b}]\to I/I^{2}$ onto the {\em conormal module\/} 
$I/I^{2}$ that sends $[f_{b}]$ to $f_{b}\bmod I^{2}$, followed by the $A$--linear map 
$d\colon I/I^{2}\to A\otimes_{P} \Omega^{1}_{P/K}$ that is induced by the universal 
$K$--derivation, sending $f\bmod I^{2}$ to $1\otimes df$.

It follows that the Jacobian and Hessian descend,
\begin{align}
\label{jacandh}
\tag{$*$}
\jac_{\bff} &= \sum_{b\in Y}\jac_{f_{b}}\partial_{f_{b}}\,,\quad
h_{\bff}= \sum_{b\in Y} h_{f_{b}}\partial_{f_{b}}:\Der_{K}(P,A)\to \prod_{b\in Y}A\partial_{f_{b}}\,,
\end{align}
the $A$--linear map $\jac_{\bff}$ taking its values in $N_{A/P}=\Hom_{A}(I/I^{2},A)$, the 
{\em normal module\/} of $A$ with respect to $P$, viewed as a submodule of  
$\prod_{b\in Y}A\partial_{f_{b}}$, the $A$--dual of the free $A$--module $\bigoplus_{b\in Y}A[f_{b}]$.
The kernel of the Jacobian satisfies  
\[
\Ker\jac_{\bff}\cong \Der_{K}(A,A)\cong \Hoch^{1}(A/K,A)\cong \Ext^{1}_{A^{\ev}}(A,A)\,.
\]  
Note that the last isomorphism holds for any associative $K$--algebra $A$, when $A^{ev}$ denotes the 
enveloping algebra of $A$ over $K$, while in higher degrees one only has
natural maps $\Hoch^{i}(A/K,A)\to  \Ext^{i}_{A^{\ev}}(A,A)$ that are isomorphisms when $A$ is 
projective as $K$--module; see also \ref{HochExt} below.
\end{sit}

Now we can formulate our first result.
\begin{theorem}
\label{ThmA}
Let $A=P/I$ be a presentation of a commutative $K$--algebra as quotient of a polynomial ring $P=K[x; x\in X]$ over 
some commutative ring $K$ modulo an ideal $I=(f_{b})_{b\in Y}\subseteq P$. 
\begin{enumerate}[\rm(1)]
\item The Hessian map $h_{\bff}$ induces a well defined $A$--quadratic map
\begin{align*}
q:\Der_{K}(A,A)&\ \lto\  \Hom_{A}(I/I^{2},A)
\intertext{from the module of derivations to the normal module. Explicitly,}
q\left(\sum_{x\in X}a_{x}\frac{\partial}{\partial x}\right)(f \bmod I^{2}) &= 
\sum_{1\leqslant i\leqslant j\leqslant n}\frac{\partial^{(2)}f(\bx)}{\partial x_{i}\partial x_{j}}a_{i}a_{j}\bmod I\,,
\end{align*}
where, as above, $x_{1},...,x_{n}\in X$ are the variables that
actually occur in a presentation of $f$, linearly ordered in some way.
\item
There is a canonical $A$--linear map
\begin{align*}
\delta:  \Hom_{A}(I/I^{2},A)\lto \Ext^{2}_{A^{\ev}}(A,A)\,,
\end{align*}
whose image is $T^{1}_{A/K}\subseteq \Ext^{2}_{A^{\ev}}(A,A)$, the first {\em tangent\/} or 
{\em Andr\'e--Quillen cohomology\/} of $A$.
\item
The map from $\Der_{K}(A,A)=\Ext^{1}_{A^{\ev}}(A,A)$ to $\Ext^{2}_{A^{\ev}}(A,A)$ that sends a 
derivation $D$ to the class $[D\circ D]$ of its square under Yoneda product   
equals $\delta\circ q$ and takes its image in the $2$--torsion of $T^{1}_{A/K}$.
\end{enumerate}
\end{theorem}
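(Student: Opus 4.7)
The plan is to handle the three parts of Theorem \ref{ThmA} in order, with the substantive content concentrated in (3). The main tool for (2) and (3) will be the Tate model of $\mu\colon A^{\ev}\to A$ and its diagonal approximation, as announced in the introduction.

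For (1), I would first establish a Leibniz-type identity for the Hessian: a direct calculation from the Taylor expansion of $fg$ yields
\[
h_{fg}(D)\;=\;g\cdot h_{f}(D)+f\cdot h_{g}(D)+\jac_{f}(D)\cdot\jac_{g}(D)
\]
for all $f,g\in P$ and $D\in\Der_{K}(P,A)$. The point is that restricting to $D\in\Der_{K}(A,A)=\Ker\jac_{\bff}$ forces $\jac_{f}(D)=D(f)\in I$ for every $f\in I$, since $D$ kills each generator $f_b$ modulo $I$. Consequently, whenever $f,g\in I$ all three summands vanish in $A$: the first two because $f,g\in I$, the third because the Jacobians do. Hence $h_{f}(D)$ vanishes on $I^{2}$ and descends to a well-defined quadratic map $q\colon\Der_{K}(A,A)\to\Hom_{A}(I/I^{2},A)$. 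The same identity delivers $A$-linearity of $q(D)$ in the $I/I^{2}$ argument, and coordinate-independence is built into the intrinsic construction of the Hessian inside $\Sym_{P}\Omega^{1}_{P/K}$.

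For (2), I would construct $\delta$ from the Tate model to be developed later in the paper. Its second-stage generators correspond to lifts of the defining equations $f_{b}$, so an element $\varphi\in\Hom_{A}(I/I^{2},A)$ produces, by restriction to the second layer, an $A^{\ev}$-linear $2$-cocycle and therefore a class $\delta(\varphi)\in\Ext^{2}_{A^{\ev}}(A,A)$. The standard Jacobi--Zariski long exact sequence for $K\to P\to A$ identifies $\ker\delta$ with the image of $\jac_{\bff}$, so $\Imm\delta=\coker\jac_{\bff}=T^{1}_{A/K}$, embedded in $\Ext^{2}_{A^{\ev}}(A,A)$ via the Andr\'e--Quillen-to-Hochschild comparison map.

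For (3), the crux of the theorem, I would compute the Yoneda square of $D$ using an explicit co-unital diagonal approximation on the Tate model. Viewed as a $1$-cocycle $R_{1}\to A$, the derivation $D$ lifts to an $A^{\ev}$-linear chain endomorphism of degree $-1$, and the class $[D\circ D]$ is determined by its value on the second-stage generators. The Taylor expansion
\[
f_{b}(\bx+d\bx)=f_{b}(\bx)+df_{b}+H_{f_{b}}(d\bx)+(\text{higher order})
\]
governs that lift: the linear part $df_{b}$ annihilates $D$ since $D\in\Ker\jac_{\bff}$, while the quadratic part $H_{f_{b}}$ evaluated on $D\otimes D$ returns precisely $\tilde h_{f_{b}}(D)$. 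Hence $[D\circ D]$ is represented by the cocycle $b\mapsto h_{f_{b}}(D)\bmod I$, which is exactly $q(D)$, and therefore $[D\circ D]=\delta(q(D))$ lies in $T^{1}_{A/K}$. The $2$-torsion claim is then immediate from Gerstenhaber's graded commutativity \cite{Ger1}: in cohomological degree $2$ one has $[D\circ D]=-[D\circ D]$, whence $2[D\circ D]=0$.

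The main technical obstacle is the diagonal approximation: one must construct a co-unital diagonal on the Postnikov tower of the Tate model with enough explicit control on the second layer for the Hessian contribution to be extracted cleanly. Once that machinery is in place, the computation in (3) is precisely the controlled bookkeeping that the Taylor expansion was designed to perform.
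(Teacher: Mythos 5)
Your proposal is correct and follows essentially the same route as the paper: part (1) rests on the product rule for the divided second derivative (your Hessian Leibniz identity is exactly the identity $D'^{(2)}(fg)=D'^{(2)}(f)g+D'(f)D'(g)+fD'^{(2)}(g)$ that the paper applies to relations $\sum_b z_bf_b=0$), while (2) and (3) use the Tate model of $\mu\colon A^{\ev}\to A$, the map $\delta$ coming from the second layer with image $T^{1}_{A/K}=\coker\jac_{\bff}$, the explicit co-unital diagonal approximation on $\calt^{(2)}$ whose quadratic Taylor term produces $q(D)$ as the value of $D\cup D$ on the $\diff f_{b}$, and Gerstenhaber graded commutativity for the $2$--torsion claim. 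The only ingredient you leave implicit, and which the paper supplies (Propositions 4.5 and 5.4, Theorem 5.5), is the verification that the cup product defined by the diagonal approximation agrees with the Yoneda product and that the representing cocycle of $D\cup D$ vanishes on the exterior part $\bigwedge^{2}F_{1}$, so that it is indeed supported on the second-stage generators.
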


\begin{remark}
\label{HochExt}\quad
If $A$ as in the preceding result is further {\em projective\/} over $K$, then 
there is a natural bijection $\Hoch^{\bdot}(A/K,A)\to \Ext^{\bdot}_{A^{\ev}}(A,A)$ of graded 
commutative $K$--algebras%
\footnote{Simply using the definition of the algebra structure on
source or target, one obtains an isomorphism from the {\em opposite algebra\/} of 
$\Hoch^{\bdot}(A/K,A)$ to the Yoneda $\Ext$--algebra. However, as already Gerstenhaber 
pointed out in his original paper, sending an element $u$ of degree $m$ to $(-1)^{\binom{m}{2}}u$
provides an algebra isomorphism between a graded commutative algebra and its opposite 
algebra.}. 

Moreover, $T^{1}_{A/K}\subseteq \Hoch^{2}(A/K,A)$ identifies with the $A$--submodule of  classes of 
{\em symmetric\/} Hochschild $2$--cocycles.
\end{remark}

\begin{sit}
In our second result, let the $K$--algebra $A=P/I$ be presented as above, but assume moreover that
\begin{enumerate}[\rm (a)]
\item $A$ is transversal to itself as $K$--module in that 
\[
\Tor^{K}_{+}(A,A) =\oplus_{i>0}\Tor^{K}_{i}(A,A)=0\,.
\]
\item The ideal $I\subseteq P$ is {\em regular\/} in the sense of Quillen \cite[Def.6.10]{Qui}, that is, $I/I^{2}$ 
is a projective $A$--module and the canonical morphism of graded commutative algebras
\begin{align*}
{\bigwedge}_{A}(I/I^{2})\lto \Tor^{P}(A,A)
\end{align*}
is an isomorphism. In ({\em loc.cit.\/}) Quillen points out that  for $P$ noetherian, $I$ is regular in $P$ if, and only if,
it is generated locally by a $P$--regular sequence.
\end{enumerate}
We will say, for short, that $A$ is a {\em homological complete intersection algebra\/} over $K$ if 
conditions (a) and (b) are satisfied. Alternative terminology would be to say that the kernel of the 
multiplication map $\mu:A^{\ev}\to A$ has ``free'' or ``exterior Koszul homology,'' a notion introduced and studied 
by A.~Blanco, J.~Majadas Soto, and A.~Rodicio Garcia in a series of articles following \cite{BMR}, or to 
call the multiplication map a ``quasi-complete intersection homomorphism'' as in \cite{ABS}.
\end{sit}
\begin{sit}
The key properties of such homological complete intersection algebras are that the corresponding 
{\em Tate model\/} is small and the associated {\em cotangent complex\/} $\LL_{A/K}$ is, 
by \cite[Cor.~6.14]{Qui}, concentrated in homological degrees $0$ and $1$. More precisely,
\begin{align*}
\xymatrix{
\LL_{A/K}\ \equiv\ 0\ar[r]& I/I^{2}[1]\ar[r]^-{d}&A\otimes_{P}\Omega^{1}_{P/K}[0]\ar[r]&0
}
\end{align*}
and its shifted $A$--dual, the {\em shifted tangent complex\/} $\mathfrak t_{A}:=\Hom_{A}(\LL_{A/K}[1],A)$ is the 
complex 
\begin{align*}
\xymatrix{
\mathfrak t_{A}\ \equiv\ 0\ar[r]& \Der_{K}(P,A)[-1]\ar[r]^-{\jac_{\bff}}&N_{A/P}[-2]\ar[r]&0\,,
}
\end{align*}
concentrated in cohomological degrees one and two.
\end{sit}

\begin{sit}
\label{Qui}
The link between tangent and Hochschild cohomology was established in Quillen's fundamental 
paper \cite[Sect. 8]{Qui}. In essence, it says that the complex returning Hochschild homology can 
be realized as a DG Hopf algebra whose primitive part is represented by the cotangent complex. 
The qualification here is two-fold: there generally is only a spectral sequence reflecting this 
interpretation that generally only degenerates in characteristic zero, and, on the dual side, 
for Hochschild cohomology vis-\`a-vis the enveloping algebra of the tangent complex, the 
algebra structures in cohomology do not match up. However, for complete intersection algebras, 
things are well controlled because the cotangent complex is so short. In detail, we can 
complement the picture as follows.
\end{sit}

\begin{sit}
The Hessian $A$--quadratic map $q$ from \ref{sit:2.4}(\ref{jacandh}) above can be interpreted
as defining on $\mathfrak t_{A}$ the structure of a graded (super) Lie algebra, where we follow 
the axiomatization of this notion in \cite{Avr1}, and the Jacobian can be viewed as a Lie algebra 
differential. The enveloping algebra of this DG Lie $A$--algebra; see \cite{Sjo}; is readily identified 
with the {\em Clifford algebra\/} $\Cliff(q)$ on that quadratic map and the Jacobian induces a DG 
algebra differential $\partial_{\jac_{\bff}}$ on it. 
In particular, the cohomology of this DG algebra inherits an algebra structure this way.
\end{sit}

\begin{theorem}
\label{thm:ci}
Let $A=P/I$ be a homological complete intersection algebra over $K$, with $P=K[x_{1},..., x_{n}]$
and $I=(f_{1},..., f_{c})$ such that the $f_{j}$ induce an $A$--basis of the $A$--module $I/I^{2}$ that we 
hence further assume to be free.

The cohomology of the DG algebra  $(\Cliff(q), \partial_{\jac_{\bff}})$ 
is then the Yoneda $\Ext$--algebra $\Ext^{\bdot}_{A^{\ev}}(A,A)$ of self extensions of $A$ as
bimodule. In detail, 
\begin{align*}
\Cliff(q) &\cong A\langle t_{1},..., t_{n}; s_{1},..., s_{c}\rangle\,,
\end{align*}
with the $t_{i}$ in degree $1$, the $s_{j}$ in degree $2$, the differential determined by
\begin{align*}
\partial(t_{i})&=\sum_{j=1}^{c}\left(\frac{\partial f_{j}}{\partial x_{i}} \bmod I \right) s_{j}\,,\quad \partial(s_{j})=0\,.
\end{align*}
With respect to the multiplicative structure, the $s_{j}$ are central and
\begin{align*}
t_{i}^{2}&= \sum_{j=1}^{c}\left(\frac{\partial^{(2)}f_{j}}{\partial x_{i}^{2}}\bmod I\right)s_{j}\\
t_{i}t_{i'}+t_{i'}t_{i}&= \sum_{j=1}^{c}\left(\frac{\partial^{2}f_{j}}{\partial x_{i}\partial x_{i'}}\bmod I\right)s_{j}\,.
\end{align*}
If $A$ is projective over $K$, then the natural homomorphism of graded commutative algebras
$\Hoch^{\bdot}(A/K)\to \Ext^{\bdot}_{A^{\ev}}(A,A)$ is an isomorphism, thus, the Hochschild cohomology is given
as cohomology of that DG Clifford algebra too.
\end{theorem}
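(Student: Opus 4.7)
The plan is to construct a small DG $A$-algebra model for $\Ext^\bullet_{A^{ev}}(A,A)$ by dualizing the Tate resolution of $A$ over $A^{ev}$, to read off the Jacobian differential from the Tate differential, and then to extract the cup product from the explicit low-degree diagonal approximation on the Tate model constructed in the preceding sections, with Theorem~\ref{ThmA} supplying the decisive Hessian input for the square of a degree-one class.

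First I would invoke Quillen's structure theorem for a homological complete intersection: hypotheses (a) and (b) together imply that the Tate model for $\mu\colon A^{ev}\to A$ is small, concretely of the form $A^{ev}\langle T_1,\dots,T_n;\, S_1,\dots,S_c\rangle$ with $|T_i|=1$, $|S_j|=2$, underlying graded $A^{ev}$-algebra the free strictly graded commutative one, and differential determined by $d(T_i)=x_i\otimes 1-1\otimes x_i$ together with a choice of $d(S_j)$ whose coefficient on $T_i$ is a lift of $\partial f_j/\partial x_i$. Applying $\Hom_{A^{ev}}(-,A)$ produces a cochain complex whose underlying graded $A$-module is the free strictly graded commutative $A$-algebra on dual generators $t_i$ in cohomological degree one and $s_j$ in cohomological degree two; the induced differential is $\partial(t_i)=\sum_j(\partial f_j/\partial x_i \bmod I)\,s_j$ and $\partial(s_j)=0$, namely the operator $\partial_{\jac_{\bff}}$.

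Next I would transport the multiplicative structure via the co-unital diagonal approximation on the Tate model built in the preceding section. Dualizing gives the Yoneda (cup) product on $\Ext^\bullet_{A^{ev}}(A,A)$. The classes $s_j$ lie in $T^1_{A/K}$, the image under $\delta$ of the symmetric $2$-cocycles (Remark~\ref{HochExt}), hence commute among themselves and, by graded commutativity, with the $t_i$'s as well. The essential computation is the square of $t_i$: applied to the derivation $D=\partial/\partial x_i\in\Der_K(A,A)$, Theorem~\ref{ThmA} evaluates $[D\circ D]=\delta(q(D))\in T^1_{A/K}\subseteq\Ext^2_{A^{ev}}(A,A)$, which through the identification $N_{A/P}\cong \bigoplus_j A\,s_j$ reads as $t_i^2=\sum_j(\partial^{(2)}f_j/\partial x_i^2 \bmod I)\,s_j$; polarizing $q$, or equivalently applying the same argument to $\partial/\partial x_i+\partial/\partial x_{i'}$, gives the mixed relation $t_it_{i'}+t_{i'}t_i=\sum_j(\partial^2 f_j/\partial x_i\partial x_{i'} \bmod I)\,s_j$. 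These are precisely the defining relations of $\Cliff(q)$ over the polynomial center $A[s_1,\dots,s_c]$, which together with the Jacobian differential identifies $(\Cliff(q),\partial_{\jac_{\bff}})$ with the cochain complex computing $\Ext^\bullet_{A^{ev}}(A,A)$. The final statement about $\Hoch^\bullet(A/K)$ under the projectivity hypothesis is then immediate from Remark~\ref{HochExt}.

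The main obstacle I expect is verifying at the cochain level that the explicit diagonal approximation on the Tate model from the earlier sections really produces the Hessian relations claimed for $t_i^2$ and $t_it_{i'}+t_{i'}t_i$: graded commutativity of $\Ext^\bullet_{A^{ev}}(A,A)$ alone only forces the anticommutator to be $2$-torsion inside $T^1_{A/K}$, and it is Theorem~\ref{ThmA} together with the explicit low-degree formula for the diagonal that pins down the exact Hessian coefficient. Once this matches the content of Theorem~\ref{ThmA}, the rest is algebraic bookkeeping.
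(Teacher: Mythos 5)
Your skeleton matches the paper's proof: use the homological complete intersection hypothesis (Proposition \ref{t2}) to see that the Tate model of $\mu:A^{\ev}\to A$ stops with $\calt^{(2)}=A^{\ev}\langle \diff x_{1},\dots,\diff x_{n};\diff f_{1},\dots,\diff f_{c}\rangle$, dualize to get the Koszul-type complex with differential $\partial_{\jac_{\bff}}$, and read off the product from the co-unital diagonal approximation. However, the decisive step in your argument has a genuine gap. You compute $t_{i}^{2}$ by applying Theorem \ref{ThmA} to ``$D=\partial/\partial x_{i}\in\Der_{K}(A,A)$''. This fails on two counts. First, $\partial/\partial x_{i}$ is a derivation of $P$, not of $A$: it need not preserve $I$ (already for $A=K[x]/(x^{2})$ one has $\partial_{x}(x^{2})=2x\notin(x^{2})$), so it does not define an element of $\Der_{K}(A,A)$; correspondingly the cochain $t_{i}={\boldsymbol\partial}_{x_{i}}$ is \emph{not} a cocycle --- its differential is exactly $\sum_{j}(\partial f_{j}/\partial x_{i}\bmod I)\,s_{j}$. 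Theorem \ref{ThmA}, which evaluates the Yoneda square of the class of an honest derivation, therefore does not apply to $t_{i}$ at all. Second, even where it applies, Theorem \ref{ThmA} is a statement about cohomology classes, i.e.\ modulo coboundaries, whereas Theorem \ref{thm:ci} asserts (and requires) an identification of DG algebras at the cochain level, $(\Hom^{\bdot}_{A^{\ev}}(\calt,A),\cup_{\Phi})\cong\Cliff(q)$, from which the algebra isomorphism on cohomology then follows. Knowing products of degree-one \emph{cocycles} in cohomology does not determine the ring $H^{\bdot}(\Cliff(q),\partial_{\jac_{\bff}})$, since that cohomology is not generated by such classes; without a multiplicative quasi-isomorphism at the cochain level the claimed identification of $\Ext^{\bdot}_{A^{\ev}}(A,A)$ with the cohomology of the Clifford DG algebra is not established.

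What closes the gap in the paper is precisely the cochain-level computation you flagged as your ``main obstacle'' but then routed around: \ref{explicitcup} together with Theorem \ref{thm:hesse} evaluates $(f\cup g)(\diff f_{b})$ for \emph{arbitrary} degree-one cochains $f,g$ (not just cocycles), by choosing a suitable monomial presentation of $f_{b}$ in the explicit diagonal approximation $\Phi^{(2)}$ of Proposition \ref{deg2}; this yields the divided Hessian coefficients on the nose, hence the Clifford relations for ${\boldsymbol\partial}_{x_{i}}\cup{\boldsymbol\partial}_{x_{i'}}$ as cochains. Likewise, centrality of the $s_{j}$ cannot be deduced from graded commutativity of the $\Ext$--algebra (again a cohomology-level fact, inapplicable to the non-cocycles $t_{i}$); in the paper it comes from the cocommutativity of $\Phi^{(2)}$ in the $\diff f_{b}$, Corollary \ref{cocomm}. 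Note also that the logical order in the paper is partly the reverse of yours: part (3) of Theorem \ref{ThmA} is itself proved using the explicit cup product on $\Hom_{A^{\ev}}(\calt^{(2)},A)$, so it is downstream of Theorem \ref{thm:hesse} and cannot serve as a substitute for it.
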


\begin{remark}
In concrete terms, expanding on \ref{Qui} above,
the Poincar\'e--Birkhoff--Witt theorem for enveloping algebras of (super) Lie algebras
shows that as differential graded {\em coalgebras}, in particular, as complexes of graded $A$--modules,
one has that
$\left(\Cliff(q),\partial_{\jac_{\bff}}\right)$ is isomorphic to the {\em Koszul complex\/} $\KK$ over 
$\SS=\Sym_{A}(N_{A/P})\cong A[s_{1},...,s_{j}]$ on the sequence
\begin{gather*}
\sum_{j=1}^{c}\left(\frac{\partial f_{j}}{\partial x_{i}}\bmod I\right)s_{j}\in \SS\,;\quad  i=1,..., n.
\end{gather*}
Bigrading the Koszul complex by placing the variables $t_{i}=\partial_{x_{i}}\in \Der_{K}(P,A)$ into
bidegree $(1,0)$ and $s_{j}=\partial_{f_{j}}$ into bidegree $(0,1)$, one regains the 
-- well-known \cite{Wol, GG} -- $A$--linear {\em Hodge decomposition\/}  of the Hochschild cohomology
for such a homological complete intersection,
\begin{align*}
\Hoch^{p}(A/K) &\cong \bigoplus_{i+2j =p} H^{i}(\KK)_{j}\\
H^{i}(\KK)_{j} &\cong H^{i+2j}\left(\Hom_{A}(\SS_{i+j}(\LL_{A/K}[1]), A)\right)\,.
\end{align*}
However, the preceding theorem says that, generally, this decomposition is not compatible with the 
multiplicative structure, in that squaring sends $\Hoch^{1}(A/K) = H^{1}(\KK)_{0}$ to
\begin{align*}
H^{0}(\KK)_{1}&\cong T^{1}_{A/K}=H^{2}(\Hom_{A}(\LL_{A/K}[1], A))\subseteq \Hoch^{2}(A/K)\,, 
\intertext{but {\em not\/} to the other summand}
H^{2}(\KK)_{0}& \cong \Hom_{A}(\Omega^{2}_{A/K},A)=H^{2}(\Hom_{A}(\SS_{2}(\LL_{A/K}[1]), A))
\subseteq \Hoch^{2}(A/K)\,.
\end{align*}
In other words, the ``obvious'' multiplication on the Hodge decomposition needs to be twisted 
or ``quantized'', here from the exterior multiplicative structure of the Koszul complex to that of 
the (homogeneous) Clifford algebra. 

This phenomenon is analogous; see \cite{CvdB} and the discussion therein;  to the situation 
for non-affine smooth schemes in characteristic zero. 
Similarly, for arbitrary commutative rings over a field of characteristic zero, the relation between
Hodge decomposition and multiplicative structure on Hochschild cohomology is studied from
a combinatorial point of view in \cite{BW}.

We hasten to point out that for the affine (homological) complete intersections considered here this 
multiplicative twist can only make a difference when $2$ is not a unit in $A$.
\end{remark}

We end the paper by presenting some simple examples that highlight how the treatment here encompasses
in a unified manner various results scattered throughout the literature.

\section{Tate Models for the Multiplication Map of an Algebra}

Let $S\to R$ be a homomorphism between commutative rings.
In his seminal paper \cite{Ta}, John Tate presented in essence the following construction of a free
$S$--resolution of $R$; see also \cite{Avr2} for further details, and, in particular, \cite{SemCartan} for 
details on differential graded (DG) algebras with divided powers. The special case of the multiplication 
map $R\otimes_{K}R\to R$ for a commutative $K$--algebra $R$ of finite type that we will be 
mainly interested in has also been detailed in \cite{AvVi}.

\begin{sit}
There exists a factorization of the given ring homomorphism as $S\to (\calt,\partial)\xto{\pi} R$ such that
\begin{enumerate}[\rm (i)]
\item
$\calt$ is a strictly graded commutative $S$--algebra of the form
\begin{align*}
\calt_{\bdot}=\Sym_{S}(F_{0})\otimes_{S}{\bigwedge}_{S}\left(\bigoplus_{i\ \text{odd}}F_{i}\right)\otimes_{S}
\Gamma_{S}\left(\bigoplus_{i>0\ \text{, even}}F_{i}\right)\,,
\end{align*}
with divided powers (in even degrees), graded so that $S$ is concentrated in degree $0$, each 
$F_{i}, i\geqslant 0,$ a free $S$--module concentrated in degree $i$. 

Here $\Sym$ denotes the symmetric algebra functor, $\bigwedge$ the exterior algebra functor, 
and $\Gamma$ the divided power algebra functor.

\item
The $S$--algebra differential $\partial:\calt_{\bdot}\to \calt_{\bdot -1}$ 
is compatible with divided powers, thus, uniquely determined by its restriction to 
$\bigoplus_{i\geqslant 1}F_{i}\subseteq\calt$, vanishing necessarily on $F_{0}$ for degree reasons.

\item
The morphism $\pi$ is a quasiisomorphism  of DG algebras, where one endows $R$, concentrated 
in degree zero, with the, only possible, zero differential. 
\end{enumerate}
Such factorization of $S\to R$ is then a {\em Tate model\/} for $R$ over $S$, but, with the usual 
slight abuse, we also simply call  $(\calt,\partial)$ a Tate model, the remaining data understood. 

For given $i$, the module $\oplus_{j\leqslant i}F_{j}$ generates a DG subalgebra $\calt^{(i)}$ 
with divided powers, and 
$\calt^{(0)}\subseteq\cdots\subseteq \calt^{(i)}\subseteq \calt^{(i+1)}\subseteq 
\cdots \subseteq \calt = \bigcup_{i}\calt^{(i)}$ constitutes the 
associated {\em Postnikov tower}. It follows from the description given that the inclusion 
$\calt^{(i)}\to \calt$ induces isomorphisms $H_{j}(\calt^{(i)})\xto{\ \cong\ }H_{j}(\calt)$ for $j<i$ 
and that $\partial|F_{i+1}:F_{i+1}\to \calt^{(i)}$ can be interpreted as an $S$--linear 
{\em attaching map\/} that kills $H_{i}(\calt^{(i)})$.
\end{sit}

\begin{remark}
Viewed just as a complex, such a Tate model constitutes in particular a free resolution of $R$ as 
$S$--module, thus, can be used to calculate $\Tor^{S}(R,-)$ and $\Ext_{S}(R,-)$.

If $S$ is noetherian and $S\to R$ is surjective, one may choose $F_{0}=0$ and each $F_{i}$, 
for $i\geqslant 1$, to be free of finite rank over $S$, and then $\calt$ is in turn free of finite rank 
over $S$ in each degree.
If $R$ is only of finite type as an $S$--algebra, then one may choose $F_{0}$ to be 
a finite free $S$--module and each $F_{i}$ a finite free $\calt^{(0)}=\Sym_{S}(F_{0})$--module. 
\end{remark}

\begin{sit}
Now consider the particular case, where $A$ is a commutative $K$--algebra over some 
commutative ring $K$, and $\mu\colon A^{\ev}:=A\otimes_{K}A=S\to A=R$ is the 
multiplication map, a $K$--algebra homomorphism from the enveloping algebra of $A$ over $K$ to $A$.
We denote $\tOmega^{1}_{A}=\tOmega^{1}_{A/K}$ the kernel of the multiplication map and note 
that this ideal in $A^{\ev}$ represents the functor of taking $K$--linear derivations into arbitrary 
$A$--bimodules --- in contrast to the $A$--module of {\em K$\ddot{a}$hler differential forms\/} 
$\Omega^{1}_{A/K}= \tOmega^{1}_{A}/(\tOmega^{1}_{A})^{2}$ 
that represents the  functor of taking $K$--linear derivations in {\em symmetric\/} $A$--bimodules.
We use the convention that the universal derivation $d:A\to \tOmega^{1}_{A}$ sends 
$a\mapsto da = 1\otimes a - a\otimes 1$.
\end{sit}

Assume now that $A$ is presented as $(\Sym_{K}F)/I$, for some free $K$--module $F$ 
and some ideal $I$ in the polynomial ring  $P=\Sym_{K}F$ over this free module. 

The following result holds for an arbitrary presentation $A=P/I$, with neither $P$ nor $A$ necessarily
commutative.
\begin{lemma}[cf. {\cite[Cor.2.11]{CuQui}}]
\label{ZJsequence}
There exists an exact sequence of $A^{\ev}$--modules
\begin{align*}
\label{cotangent}
\tag{$\dagger$}
\xymatrix{
I/I^{2}\ar[r]^-{d}&A\otimes_{P}\tOmega^{1}_{P}\otimes_{P}A\ar[r]^-{j}&A^{\ev}\ar[r]^{\mu}&
A\ar[r]&0
}
\end{align*}
where $d$ is induced by restricting the universal $K$--linear derivation $d:P\to \tOmega^{1}_{P}$
to $I$ and $j$ is obtained from the inclusion $i\colon \tOmega^{1}_{P}\subseteq P\otimes_{K} P$
as $j\cong A\otimes_{P}i\otimes_{P}A$ taking into account $A^{\ev}\cong A\otimes_{P}(P\otimes_{K}P)\otimes_{P}A$.\qed
\end{lemma}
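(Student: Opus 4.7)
The plan is to derive the sequence by combining (i) the defining short exact sequence $0 \to \tOmega^{1}_A \to A^{\ev} \xto{\mu} A \to 0$ with (ii) an identification of $\tOmega^{1}_A$ as the cokernel of $d\colon I/I^{2} \to A\otimes_{P}\tOmega^{1}_{P}\otimes_{P}A$, then concatenating the two to produce the four-term exact sequence of the lemma.

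First I would check that the given sequence is a complex. The map $d$ is well defined on $I/I^{2}$ because, by the Leibniz rule for the universal derivation $d\colon P\to\tOmega^{1}_{P}$ combined with the fact that elements of $I$ act as zero on either side of $A\otimes_{P}\tOmega^{1}_{P}\otimes_{P}A$, one computes $1\otimes d(fg)\otimes 1 = \bar f\otimes dg\otimes 1 + 1\otimes df\otimes\bar g = 0$ for $f,g\in I$. Moreover $j\circ d=0$ is immediate since $j(1\otimes df\otimes 1) = 1\otimes\bar f - \bar f\otimes 1 = 0$ when $f\in I$. Exactness at $A^{\ev}$ and at $A$ then follows by applying the right exact functor $A\otimes_{P}(-)\otimes_{P}A \cong A^{\ev}\otimes_{P^{\ev}}(-)$ to the fundamental sequence $0\to\tOmega^{1}_{P}\to P^{\ev}\xto{\mu_{P}} P\to 0$, producing the exact piece $A\otimes_{P}\tOmega^{1}_{P}\otimes_{P}A\xto{j} A^{\ev}\to A\to 0$, with $\Imm(j)=\tOmega^{1}_{A}$.

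The main step is exactness at the middle term. Write $N := A\otimes_{P}\tOmega^{1}_{P}\otimes_{P}A$ and let $\bar\jmath\colon N/\Imm(d)\onto\tOmega^{1}_{A}$ denote the induced surjection. I would construct an $A^{\ev}$-linear inverse $\psi$ to $\bar\jmath$ via the universal property of $\tOmega^{1}_{A}$. The assignment $a\mapsto\overline{1\otimes d\tilde a\otimes 1}\in N/\Imm(d)$, for any lift $\tilde a\in P$ of $a\in A$, is well defined because two lifts differ by an element $f\in I$ whose corresponding discrepancy $d(f)$ lies in $\Imm(d)$; it is furthermore a $K$-linear derivation into the $A$-bimodule $N/\Imm(d)$, since the Leibniz rule for $d\colon P\to\tOmega^{1}_{P}$ translates, upon reducing modulo $I$ on both tensor sides, into the Leibniz rule for $a\mapsto\overline{1\otimes d\tilde a\otimes 1}$. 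By the universal property of $\tOmega^{1}_{A}$ representing $K$-linear derivations into $A$-bimodules, this derivation factors through a unique $A^{\ev}$-linear map $\psi\colon\tOmega^{1}_{A}\to N/\Imm(d)$ with $\psi(da)=\overline{1\otimes d\tilde a\otimes 1}$. Both $\bar\jmath\circ\psi$ and $\psi\circ\bar\jmath$ act as the identity on the respective generating sets, hence everywhere by $A^{\ev}$-linearity, so $\bar\jmath$ is an isomorphism and $\ker(j)=\Imm(d)$.

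The main technical point will be the verification that $\psi$ is a well-defined derivation into $N/\Imm(d)$: this is precisely where the ambiguity in choosing lifts from $A$ to $P$ must be absorbed by $\Imm(d)$, which is the conceptual content of the sequence. No further hypotheses on $P$, $A$, or $K$ are needed beyond those implicit in the setup, so the argument applies in the stated generality of arbitrary associative $K$-algebras.
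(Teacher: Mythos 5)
Your argument is correct, and it supplies a proof where the paper gives none: Lemma \ref{ZJsequence} is stated with a reference to Cuntz--Quillen and a \emph{qed} sign, so there is no in-text proof to compare against. Your route is the standard one and works in exactly the stated generality (neither $P$ nor $A$ commutative): right-exactness of $A\otimes_{P}(-)\otimes_{P}A\cong A^{\ev}\otimes_{P^{\ev}}(-)$ applied to $0\to\tOmega^{1}_{P}\to P^{\ev}\to P\to 0$ settles exactness at $A^{\ev}$ and $A$ and identifies $\Imm(j)=\tOmega^{1}_{A}$, and the universal property of $\tOmega^{1}_{A}$ for $K$-linear derivations into arbitrary $A$-bimodules (which the paper itself records) inverts the induced surjection $N/\Imm(d)\to\tOmega^{1}_{A}$, noting that $N$ is generated over $A^{\ev}$ by the classes $1\otimes dp\otimes 1$. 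The one point you should make explicit is that $d\colon I/I^{2}\to N$ is a map of $A^{\ev}$-modules (equivalently, that $\Imm(d)$ is an $A$-sub-bimodule of $N$), since otherwise $N/\Imm(d)$ is not a bimodule and the universal property cannot be invoked; this is the same Leibniz computation you already use, e.g.\ $1\otimes d(pf)\otimes 1=1\otimes dp\otimes\bar f+\bar p\otimes df\otimes 1=\bar p\cdot(1\otimes df\otimes 1)$ for $p\in P$, $f\in I$, and symmetrically on the right. With that line added, the proof is complete and self-contained.
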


Returning to the commutative situation, choose a $P^{\ev}$--linear surjection 
$P\otimes_{K}G\otimes_{K}P\to I$  for some free $K$--module $G$. The preceding Lemma 
has the following immediate consequence.

\begin{cor}
\label{cor:tate}
One may construct a Tate model for the multiplication map $\mu:A\otimes_{K}A\to A$
with $F_{0}=0, F_{1}= A\otimes_{K}F\otimes_{K}A$, and $F_{2} = A\otimes_{K}G\otimes_{K}A$.
\end{cor}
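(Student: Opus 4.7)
The plan is to build the Tate model inductively along its Postnikov tower, making the asserted choices of $F_0$, $F_1$, and $F_2$ at the first three stages and continuing with higher $F_i$ only as needed. Since $\mu$ is surjective, one may take $F_0 = 0$, so that $\calt^{(0)} = S = A^{\ev}$ is already equipped with an augmentation onto $A$. The substantive part concerns stages $1$ and $2$, where Lemma \ref{ZJsequence} guides the choices of $F_1$ and $F_2$ together with their attaching maps.

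For stage $1$, I would first establish the identification $A \otimes_K F \otimes_K A \cong A \otimes_P \tOmega^1_P \otimes_P A$ of $A^{\ev}$--modules. By Yoneda it suffices to compute both $\Hom$--functors into an arbitrary $A$--bimodule $M$: the left hand side gives $\Hom_K(F, M)$, while the right hand side equals $\Hom_{P^{\ev}}(\tOmega^1_P, M) = \Der_K(P, M)$, which also equals $\Hom_K(F, M)$ because $P = \Sym_K F$ is polynomial on $F$. This exhibits $F_1 = A \otimes_K F \otimes_K A$ as a free $A^{\ev}$--module on a $K$--basis of $F$, and composing with the map $j$ of Lemma \ref{ZJsequence} supplies an attaching map $\partial|_{F_1} \colon F_1 \to A^{\ev}$ whose image is $\ker \mu = \tOmega^1_A$. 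Hence $H_0(\calt^{(1)}) = A$.

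For stage $2$ the task is to kill $H_1(\calt^{(1)})$, for which it is sufficient that the image of $\partial|_{F_2}$ in $F_1$ exhausts $\ker(\partial|_{F_1})$. By the exactness asserted in Lemma \ref{ZJsequence}, this kernel equals $d(I/I^2)$. Applying the right exact functor $A \otimes_P (-) \otimes_P A$ to the given $P^{\ev}$--linear surjection $P \otimes_K G \otimes_K P \twoheadrightarrow I$ yields a surjection $A \otimes_K G \otimes_K A \twoheadrightarrow I/I^2$, using the standard identification $A \otimes_P I \otimes_P A \cong I/I^2$. Setting $F_2 = A \otimes_K G \otimes_K A$, a free $A^{\ev}$--module on a $K$--basis of $G$, and taking $\partial|_{F_2}$ to be the composite $F_2 \twoheadrightarrow I/I^2 \xrightarrow{d} F_1$ produces an attaching map whose image is $d(I/I^2) = \ker(\partial|_{F_1})$, so $H_1(\calt^{(2)}) = 0$.

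Higher-degree generators $F_i$ for $i \geq 3$ can then be added by the standard Tate procedure to kill any remaining homology; the corollary places no constraints on their shape. The main technical point is the identification $A \otimes_P \tOmega^1_P \otimes_P A \cong A \otimes_K F \otimes_K A$; the mild subtlety is that $\tOmega^1_P$ is not itself free as a $P$--bimodule (the Leibniz relations among $d(pq) = p \cdot dq + dp \cdot q$ are nontrivial), but these relations trivialize after base-changing along $P \to A$ on both sides, yielding the free $A^{\ev}$--module on $F$ as desired.
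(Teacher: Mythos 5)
Your overall skeleton (take $F_0=0$, $F_1=A\otimes_K F\otimes_K A$, $F_2=A\otimes_K G\otimes_K A$, with attaching maps extracted from Lemma \ref{ZJsequence}) is the same as the paper's, but the ``main technical point'' you rely on is false: $A\otimes_P\tOmega^{1}_{P}\otimes_P A$ is \emph{not} isomorphic to $A\otimes_K F\otimes_K A$; there is only a surjection from the latter onto the former, and that is all the paper uses. Concretely, writing $u_a=x''_a-x'_a$, one has $P\otimes_K P\cong P[u_a;a]$ and $\tOmega^{1}_{P}=(u_a)_a$, so over $P^{\ev}$ this module is presented by the Koszul syzygies $u_ae_b-u_be_a$; these relations do not die after applying $A\otimes_P(-)\otimes_P A$, because the images $x''_a-x'_a$ generate $\ker\mu\neq 0$ in $A^{\ev}$. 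Hence the base-changed module is a proper, non-free quotient of $A\otimes_K F\otimes_K A$ whenever there are at least two variables (already for $A=P=K[x,y]$ it is the non-free ideal $(x''-x',y''-y')$). The flaw in your Yoneda argument is the step $\Der_K(P,M)=\Hom_K(F,M)$: for the \emph{commutative} ring $P$ and a non-symmetric $A$-bimodule $M$, the values of a derivation on the variables must satisfy the constraints $D(x_a)x_b-x_bD(x_a)=D(x_b)x_a-x_aD(x_b)$ forced by commutativity; free prescription of values on generators holds for the tensor algebra, or for symmetric bimodules, not here. Your closing remark that the relations ``trivialize after base-changing along $P\to A$'' is exactly what fails.

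This infects stage 2. Your criterion ``the image of $\partial|_{F_2}$ exhausts $\ker(\partial|_{F_1})$'' is both stronger than needed and unattainable: that kernel is the preimage of $\ker j=d(I/I^2)$ under the surjection $F_1\twoheadrightarrow A\otimes_P\tOmega^{1}_{P}\otimes_P A$, so it contains the Koszul syzygies on the $x''_a-x'_a$, which cannot come from $F_2$ (take $I=0$, $X=\{x,y\}$: then $F_2=0$ while the kernel is nonzero). What must be shown is $H_1(\calt^{(2)})=0$, and here the degree-two part $\bigwedge^2_{A^{\ev}}F_1$ of $\calt^{(1)}$ is essential: its Koszul boundaries are precisely the kernel of the surjection $F_1\to A\otimes_P\tOmega^{1}_{P}\otimes_P A$, whence $H_1(\calt^{(1)})\cong\ker j\cong d(I/I^2)$ by exactness of the sequence in Lemma \ref{ZJsequence}. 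One then maps $F_2\twoheadrightarrow I/I^2$ and onto $d(I/I^2)$ via $d$, and uses projectivity of $F_2$ to lift this into the $1$-cycles of $\calt^{(1)}$ to define the attaching map (your composite $F_2\to I/I^2\xrightarrow{d}F_1$ does not literally exist without the false identification). With these repairs the argument becomes the paper's proof; as written, the justification that $H_1(\calt^{(2)})=0$ is broken.
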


\begin{proof}
As the multiplication map is surjective, one may choose $F_{0}=0$ in the construction of the Tate model.

The image of the restriction of the universal derivation $d:P\to \tOmega^{1}_{P}$ to $F\subseteq P$ 
generates the ideal $\tOmega^{1}_{P}\subseteq P\otimes_{K}P$ and this shows that
the induced map $A\otimes_{K}F\otimes_{K}A\to A\otimes_{P}\tOmega^{1}_{P}\otimes_{P}A$
is surjective, thus, the choice of $F_{1}$. The resulting DG algebra $\calt^{(1)}=\bigwedge_{A^{\ev}}F_{1}$
is nothing but the Koszul complex over the composition 
$A\otimes_{K}F\otimes_{K}A\to A\otimes_{P}\tOmega^{1}_{P}\otimes_{P}A\xto{j} A^{\ev}$
whose first homology is isomorphic to $d(I/I^{2})$ --- this is exactness of  the sequence (\ref{cotangent}) 
above at the term $A\otimes_{P}\tOmega^{1}_{P}\otimes_{P}A$.

Applying on both sides the tensor product over $P$ with $A$ to the
$P^{\ev}$--linear surjection $P\otimes_{K}G\otimes_{K}P\to I$
yields an $A^{\ev}$--linear surjection first from $F_{2}$ onto $I/I^{2}$ and then via $d$ onto that homology. 
As $F_{2}$ is $A^{\ev}$--projective, one can find a lifting of that surjection into the $1$-cycles of $\calt^{(1)}$ 
and such lifting extends the differential from $\calt^{(1)}$ to 
\[
\calt^{(2)}={\bigwedge}_{A^{\ev}}(F_{1})\otimes_{A^{\ev}}\Gamma_{A^{\ev}}(F_{2})\,,
\]
whence the claim concerning $F_{2}$ follows.
\end{proof}

To be even more specific, Lemma \ref{ZJsequence} and the proof of the corollary indicate how to describe
the possible differentials on $\calt^{(2)}$.

\begin{sit}
\label{lem:taylor}
Let $\bx=\{x_{a}\}_{a}$ be a basis of the free $K$--module $F$. Then the universal derivation sends these
algebra generators to
\[
dx_{a} = x''_{a}-x'_{a}\in \tOmega^{1}_{P}\subseteq P\otimes_{K}P\cong 
\Sym_{K}(F\oplus F)\,,
\] 
where we abbreviate $x'=x\otimes 1, x''=1\otimes x$. Below, we will use the same notation for the respective
residue classes in $A^{\ev}$.

Representing a polynomial $f(\bx)=\sum'_{A=(a_{1},..., a_{k})}\alpha_{A}x_{a_{1}}\cdots x_{a_{k}}\in P$, 
with $\alpha_{A}\in K$, as a finite sum of  (ordered) monomials in (finitely many) basis elements $x_{a}$ of $F$,
the definition of $d$ and then the product rule for this derivation yield
\begin{align}
\label{df}
\tag{$*$}
df &= 1\otimes f -f\otimes 1 = f(\bx'')-f(\bx')\\
\notag
&=\sum_{A=(a_{1},..., a_{k})}\sum_{\kappa=1}^{k}
\alpha_{A}x'_{a_{1}}\cdots x'_{a_{\kappa -1}}dx_{a_{\kappa}} x''_{a_{\kappa+1}}\cdots x''_{a_{k}}\\
\notag
&=\sum_{A=(a_{1},..., a_{k})}\sum_{\kappa=1}^{k}\alpha_{A}
x'_{a_{1}}\cdots x'_{a_{\kappa -1}}(x''_{a_{\kappa}}-x'_{a_{\kappa}})x''_{a_{\kappa+1}}\cdots x''_{a_{k}}
\end{align}
in $\tOmega^{1}_{P}=(x''_{a}-x'_{a})_{a}\subseteq P\otimes_{K}P$.
\end{sit}

\begin{sit}
\label{difT2}
Slightly abusing notation, we may then base the free $A^{\ev}$--module $F_{1}$ in the construction of $\calt$ 
above on symbols $\diff x_{a}$ and define $\partial \diff x_{a} = x''_{a}-x'_{a}\in A^{\ev}$. Similarly, 
if $\{f_{b}\}_{b}$ are polynomials in $I$ whose classes generate $I/I^{2}$, we may base the free 
$A^{\ev}$--module $F_{2}$ on symbols $\diff f_{b}$ and choose
\begin{align}
\label{dfA}
\tag{$**$}
\partial(\diff f_{b}) = \sum_{A=(a_{1},..., a_{k})}\sum_{\kappa=1}^{k}
\alpha_{A}x'_{a_{1}}\cdots x'_{a_{\kappa -1}}\diff x_{a_{\kappa}}
x''_{a_{\kappa+1}}\cdots x''_{a_{k}}\ \in \ F_{1}
\end{align}
for the differential on those basis elements, where the right hand side results from a chosen presentation 
of $f=f_{b}$ as above.

This time, the right hand side will depend on the chosen presentation, but those choices are easily 
controlled, in that one may add
$\partial(\omega_{b})$ to the right hand side for any two-form $\omega_{b}\in \bigwedge^{2}_{A^{\ev}}F_{1}$
 to change the value of $\partial(\diff f_{b})$. These are also the only choices.

If we collect coefficients of the terms $dx_{a}$ in (\ref{df}) for $f=f_{b}$ to write
\begin{align*}
df_{b} &=\sum_{a}\Delta_{b,a}(\bx',\bx'')dx_{a}\,,
\end{align*}
with $\Delta_{b,a}(\bx',\bx'')$ in $P\otimes_{K}P$, then these coefficients satisfy
\begin{align*}
\mu\Delta_{b,a}(\bx',\bx'')&= \Delta_{b,a}(\bx,\bx)=\frac{\partial f_{b}(\bx)}{\partial x_{a}} 
\end{align*}
in $P$ irrespective of the representation we chose.
\end{sit}

\section{Diagonal Approximation and Graded Commutativity}
With $\calt$ a Tate model of the multiplication map $A^{\ev}\to A$ for a commutative $K$--algebra as before,
note that $\calt$ is naturally a complex of  $A$--bimodules so that we can form the tensor 
product $\calt\otimes_{A}\calt$. This tensor product carries again the structure of 
a differential strictly graded commutative algebra with divided powers.

Denote $j_{1,2}\colon\calt\to \calt\otimes_{A}\calt$ the DG algebra homomorphisms $j_{1}(t)=t\otimes_{A}1$,
respectively $j_{2}(t)=1\otimes_{A}t$, for $t\in\calt$.

\begin{sit}
The complex underlying $\calt\otimes_{A}\calt$ is one of free $A$--trimodules, that is, 
free modules over $A^{\otimes 3}$. We make no claim about the homology of this complex, except
to note that the algebra homomorphism $\epsilon\otimes_{A}\epsilon: \calt\otimes_{A}\calt\to A\otimes_{A}A\cong A$
is the natural map onto the zero$^{th}$ homology.

Note, however, that if $A$ is {\em flat\/} over $K$, then $A^{\ev}$ is $A$--flat via both $j_{1}$ or $j_{2}$,
and $\calt$ constitutes a flat resolution of $A$ over $A$ via either $j_{1}$ or $j_{2}$, thus, the 
homology of $\calt\otimes_{A}\calt$ is trivial, equal to $A$ in degree zero.

However, even without flatness we have the following.
\end{sit}

\begin{lemma}
\label{lem:hot}
With notation as introduced above,
\begin{enumerate}[\rm (1)]
\item
\label{epsilon1} The algebra homomorphisms 
\begin{align*}
\epsilon_{1}=\epsilon\otimes_{A}\calt&\colon \calt\otimes_{A}\calt\to A\otimes_{A}\calt\cong \calt\\
\epsilon_{2}=\calt\otimes_{A}\epsilon&\colon \calt\otimes_{A}\calt\to \calt\otimes_{A}A\cong \calt
\end{align*}
are homotopic as morphisms of complexes of $A$--trimodules.
\item 
\label{epsilon2}
If $H_{m}(\calt\otimes_{A}\calt)=0$ for $1\leq m \leq i$ for some $i$, then $\Ker\epsilon_{1}\cap\Ker \epsilon_{2}$
has vanishing homology up to and including degree $i$.

In particular, if $A$ is flat over $K$, then the subcomplex $\Ker\epsilon_{1}\cap\Ker \epsilon_{2}\subseteq \calt\otimes_{A}\calt$ has zero homology.
\end{enumerate}
\end{lemma}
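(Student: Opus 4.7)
The plan is to handle the two parts separately, with part (1) constructing a chain homotopy explicitly by induction along the Postnikov tower and part (2) following from part (1) together with two invocations of the long exact sequence in homology.

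For part (1), I want to build an $A^{\otimes 3}$-linear map $s\colon \calt\otimes_{A}\calt\to \calt$ of degree $+1$ with $\partial s+s\partial=\epsilon_{2}-\epsilon_{1}$. In degree zero we have $(\calt\otimes_{A}\calt)_{0}\cong A^{\otimes 3}$ and $(\epsilon_{2}-\epsilon_{1})(a\otimes b\otimes c)=a\otimes bc-ab\otimes c\in A^{\ev}$. Using the Taylor expansion from \ref{lem:taylor} formula \eqref{df}, I would define $s_{0}$ on a lift of $b$ to a monomial $x_{a_{1}}\cdots x_{a_{k}}\in P$ by
\[
s_{0}(a\otimes b\otimes c)=(a\otimes c)\sum_{\kappa=1}^{k}x'_{a_{1}}\cdots x'_{a_{\kappa-1}}\diff x_{a_{\kappa}}x''_{a_{\kappa+1}}\cdots x''_{a_{k}},
\]
extend $K$-linearly, and verify that this descends to $A^{\otimes 3}$ using that the ambiguity, which lives in the image of $I\subset P$, is exactly killed by the attaching map $\partial\colon F_{2}\to F_{1}$ from \eqref{dfA}. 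Applying $\partial$ to the right-hand side and using $\partial\diff x_{a}=x''_{a}-x'_{a}$ telescopes to $a\otimes bc-ab\otimes c$, establishing the base case.

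For the inductive step, suppose $s_{0},\ldots,s_{n-1}$ are constructed and satisfying the homotopy relation in degrees $<n$. The map $\phi_{n}:=(\epsilon_{2}-\epsilon_{1})-s_{n-1}\partial\colon (\calt\otimes_{A}\calt)_{n}\to \calt_{n}$ is a trimodule map, and a direct computation using the induction hypothesis and $\partial^{2}=0$ shows that $\partial\circ\phi_{n}=0$, so $\phi_{n}$ takes values in $Z_{n}(\calt)$. Since $\pi\colon\calt\to A$ is a quasi-isomorphism, $Z_{n}(\calt)=B_{n}(\calt)=\partial(\calt_{n+1})$ for $n\geqslant 1$. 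Because $(\calt\otimes_{A}\calt)_{n}$ is a free $A^{\otimes 3}$-module, the surjection $\calt_{n+1}\onto B_{n}(\calt)$ admits a trimodule-linear section on $\phi_{n}$, producing $s_{n}$.

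For part (2), the section $\iota_{2}\colon\calt\to\calt\otimes_{A}\calt$, $t\mapsto 1\otimes t$, of $\epsilon_{1}$ gives a short exact sequence of complexes
\[
0\to \Ker\epsilon_{1}\to \calt\otimes_{A}\calt\xto{\epsilon_{1}}\calt\to 0.
\]
Since $H_{m}(\calt)=0$ for $m\geqslant 1$, the long exact sequence yields $H_{m}(\Ker\epsilon_{1})\cong H_{m}(\calt\otimes_{A}\calt)$ for $m\geqslant 1$. Next, the augmentation ideal $\Ker\epsilon\subset\calt$ is acyclic, as follows from $0\to\Ker\epsilon\to\calt\to A\to 0$ together with the quasi-isomorphism $\calt\to A$. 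I would then verify that $\epsilon_{2}|_{\Ker\epsilon_{1}}\colon\Ker\epsilon_{1}\to\Ker\epsilon$ is surjective: for $t'\in\Ker\epsilon$, the element $\iota_{1}(t')=t'\otimes 1$ lies in $\Ker\epsilon_{1}$ and satisfies $\epsilon_{2}(\iota_{1}(t'))=t'$. This gives a second short exact sequence
\[
0\to \Ker\epsilon_{1}\cap\Ker\epsilon_{2}\to \Ker\epsilon_{1}\xto{\ \epsilon_{2}\ } \Ker\epsilon\to 0,
\]
and the long exact sequence combined with acyclicity of $\Ker\epsilon$ gives $H_{m}(\Ker\epsilon_{1}\cap\Ker\epsilon_{2})\cong H_{m}(\Ker\epsilon_{1})$ for all $m$. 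Chaining the two isomorphisms yields vanishing in degrees $1\leqslant m\leqslant i$ under the hypothesis of (2); the flat case then follows because the whole complex $\calt\otimes_{A}\calt$ computes $\Tor^{A}(A,A)=A$, concentrated in degree zero.

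The main obstacle is the base-case construction of $s_{0}$: choosing it to be genuinely $A$-trimodule linear (not just $K$-linear after choosing a splitting $A\to P$), and showing that the ambiguity in lifting $b\in A$ to a polynomial in $P$ is controlled precisely by the generators of $F_{2}$ in the Tate construction. Once well-definedness at this level is established, the rest of the argument is a routine application of the standard lifting technology.
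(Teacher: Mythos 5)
Your part (2) is correct and is in substance the paper's own argument: the paper assembles the same information into a three--by--three diagram whose bottom row is $0\to\Ker\epsilon_{1}\cap\Ker\epsilon_{2}\to\Ker\epsilon_{2}\to\Ker\epsilon\to 0$, whereas you use the symmetric row built from $\Ker\epsilon_{1}$ together with $0\to\Ker\epsilon_{1}\to\calt\otimes_{A}\calt\xto{\epsilon_{1}}\calt\to 0$; the two long exact sequences, the acyclicity of $\Ker\epsilon$, the identity $\epsilon\epsilon_{1}=\epsilon\epsilon_{2}$, and the section $t'\mapsto t'\otimes 1$ are exactly the ingredients used there, and your flat-case conclusion matches the remark preceding the lemma.

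For part (1) there is a genuine gap, and it sits exactly where you flag it. Your $s_{0}$ is not well defined: if $\tilde b$ and $\tilde b+f$ with $f\in I$ are two polynomial lifts of $b$, the two values of your formula differ by $(a\otimes c)\,\partial(\diff f)$ in the notation of \ref{difT2}, which is a boundary in $\calt_{1}$ but is \emph{not} zero, so the expression does not descend to $A^{\otimes 3}$; the ambiguity is not ``killed'' by the attaching map, it merely becomes a boundary, which is of no help when one needs an actual map. (Even after fixing a $K$--linear splitting of $P\to A$, the formula would still not be linear over the middle copy of $A$.) The repair is to drop the explicit formula altogether: since $\epsilon\epsilon_{1}=\epsilon\epsilon_{2}$, the difference $\epsilon_{2}-\epsilon_{1}$ takes values in $\Ker\epsilon$, which is exact because $\epsilon$ is a quasiisomorphism; in particular, in degree zero its image lies in $\ker\mu=\partial\calt_{1}$, so $s_{0}$ exists by lifting through the surjection $\calt_{1}\onto\partial\calt_{1}$, using that $(\calt\otimes_{A}\calt)_{0}$ is free --- the very same lifting you perform in the inductive step, which is otherwise fine. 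With that change your induction is just the standard proof that a chain map from a nonnegatively graded complex of free modules to an exact complex is null-homotopic, which is precisely the one-line argument the paper gives for (1).
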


\begin{proof}
As $\epsilon\epsilon_{1}=\epsilon\epsilon_{2}$,
the difference $\epsilon_{2}-\epsilon_{1}$ takes its values in the acyclic complex 
$\Ker(\epsilon\colon\calt\to A)$. As its source is a complex of projective, even free $A$--trimodules, 
the claim (\ref{epsilon1}) follows.

Concerning (\ref{epsilon2}), consider the following commutative diagram of complexes with exact rows and columns
\begin{align*}
\xymatrix{
&0&0&0\\
0\ar[r]&\Ker\epsilon\ar[r]\ar[u]&\calt\ar[r]^-{\epsilon}\ar[u]&A\ar[u]\ar[r]&0\\
0\ar[r]&\Ker\epsilon_{1}\ar[r]\ar[u]&\calt\otimes_{A}\calt\ar[r]^-{\epsilon_{1}}\ar[u]_-{\epsilon_{2}}&
\calt\ar[r]\ar[u]_-{\epsilon}&0\\
0\ar[r]&\Ker\epsilon_{1}\cap\Ker\epsilon_{2}\ar[r]\ar[u]&\Ker\epsilon_{2}\ar[r]\ar[u]&
\Ker\epsilon\ar[r]\ar[u]&0\\
&0\ar[u]&0\ar[u]&0\ar[u]
}
\end{align*}
By construction, $\epsilon$ is a quasi isomorphism, whence $\Ker\epsilon$ is acyclic.
By assumption $\epsilon_{1}, \epsilon_{2}$ are quasiisomorphisms in degrees up to
and including $i$, where we may take $i=\infty$ if $A$ is flat over $K$. The long exact homology sequence
obtained from the short exact sequence, say, at the bottom yields then the claims.
\end{proof}

\begin{defn}[cf.~\cite{San}]
Let $\cala$ be a complex of $A^{\ev}$--modules that comes with an augmentation 
$\cala\xto{\epsilon} A$. A {\em co-unital diagonal approximation\/} on $\cala$ is a morphism 
$\Phi:\cala\to \cala\otimes_{A}\cala$ of $A^{\ev}$--complexes 
so that the following diagram commutes,
\begin{align*}
\xymatrix{
&\cala\ar[d]^-{\Phi}\ar@{=}[ld]\ar@{=}[rd]\\
\cala&\cala\otimes_{A}\cala\ar[l]^-{\epsilon\otimes\cala}\ar[r]_-{\cala\otimes \epsilon}&\cala\,,
}
\end{align*}
where we have identified $\cala\otimes_{A}A\cong\cala\cong  A\otimes_{A}\cala$.
\end{defn}

As is well-known, the existence of a co-unital diagonal approximation on a Tate model 
of the multiplication map implies graded commutativity of the Yoneda product on 
$\Ext_{A^{\ev}}(A,A)$; see, for example, \cite{BFl, San, SA}.

\begin{defn}
Given a co-unital diagonal approximation $\Phi:\calt\to \calt\otimes_{A}\calt$ on the Tate model of
$\mu:A^{\ev}\to A$, define a {\em cup product\/} $\cup = \cup_{\Phi}$ on $\Hom^{\bdot}_{A^{\ev}}(\calt, A)$ through
\begin{align*}
f\cup g = \mu( f\otimes_{A} g)\Phi\colon 
\calt\xto{\ \Phi\ }  \calt\otimes_{A}\calt \xto{\ f\otimes g\ }  A\otimes_{A}A\xto[\cong]{\ \mu\ } A\,,
\end{align*}
for cochains $f,g:\calt\to A$. 
\end{defn}

\begin{proposition}
Let $A$ be a commutative $K$--algebra with a Tate model of $\mu:A^{\ev}\to A$ that 
admits a co-unital diagonal approximation. The corresponding cup product then induces a 
graded commutative product
in $H(\Hom^{\bdot}_{A^{\ev}}(\calt, A)) =\Ext^{\bdot}_{A^{\ev}}(A,A)$. This product coincides with 
the usual Yoneda or composition product on the $\Ext$--algebra.
\end{proposition}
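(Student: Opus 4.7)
The plan is to treat both claims in tandem by building, for each cocycle $f:\calt\to A$, an explicit chain-map lift $\bar f:\calt\to\calt$ of degree $-|f|$ out of $\Phi$, and then performing formal Koszul-sign comparisons.

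\emph{Coincidence with the Yoneda product.} Given a cocycle $f$ of degree $p$, set $\bar f := (f\otimes_{A}\calt)\circ\Phi$, an $A^{\ev}$-linear map of degree $-p$. The counit axiom $(\calt\otimes_{A}\epsilon)\circ\Phi=\id_{\calt}$, restricted to degree $p$, yields $\epsilon\circ\bar f=f$, so $\bar f$ lifts $f$ across the augmentation $\epsilon:\calt\to A$. Since $\Phi$ is a chain map of $A^{\ev}$-modules and $f\partial=0$, a direct Koszul-sign computation shows that $\bar f$ is itself a chain map of degree $-p$; it therefore represents $[f]$ in the Yoneda picture. For a second cocycle $g$ of degree $q$ and $t\in\calt_{p+q}$, only the bidegree-$(p,q)$ component of $\Phi(t)$ contributes to $(f\cup g)(t)=\mu(f\otimes g)\Phi(t)$; applying left $A$-linearity of $g$ and $\mu(a\otimes b)=ab$ gives $(f\cup g)(t)=g(\bar f(t))$. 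But $g\circ\bar f$ is precisely a cocycle representative of the Yoneda composite of $[g]$ and $[f]$ (with the usual sign convention), so $\cup_{\Phi}$ coincides with the Yoneda product on $\Ext^{\bdot}_{A^{\ev}}(A,A)$.

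\emph{Graded commutativity.} Let $\tau:\calt\otimes_{A}\calt\to\calt\otimes_{A}\calt$ be the Koszul twist $\tau(x\otimes y)=(-1)^{|x||y|}y\otimes x$; this is a well-defined $A^{\ev}$-linear chain endomorphism precisely because $\calt$ is strictly graded commutative. Using that $A$ is commutative, a Koszul-sign computation gives the cochain-level identity
\begin{align*}
\mu\circ(f\otimes g)\circ\tau \;=\;(-1)^{|f||g|}\,\mu\circ(g\otimes f)
\end{align*}
on $\calt\otimes_{A}\calt$, and hence $\mu(f\otimes g)\circ(\tau\Phi)=(-1)^{|f||g|}(g\cup f)$. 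Moreover, the identities $\epsilon_{1}\tau=\epsilon_{2}$ and $\epsilon_{2}\tau=\epsilon_{1}$ show that $\tau\Phi$ is again a co-unital diagonal approximation. Therefore $\Psi:=\Phi-\tau\Phi$ is a chain map of $A^{\ev}$-complexes $\calt\to\Ker\epsilon_{1}\cap\Ker\epsilon_{2}$. By Lemma~\ref{lem:hot}(\ref{epsilon2}), the target subcomplex is acyclic under the (mild) vanishing hypothesis of that lemma; since $\calt$ is a complex of free $A^{\ev}$-modules, the usual obstruction-theoretic induction then produces an $A^{\ev}$-linear chain null-homotopy of $\Psi$, i.e.\ $\Phi\simeq\tau\Phi$. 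Passing to cohomology gives $[f\cup g]=(-1)^{|f||g|}[g\cup f]$.

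\emph{The main obstacle.} The only nonformal step is the acyclicity of $\Ker\epsilon_{1}\cap\Ker\epsilon_{2}$ required to construct the homotopy; this is exactly the content of Lemma~\ref{lem:hot}(\ref{epsilon2}), applicable whenever $H_{m}(\calt\otimes_{A}\calt)$ vanishes in positive degrees (e.g.\ when $A$ is $K$-flat). Once the homotopy is in hand, both conclusions are mechanical sign bookkeeping.
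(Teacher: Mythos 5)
Your first half (cup product $=$ Yoneda product) is essentially sound: building the lift $\bar f=(f\otimes_{A}\calt)\circ\Phi$ directly from $\Phi$ and checking $\epsilon\circ\bar f=f$, $(f\cup g)=\pm\,g\circ\bar f$ is a legitimate variant of the paper's argument (the paper instead lifts $g$ to $\tilde g:\calt\to\calt$ and compares $f\cup g$ with $f\circ\tilde g$ using the homotopy $\epsilon_{1}\simeq\epsilon_{2}$ of Lemma \ref{lem:hot}(\ref{epsilon1})); up to the usual order/sign bookkeeping this part is fine.

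The graded commutativity half, however, has a genuine gap. Your Koszul twist $\tau(x\otimes y)=(-1)^{|x||y|}y\otimes x$ is not well defined on $\calt\otimes_{A}\calt$: the middle tensor identifies the \emph{right} $A$--action on the first factor (multiplication by $1\otimes a\in A^{\ev}$) with the \emph{left} $A$--action on the second factor (multiplication by $a\otimes 1$), and $\calt$ is not a symmetric $A$--bimodule --- these two actions are genuinely different (already in degree zero, $\calt_{0}\otimes_{A}\calt_{0}\cong A^{\otimes 3}$, and the naive swap sends the two representatives $(1\otimes a)\otimes(1\otimes 1)=(1\otimes1)\otimes(a\otimes1)$ to $1\otimes1\otimes a$ and $a\otimes1\otimes1$ respectively). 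Strict graded commutativity of the algebra $\calt$ is irrelevant here; it does not identify the two $A$--structures. For the same reason the identity $\epsilon_{1}\tau=\epsilon_{2}$ fails: $\epsilon_{1}$ lands in $\calt$ via the left structure, $\epsilon_{2}$ via the right one. So $\tau\Phi$, and hence $\Psi=\Phi-\tau\Phi$, do not exist as stated. Moreover, even if you repaired the flip (say by an involution of $\calt$ covering the exchange of the two factors of $A^{\ev}$), your null-homotopy argument invokes Lemma \ref{lem:hot}(\ref{epsilon2}), which requires $H_{m}(\calt\otimes_{A}\calt)=0$ in the relevant degrees --- e.g.\ $A$ flat over $K$ --- a hypothesis the proposition does not contain, and which the paper takes pains to avoid at this point. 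The paper's route needs neither ingredient: with lifts $\tilde f,\tilde g\in\End_{A^{\ev}}(\calt)$ of the cocycles one has the interchange identity $(\tilde f\otimes \id)(\id\otimes \tilde g)=\tilde f\otimes\tilde g=(-1)^{|f||g|}(\id\otimes\tilde g)(\tilde f\otimes \id)$ on the nose, and the only homotopical input is the unconditional statement Lemma \ref{lem:hot}(\ref{epsilon1}) that $\epsilon_{1}\simeq\epsilon_{2}$; comparing the two resulting diagrams yields $[f\cup g]=(-1)^{|f||g|}[g\cup f]$ with no flatness or acyclicity assumption. You should replace your twist-and-homotopy argument by this interchange argument (or restrict your claim to the flat case, which would prove less than the stated proposition).
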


\begin{proof}
Given cocycles $f,g\in \Hom^{\bdot}_{A^{\ev}}(\calt,A)$, lift them to morphisms of complexes
$\tilde f, \tilde g\in \Hom^{\bdot}_{A^{\ev}}(\calt,\calt)$. In the homotopy category of complexes 
$\epsilon_{1},\epsilon_{2}:\calt\otimes_{A}\calt \to \calt$ agree because of Lemma \ref{lem:hot}(\ref{epsilon1}),
and so the following diagram of complexes of $A^{\ev}$--modules commutes in the homotopy category,
\begin{align*}
\xymatrix{
\calt\ar[r]^-{\Phi}\ar@{=}[dr]&\calt\otimes_{A}\calt \ar[r]^{\id\otimes \tilde g}\ar[d]_{\epsilon_{1}}&
\calt\otimes_{A}\calt\ar[d]_{\epsilon_{1}=}^{\epsilon_{2}}\ar[r]^{\tilde f\otimes \id}&
\calt\otimes_{A}\calt\ar[d]^{\epsilon_{2}}
\ar[r]^{\epsilon_{1}\otimes_{A}\epsilon_{2}}&A\otimes_{A}A\ar[dd]_{\cong}^{\mu}\\
&\calt\ar[rrrd]_{f\circ \tilde g}\ar[r]^{\tilde g}&\calt\ar[r]^{\tilde f}\ar[rrd]^{f}&\calt\ar[rd]^-{\epsilon}\\
&&&&A
}
\end{align*}
By definition, $f\circ \tilde g$ is a cocycle representing the Yoneda product of $f$ with $g$, 
while the composition across the top and down yields $f\cup g$. Thus, these two products 
coincide in $\Ext_{A^{\ev}}(A,A)$.

Further, for $\tilde f\in \End^{|f|}_{A^{\ev}}(\calt), \tilde g\in \End^{|g|}_{A^{\ev}}(\calt)$ lifts of homogeneous cocycles 
as above, one has
\[
(\tilde f\otimes \id)(\id\otimes \tilde g)=\tilde f\otimes\tilde g = (-1)^{|f||g|}(\id\otimes\tilde g)(\tilde f\otimes \id)
\]
whence combining the above diagram with that corresponding to $(\id\otimes \tilde g)(\tilde f\otimes \id)$ 
yields graded 
commutativity of the Yoneda or cup product in cohomology, that is, in $\Ext^{\bdot}_{A^{\ev}}(A,A)$.
\end{proof}

\begin{remark}
The preceding proof shows that a partial co-unital diagonal approximation 
$\Phi^{(i)}:\calt^{(i)}\to \calt^{(i)}\otimes_{A}\calt^{(i)}$ is enough to express the Yoneda product of two
cohomology classes whose degrees add up to at most $i$ through the cup product on $\Hom^{\bdot}_{A^{\ev}}
(\calt^{(i)},A)$. Moreover, these classes will commute in the graded sense with respect to the product.
\end{remark}

In the particular case that $A$ is flat over $K$, one can indeed construct a co-unital diagonal 
approximation that is furthermore a homomorphism of DG algebras with divided powers.
\begin{proposition}
\label{prop:diag}
If $A$ is flat over $K$, then there exists a co-unital diagonal approximation $\Phi$ on the Tate model $\calt$ 
of $\mu:A^{\ev}\to A$ that induces such approximation $\Phi^{(i)}$ on each DG algebra $\calt^{(i)}$ in the 
Postnikov tower.
\end{proposition}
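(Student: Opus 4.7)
The plan is to construct $\Phi$ inductively along the Postnikov tower, producing a compatible sequence of morphisms $\Phi^{(i)}\colon \calt^{(i)}\to \calt^{(i)}\otimes_{A}\calt^{(i)}$ of strictly graded commutative DG algebras with divided powers, each co-unital with respect to $\epsilon$, and then setting $\Phi=\bigcup_{i}\Phi^{(i)}$. For the base case $i=0$, where $F_{0}=0$ and so $\calt^{(0)}=A^{\ev}$, take $\Phi^{(0)}$ to be the unique $A^{\ev}$-algebra homomorphism sending $a\otimes b\mapsto (a\otimes 1)\otimes_{A}(1\otimes b)$; co-unitality is immediate.

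For the inductive step, suppose $\Phi^{(i)}$ has been constructed. Since $\calt^{(i+1)}$ is obtained from $\calt^{(i)}$ by freely adjoining the $A^{\ev}$-basis of $F_{i+1}$ as generators (exteriorly if $i+1$ is odd, with divided powers if $i+1\geqslant 2$ is even), it suffices to define $\Phi$ on each basis element $f\in F_{i+1}$ and then extend by algebra maps with divided powers. The natural ansatz is
\[
\Phi(f) \;=\; f\otimes 1 + 1\otimes f + \psi(f),
\]
with correction term $\psi(f)\in(\calt^{(i+1)}\otimes_{A}\calt^{(i+1)})_{i+1}$ to be chosen. The co-unital requirement $(\epsilon\otimes\id)\Phi(f)=f=(\id\otimes\epsilon)\Phi(f)$ forces $\psi(f)\in(\Ker\epsilon_{1}\cap\Ker\epsilon_{2})_{i+1}$, while the chain-map requirement $d\Phi(f)=\Phi^{(i)}(\partial f)$ unwinds to
\[
d\,\psi(f) \;=\; \Phi^{(i)}(\partial f) - \partial f\otimes 1 - 1\otimes\partial f,
\]
whose right-hand side is a cycle in $(\Ker\epsilon_{1}\cap\Ker\epsilon_{2})_{i}$ by the inductive hypotheses (chain map + co-unital) on $\Phi^{(i)}$. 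This is where the flatness of $A$ over $K$ enters decisively: by Lemma \ref{lem:hot}(\ref{epsilon2}) the complex $\Ker\epsilon_{1}\cap\Ker\epsilon_{2}$ is then acyclic, so the desired $\psi(f)$ exists. Note that for degree reasons $(\calt\otimes_{A}\calt)_{i+1}$ automatically sits inside $\calt^{(i+1)}\otimes_{A}\calt^{(i+1)}$, so the image lands in the intended target.

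Having fixed $\Phi$ on $F_{i+1}$, extend it to all of $\calt^{(i+1)}$ as the unique morphism of strictly graded commutative DG algebras with divided powers over $\Phi^{(i)}$, given by the universal property of free adjunction. The Leibniz rule, together with the identity $\partial(x^{(n)})=(\partial x)\,x^{(n-1)}$ for divided powers, propagates the chain-map property from generators to all of $\calt^{(i+1)}$; co-unitality propagates in parallel because $\epsilon\otimes\epsilon$ is an algebra map. The main obstacle is precisely the acyclicity invocation---the step where a cycle in $\Ker\epsilon_{1}\cap\Ker\epsilon_{2}$ is lifted through the differential---which is exactly what flatness of $A$ over $K$ buys via Lemma \ref{lem:hot}; everything else is routine bookkeeping in the DG-algebra-with-divided-powers formalism.
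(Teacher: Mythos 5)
Your proposal is correct and follows essentially the same route as the paper: the same base case $\Phi^{(0)}(a\otimes b)=a\otimes 1\otimes b$, the same inductive ansatz $j_{1}+j_{2}$ plus a correction term in $\Ker\epsilon_{1}\cap\Ker\epsilon_{2}$ found by lifting a cycle through the differential using the acyclicity supplied by flatness via Lemma \ref{lem:hot}(\ref{epsilon2}), and the same extension to $\calt^{(i+1)}$ as a homomorphism of DG algebras with divided powers.
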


\begin{proof}
We construct a desired co-unital diagonal approximation $\Phi$ inductively along the Postnikov tower.
At the basis of the Postnikov tower, we have $\calt^{(0)}= \calt_{0}= A^{\ev}$ and
$\calt^{(0)}\otimes_{A}\calt^{(0)} = (\calt\otimes_{A}\calt)_{0}\cong A^{\otimes 3}\subseteq \calt\otimes_{A}\calt$.
Now
\begin{align*}
\Phi^{(0)}&:\calt^{(0)}\cong A^{\otimes 2}\to \calt^{(0)}\otimes_{A}\calt^{(0)}\cong A^{\otimes 3}\,,\\
\Phi^{(0)}&(a\otimes b)=a\otimes 1\otimes b
\end{align*}
defines a homomorphism of algebras such that 
$\epsilon_{1}\Phi^{(0)}= \id_{A^{\otimes 2}}= \epsilon_{2}\Phi^{(0)}\,.$
Via $\Phi^{(0)}$, we can, and will, view $\calt\otimes_{A}\calt$ as a DG $A^{\ev}$--algebra.
 
Now assume a co-unital diagonal approximation $\Phi^{(i)}:\calt^{(i)}\to \calt^{(i)}\otimes_{A} \calt^{(i)}$
has been defined for some $i\geqslant 0$. With $\calt^{(i+1)} =\calt^{(i)}\langle F_{i+1}\rangle$ for some
free $A^{\ev}$--module $F_{i+1}$, note that 
$(j_{1} + j_{2})|_{F_{i+1}}: F_{i+1}\to \calt^{(i+1)}\otimes_{A}\calt^{(i+1)}$
satisfies 
\[
\epsilon_{1}(j_{1} + j_{2})|_{F_{i+1}}=\id_{F_{i+1}}=\epsilon_{2}(j_{1} + j_{2})|_{F_{i+1}}\,.
\] 
For $m=1,2$, by the  induction hypothesis $\epsilon_{m}\Phi^{(i)}=\id_{\calt^{(i)}}$. As
$\epsilon_{m}$ is a morphism of complexes, it follows that
\[
\epsilon_{m}(\Phi^{(i)}\partial-\partial(j_{1}+j_{2}))=\partial-\partial\epsilon_{m}(j_{1}+j_{2})\,,
\]
vanishes on $F_{i+1}$. Because $\Phi^{(i)}$ is a morphism of complexes,
\begin{align*}
\partial(\Phi^{(i)}\partial-\partial(j_{1}+j_{2})) = 0\,,
\end{align*}
whence it follows that $\Phi^{(i)}\partial-\partial(j_{1}+j_{2})$ maps $F_{i+1}$
into the cycles of $\Ker\epsilon_{1}\cap\Ker\epsilon_{2}$. As the latter complex is acyclic for $A$ flat over 
$K$ by Lemma \ref{lem:hot}(\ref{epsilon2}), and as $F_{i+1}$ is projective, even free, we can find 
$\Phi'_{i+1}:F_{i+1}\to \Ker\epsilon_{1}\cap\Ker\epsilon_{2}$
such that $\Phi^{(i)}\partial = \partial(\Phi'_{i+1}+ j_{1}+j_{2})$ on $F_{i+1}$.

The algebra homomorphism given by $\Phi^{(i)}$ on $\calt^{(i)}$ and extended through
\begin{align*}
\Phi_{i+1} =\Phi'_{i+1} + j_{1}+ j_{2}
\end{align*}
first to $F_{i+1}$ and, then as algebra homomorphism to $\Phi^{(i+1)}$ on $\calt^{(i+1)}$ 
satisfies $\partial\Phi^{(i+1)}=\Phi^{(i+1)}\partial$ and $\epsilon_{m}\Phi^{(i+1)}=\id$.
In other words, we have found an extension of $\Phi^{(i)}$ to a co-unital diagonal approximation on $\calt^{(i+1)}$,
completing the inductive argument.
\end{proof}

\section{Products in Low Degree}
\begin{sit} We can sharpen Proposition \ref{prop:diag} a bit in low degrees in that we do not 
need any flatness there.

Proceding as in the proof of Proposition \ref{prop:diag}, define 
$\Phi_{1}=( j_{1}+j_{2})|_{F_{1}}:F_{1}\to \calt^{(1)}\otimes_{A}\calt^{(1)}$ and 
extend $\Phi^{(0)}, \Phi_{1}$ to a homomorphism  
$\Phi^{(1)}\colon \calt^{(1)}\to \calt^{(1)}\otimes_{A}\calt^{(1)}$ of strictly commutative graded 
algebras with divided powers. It will indeed be a co-unital approximation and homomorphism 
of DG algebras, as $\Phi^{(0)}$ is such a homomorphism and
\begin{align*}
\partial \Phi_{1}(\diff x_{a}) &= \partial( j_{1}+j_{2})(\diff x_{a}) =( j_{1}+j_{2}) \partial(\diff x_{a}) \\
&=  j_{1}(1\otimes x_{a}-x_{a}\otimes 1) + j_{2} (1\otimes x_{a}-x_{a}\otimes 1)\\
&= (1\otimes x_{a}\otimes 1 - x_{a}\otimes 1\otimes 1) +
(1\otimes 1\otimes x_{a}-1\otimes x_{a}\otimes 1)\\
&=1\otimes 1\otimes x_{a}-x_{a}\otimes 1\otimes 1=\Phi^{(0)}(\partial \diff x_{a})
\end{align*}
for any basis element $\diff x_{a}$ in $F_{1}$.

Put differently, a diagonal approximation $\Phi^{(1)}:\calt^{(1)}\to \calt^{(1)}\otimes_{A}\calt^{(1)}$ 
can be chosen in such a way that
\begin{align*}
\Phi^{(1)}(a\otimes b)&= a\otimes 1\otimes b &&\text{for $a\otimes b\in A^{\ev}$,}\\
\Phi^{(1)}(\diff x_{a})&= \diff x'_{a} + \diff x''_{a}&&\text{for $\diff x_{a}\in F_{1}$,}
\end{align*}
where we have shortened notation to $ \diff x'_{a}= \diff x_{a}\otimes 1=j_{1}(\diff x_{a})$ and 
$\diff x''_{a} = 1\otimes \diff x_{a}=j_{2}(\diff x_{a})$. 
We also use the algebra homomorphism $\Phi^{(0)}: A^{\ev}\to A^{\otimes 3}$ to identify
\begin{align*}
x' =\Phi^{(0)}(x') = x\otimes 1\otimes 1\,,\quad x = 1\otimes x\otimes 1\,,\quad x'' =\Phi^{(0)}(x'') = 1\otimes 1\otimes x\,.
\end{align*}
\end{sit}

\begin{sit}
\label{deg1}
It is noteworthy that the induced cup product on $\Hom^{\bdot}_{A^{\ev}}(\calt^{(1)},A)$ is always
{\em strictly\/} graded commutative. Indeed, recall that $\calt^{(1)}$ is nothing but the Koszul complex on 
$(x''_{a}-x'_{a})_{a}$ in $A^{\ev}$.
Dualizing into $A$, the induced differential becomes zero, and there is a quasi isomorphism
\begin{align*}
\Hom^{\bdot}_{A^{\ev}}(\calt^{(1)},A) &
\simeq \bigoplus_{i\geqslant 0}\Hom_{A^{\ev}}(A\otimes_{P}\Omega^{i}_{P/K}\otimes_{P}A, A) \cong 
\bigoplus_{i\geqslant 0}\Hom_{P}(\Omega^{i}_{P/K}, A) 
\end{align*}
to the graded module of alternating $K$--linear polyvector fields on $P$ with values in $A$.
If now $f\in \Hom^{|f|}_{A^{\ev}}(\calt^{(1)},A)$ is a cocycle of {\em odd degree}, then $f\cup f=0$. In fact, if $\omega\in 
\calt^{(1)}_{2|f|}$, then $\Phi^{(1)}(\omega)$, in Sweedler notation, is of the form
\begin{align*}
\Phi^{(1)}(\omega) =\sum (\omega_{(1)}\otimes_{A} \omega_{(2)} +
(-1)^{|\omega_{(1)}||\omega_{(2)}|}\omega_{(2)}\otimes_{A}\omega_{(1)})
\end{align*}
for suitable homogeneous $\omega_{(k)}\in \calt^{(1)}, k=1,2,$ whose degrees add up to $2|f|$, as the 
explicit form of $\Phi^{(1)}$ shows. 
Now $f\otimes f$ is zero on any summand with $|\omega_{(k)}|\neq |f|$, while on summands where  
 $|\omega_{(k)}|= |f|$ is odd, one has 
 \[
 (f\otimes f)(\omega_{(1)}\otimes_{A} \omega_{(2)} - \omega_{(2)}\otimes_{A}\omega_{(1)}) = 
 f(\omega_{(1)})f(\omega_{(2)})- f(\omega_{(2)})f(\omega_{(1)})=0\,,
 \]
 as $A$ is commutative.
\end{sit}

\begin{sit}
As just demonstrated, the construction of $\Phi^{(1)}$ does not require $A$ to be flat over $K$, 
and we now show by explicit construction that one can find also without any 
flatness assumption a co-unital diagonal approximation 
$\Phi^{(2)}$ on $\calt^{(2)}$ that extends the just constructed $\Phi^{(1)}$.

Recall that $\calt^{(2)}=\calt^{(1)}\langle F_{2}\rangle$, with $F_{2}$ a free $A^{\ev}$--module based on 
elements $\diff f_{b}$ that correspond to polynomials $f_{b}\in I\subseteq P$ whose classes in $I/I^{2}$ 
generate that $A^{\ev}$--module. 
An ``attaching map'' $\partial|_{F_{2}}:F_{2}\to \calt^{(1)}$ that killed the second homology of $\calt^{(1)}$
was obtained in \ref{difT2}(\ref{dfA}) from some representation of each $f_{b}$ as $K$--linear 
combination of monomials in the algebra generators of $P$ over $K$.

In those terms, and extending the shorthand notation to 
$\diff f'_{b} = \diff f_{b}\otimes 1 = j_{1}(\diff f_{b})$ and 
$\diff f''_{b} = 1\otimes \diff f_{b} = j_{2}(\diff f_{b})$, we now determine a diagonal approximation in 
degree $2$.
\end{sit}

\begin{proposition}
\label{deg2}
The algebra homomorphism $\Phi^{(2)}:\calt^{(2)}\to \calt^{(2)}\otimes_{A}\calt^{(2)}$ that extends 
$\Phi^{(1)}$ by means of 
\begin{align*}
\Phi_{2}(\diff f_{b}) &= \diff f'_{b}+\diff f''_{b}\\
&-\sum_{A=(a_{1},..., a_{k})}\sum_{1\leqslant \kappa<\lambda\leqslant k}
\alpha_{A}x'_{a_{1}}\cdots x'_{a_{\kappa -1}}\diff x'_{a_{\kappa}}
x_{a_{\kappa+1}}\cdots x_{a_{\lambda-1}}\diff x''_{a_{\lambda}}x''_{a_{\lambda+1}}\cdots x''_{a_{k}}
\end{align*}
for some finite presentation 
$f_{b}(\bx)=\sum'_{A=(a_{1},..., a_{k})}\alpha_{A}x_{a_{1}}\cdots x_{a_{k}}\in P$ of the
given polynomial $f_{b}$ defines a co-unital diagonal approximation. 
\end{proposition}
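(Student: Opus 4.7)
The plan is to verify the two defining conditions for $\Phi^{(2)}$: compatibility with the differential and co-unitality. Both reduce, via multiplicativity and the Leibniz rule, to verification on the $A^{\ev}$-linear generators $\diff f_{b}\in F_{2}$, since $\Phi^{(2)}$ is specified as a homomorphism of strictly graded commutative algebras with divided powers extending the already verified $\Phi^{(1)}$, and $\calt^{(2)}=\calt^{(1)}\langle F_{2}\rangle$ arises by adjoining to $\calt^{(1)}$ the free $A^{\ev}$-module $F_{2}$ in even degree~$2$.

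Co-unitality on $F_{2}$ is straightforward: using $\epsilon_{m}\circ\Phi^{(0)}=\id$ on scalars, and $\epsilon_{1}(\diff x'_{a})=0$, $\epsilon_{1}(\diff x''_{a})=\diff x_{a}$, together with the symmetric statement for $\epsilon_{2}$, one has $\epsilon_{m}(\diff f'_{b}+\diff f''_{b})=\diff f_{b}$, while each summand of the double sum contains \emph{both} a factor $\diff x'_{a_{\kappa}}$ (killed by $\epsilon_{1}$) and a factor $\diff x''_{a_{\lambda}}$ (killed by $\epsilon_{2}$) and so vanishes under either augmentation.

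The main content is the chain-map identity $\partial\Phi_{2}(\diff f_{b})=\Phi^{(1)}(\partial \diff f_{b})$. Using \ref{difT2}(\ref{dfA}) one has $\partial(\diff f'_{b}+\diff f''_{b})=j_{1}(\partial \diff f_{b})+j_{2}(\partial \diff f_{b})$: each $j_{m}(\partial\diff f_{b})$ is a sum over $\kappa$ of monomials split at position $\kappa$ as left/middle (for $m=1$, since $j_{1}(x''_{a})$ is a middle scalar) or middle/right (for $m=2$), whereas $\Phi^{(1)}(\partial \diff f_{b})$ places $x'$'s entirely on the left and $x''$'s entirely on the right of $\diff x'_{a_{\kappa}}+\diff x''_{a_{\kappa}}$. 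The key device is the telescoping
\[
x''_{a_{\kappa+1}}\cdots x''_{a_{k}}-x_{a_{\kappa+1}}\cdots x_{a_{k}}=\sum_{\lambda>\kappa}x''_{a_{\kappa+1}}\cdots x''_{a_{\lambda-1}}\,\partial(\diff x''_{a_{\lambda}})\,x_{a_{\lambda+1}}\cdots x_{a_{k}},
\]
together with its left-handed analog using $\partial(\diff x'_{a_{\mu}})=x_{a_{\mu}}-x'_{a_{\mu}}$, which express $\Phi^{(1)}(\partial \diff f_{b})-(j_{1}+j_{2})(\partial\diff f_{b})$ as a sum over $\kappa<\lambda$ and over $\mu<\kappa$ of terms each containing one factor $\partial(\diff x')$ or $\partial(\diff x'')$. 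A direct Leibniz computation for $\partial\omega_{A,\kappa,\lambda}$, where $\omega_{A,\kappa,\lambda}$ denotes the generic summand in the subtracted double sum and a sign $-1$ arises because the derivation reaches $\diff x''_{a_{\lambda}}$ across the degree-one factor $\diff x'_{a_{\kappa}}$, produces precisely the same collection of monomials after renaming dummy indices; hence $\partial\sum_{\kappa<\lambda}\omega_{A,\kappa,\lambda}=(j_{1}+j_{2})(\partial\diff f_{b})-\Phi^{(1)}(\partial\diff f_{b})$, which delivers the required equality. The main obstacle is purely combinatorial: carefully tracking signs and employing the tensor-over-$A$ identifications (so that, for instance, $\diff x\otimes_{A}y''$ is recognized as $\diff x\cdot y''\otimes_{A}1$ via the middle $A$) in order to line up the telescoped difference and the Leibniz expansion term by term.
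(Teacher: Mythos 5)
Your proposal is correct and takes essentially the same route as the paper: co-unitality is checked by inspection on the generators (each summand of the double sum contains both a $\diff x'$ and a $\diff x''$ factor), and the chain-map identity $\partial\Phi_{2}(\diff f_{b})=\Phi^{(1)}(\partial\diff f_{b})$ is obtained by expanding $\partial$ of the subtracted double sum via the Leibniz rule and matching it, through the telescoping device of \ref{lem:taylor}(\ref{df}) applied to sub-monomials, against $(j_{1}+j_{2})(\partial\diff f_{b})-\Phi^{(1)}(\partial\diff f_{b})$. The only cosmetic discrepancy is the ordering in your displayed telescoping identity (for a literal term-by-term match with the Leibniz expansion the unprimed, middle variables should stand to the left of the factor $\partial(\diff x''_{a_{\lambda}})$ and the double-primed ones to the right, as in the paper's computation), which is harmless because only the summed, telescoped form enters your argument and the degree-zero factors are central.
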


\begin{proof}
Inspection reveals immediately that $\Phi^{(2)}$ is co-unital, that is,
\[
\epsilon_{1}\Phi^{(2)} = \id_{\calt^{(2)}} = \epsilon_{2}\Phi^{(2)}\,.
\]
It thus remains to verify that $\Phi^{(2)}$ is compatible with the differentials, equivalently, that
$\partial\Phi_{2}(\diff f_{b})=\Phi^{(1)}(\partial\diff f_{b})$ for each $b$.
 
This is a straightforward verification. First, we exhibit $\partial(\diff f'_{b})$ 
and $\partial(\diff f''_{b})$ for use below,
\begin{align*}
\partial(\diff f'_{b})&= \sum_{A=(a_{1},..., a_{k})}\sum_{\kappa=1}^{k}
\alpha_{A}x'_{a_{1}}\cdots x'_{a_{\kappa -1}}\diff x'_{a_{\kappa}} x_{a_{\kappa+1}}\cdots x_{a_{k}}\,,\\
\partial(\diff f''_{b})&=\sum_{A=(a_{1},..., a_{k})}\sum_{\kappa=1}^{k}\alpha_{A}
x_{a_{1}}\cdots x_{a_{\kappa -1}}\diff x''_{a_{\kappa}}
x''_{a_{\kappa+1}}\cdots x''_{a_{k}}\,.
\end{align*}
Next we apply the differential to the double sum in the definition of $\Phi_{2}(\diff f_{b})$ and obtain
\begin{align*}
&\partial\left(\sum_{A=(a_{1},..., a_{k})}\sum_{1\leqslant \kappa<\lambda\leqslant k}
\alpha_{A}x'_{a_{1}}\cdots x'_{a_{\kappa -1}}\diff x'_{a_{\kappa}}
x_{a_{\kappa+1}}\cdots x_{a_{\lambda-1}}\diff x''_{a_{\lambda}}x''_{a_{\lambda+1}}\cdots x''_{a_{k}}\right)\\
&=\sum_{A=(a_{1},..., a_{k})}\sum_{1\leqslant \kappa<\lambda\leqslant k}
\alpha_{A}x'_{a_{1}}\cdots x'_{a_{\kappa -1}}(x_{a_{\kappa}}-x'_{a_{\kappa}})
x_{a_{\kappa+1}}\cdots x_{a_{\lambda-1}}\diff x''_{a_{\lambda}}x''_{a_{\lambda+1}}\cdots x''_{a_{k}}\\
&\ -\sum_{A=(a_{1},..., a_{k})}\sum_{1\leqslant \kappa<\lambda\leqslant k}
\alpha_{A}x'_{a_{1}}\cdots x'_{a_{\kappa -1}}\diff x'_{a_{\kappa}}
x_{a_{\kappa+1}}\cdots x_{a_{\lambda-1}}(x''_{a_{\lambda}}-x_{a_{\lambda}})x''_{a_{\lambda+1}}\cdots x''_{a_{k}}\\
&=\sum_{A=(a_{1},..., a_{k})}\sum_{1\leqslant \lambda\leqslant k}
\alpha_{A}(x_{a_{1}}\cdots x_{a_{\lambda -1}}-x'_{a_{1}}\cdots x'_{a_{\lambda-1}})\diff x''_{a_{\lambda}}
x''_{a_{\lambda+1}}\cdots x''_{a_{k}}\\
&\ -\sum_{A=(a_{1},..., a_{k})}\sum_{1\leqslant \kappa \leqslant k}
\alpha_{A}x'_{a_{1}}\cdots x'_{a_{\kappa -1}}\diff x'_{a_{\kappa}}(x''_{a_{\kappa+1}}\cdots 
x''_{a_{k}}-x_{a_{\kappa+1}}\cdots x_{a_{k}})\\
&=\partial(\diff f''_{b}) + \partial(\diff f'_{b}) -\Phi^{(1)}(\partial \diff f_{b})\,,
\end{align*}
where the first equality uses that $\partial$ is a skew derivation of degree one, and the second equality
applies \ref{lem:taylor}(\ref{df}) to the monomials $x_{a_{1}}\cdots x_{a_{\kappa-1}}$, respectively,
$x_{a_{\kappa+1}}\cdots x_{a_{k}}$, while the final equality uses the form of 
$\partial(\diff f'_{b})$ and $\partial(\diff f''_{b})$ as recalled above.
Reordering the terms, we obtain $\Phi^{(2)}(\partial \diff f_{b}) = \partial\Phi^{(2)}(\diff f_{b})$ as required.
\end{proof}

Note the following special property of the co-unital diagonal approximation just constructed.

\begin{cor}
\label{cocomm}
The co-unital diagonal approximation $\Phi^{(2)}$ is {\em cocommutative\/} in the $\diff f_{b}$, in that
the automorphism of graded algebras 
$\sigma:\calt^{(2)}\otimes_{A}\calt^{(2)}\to \calt^{(2)}\otimes_{A}\calt^{(2)}$,
ignoring differentials, that exchanges $\diff f'_{b}\leftrightarrow \diff f''_{b}$, for each $b$, but leaves the 
remaining variables unchanged, satisfies
$
\sigma\circ\Phi^{(2)} = \Phi^{(2)}
$.\qed
\end{cor}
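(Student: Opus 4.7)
The plan is to argue by inspection of the explicit formula defining $\Phi^{(2)}$, using that $\sigma$ and $\Phi^{(2)}$ are both algebra homomorphisms (the latter by construction, the former by definition of $\sigma$ as an automorphism of graded algebras, ignoring differentials, as stated). It then suffices to verify the identity $\sigma \circ \Phi^{(2)} = \Phi^{(2)}$ on a set of algebra generators of $\calt^{(2)}$, and such a set is provided by $A^{\ev}\subseteq \calt^{(0)}$, the basis elements $\diff x_{a}\in F_{1}$, and the basis elements $\diff f_{b}\in F_{2}$.

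On $A^{\ev}$ we have $\Phi^{(2)}(a\otimes b)= a\otimes 1\otimes b$ and on $F_{1}$ we have $\Phi^{(2)}(\diff x_{a})=\diff x'_{a}+\diff x''_{a}$. Neither of these expressions involves the variables $\diff f'_{b}$ or $\diff f''_{b}$, so $\sigma$ fixes them trivially. The entire content of the corollary is therefore to check the relation on the generators $\diff f_{b}$ of $F_{2}$.

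For this, I would simply read off the formula for $\Phi_{2}(\diff f_{b})$ given in Proposition~\ref{deg2}. The first two summands $\diff f'_{b}+\diff f''_{b}$ are interchanged by $\sigma$, hence their sum is preserved. The remaining term
\[
\sum_{A=(a_{1},...,a_{k})}\sum_{1\leqslant\kappa<\lambda\leqslant k}
\alpha_{A}x'_{a_{1}}\cdots x'_{a_{\kappa-1}}\diff x'_{a_{\kappa}}
x_{a_{\kappa+1}}\cdots x_{a_{\lambda-1}}\diff x''_{a_{\lambda}}
x''_{a_{\lambda+1}}\cdots x''_{a_{k}}
\]
is built only out of the variables $x'_{a}, x_{a}, x''_{a}, \diff x'_{a}, \diff x''_{a}$, none of which are altered by $\sigma$. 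Hence this term is fixed, and adding back the (swapped) first two summands gives $\sigma(\Phi_{2}(\diff f_{b}))=\Phi_{2}(\diff f_{b})$, completing the verification on generators and thus the proof.

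There is no substantive obstacle: the statement is essentially an observation about the symmetric shape of the formula in Proposition~\ref{deg2}, specifically that the ``cross-term'' sum is built entirely from $x$- and $\diff x$-variables and does not mix in any $\diff f'_{b}$ or $\diff f''_{b}$. The only thing worth noting carefully is that $\sigma$ is introduced as an algebra automorphism only on the underlying graded algebra, so one need not (and should not) verify anything about compatibility with the differential.
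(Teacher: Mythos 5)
Your verification is correct and is exactly the inspection the paper has in mind: the corollary carries no separate proof precisely because the cross-term in the formula of Proposition \ref{deg2} involves only the $x$- and $\diff x$-variables, so $\sigma$ fixes it and merely swaps the symmetric pair $\diff f'_{b}+\diff f''_{b}$, while the values on $A^{\ev}$ and on the $\diff x_{a}$ do not involve the $\diff f$-variables at all. The one point to keep in mind is that $\calt^{(2)}\otimes_{A}\calt^{(2)}$ carries divided powers, so ``checking on algebra generators'' tacitly uses that both $\Phi^{(2)}$ and $\sigma$ respect divided powers (hence agreement on the $\diff f_{b}$ propagates to the $\diff f_{b}^{(k)}$), which holds by construction of both maps.
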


Having determined one explicit co-unital diagonal approximation on $\calt^{(2)}$, what choices 
were involved? Provided $\calt\otimes_{A}\calt$ is exact in low degrees, 
we can easily describe all co-unital diagonal approximations on $\calt^{(2)}$ that are algebra 
homomorphisms between algebras with divided powers..

\begin{theorem}
\label{diagadapted}
Let $\calt$ be a Tate model of $\mu:A^{\ev}\to A$ as before. The just exhibited co-unital diagonal 
approximation $\Phi^{(2)}$ can be modified to the DG algebra homomorphism 
$\Psi:\calt^{(2)}\to  \calt^{(2)}\otimes_{A}\calt^{(2)}$ that respects divided powers, 
agrees in degree zero with $\Phi^{(0)}$ and is given on the algebra generators of $\calt$ in degrees 
$1$ and $2$ through
\begin{align*}
\Psi(\diff x_{a})&=\diff x'_{a}+ \diff x''_{a} +\partial \omega_{a}\\
\Psi(\diff f_{b}) &= \Phi^{(2)}(\diff f_{b}) + \sum_{a}\Delta_{b,a}(\bx',\bx'')\omega_{a} +\partial \eta_{b}\,,
\end{align*}
where $\Delta_{b,a}\in A^{\ev}$ are as in {\em \ref{lem:taylor}}, the
$\omega_{a}$ are of degree $2$ and the $\eta_{b}$ are of degree $3$ in $\calt^{(2)}\otimes_{A}\calt^{(2)}$, 
such that
$\partial \omega_{a}$ and $\sum_{a}\Delta_{b,a}(\bx',\bx'')\omega_{a} +\partial \eta_{b}$
are in $\Ker\epsilon_{1}\cap\Ker\epsilon_{2}$.

If $\calt^{(2)}\otimes_{A}\calt^{(2)}$ is exact in degrees $1$ and $2$, then these are the only possible 
co-unital diagonal approximations that are homomorphisms of DG algebras with divided powers and 
that agree with $\Phi^{(2)}$ in degree zero. Moreover, in this case $\omega_{a}$ and $\eta_{b}$ 
can already be chosen in $\Ker\epsilon_{1}\cap\Ker\epsilon_{2}$.
\end{theorem}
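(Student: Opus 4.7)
The plan is to prove the theorem in two steps: first verify directly that the stated formulas define a co-unital homomorphism of DG algebras with divided powers, and then, under the exactness hypothesis, show that every such $\Psi$ agreeing with $\Phi^{(0)}$ in degree zero has the given form with $\omega_{a}$ and $\eta_{b}$ in $\Ker\epsilon_{1}\cap\Ker\epsilon_{2}$.

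For the existence statement, since $\calt^{(2)}$ is, as a DG algebra with divided powers, freely generated over $A^{\ev}$ by the $\diff x_{a}$ (degree one) and $\diff f_{b}$ (degree two), a choice on these generators extends uniquely to a divided-power algebra homomorphism, so only the chain-map and co-unital properties need to be checked. Co-unitality follows directly from the hypotheses on $\partial\omega_{a}$ and on $\sum_{a}\Delta_{b,a}(\bx',\bx'')\omega_{a}+\partial\eta_{b}$, together with $\epsilon_{m}\Phi^{(2)}=\id$ from Proposition~\ref{deg2} and the identity $\epsilon_{m}(\diff x'_{a}+\diff x''_{a})=\diff x_{a}$. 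The chain-map property on $\diff x_{a}$ is trivial because $\partial^{2}\omega_{a}=0$ and $\Phi^{(1)}$ already intertwines differentials. For $\diff f_{b}$, the key computation is to substitute $\Psi(\diff x_{a}) = \diff x'_{a}+\diff x''_{a}+\partial\omega_{a}$ into the formula \ref{difT2}(\ref{dfA}) for $\partial\diff f_{b}$; collecting, by which variable is differentiated, the primed/double-primed monomial factors that multiply each $\partial\omega_{a}$ yields precisely the coefficient $\Delta_{b,a}(\bx',\bx'')$ from \ref{lem:taylor}(\ref{df}). Hence $\Psi(\partial\diff f_{b}) = \Phi^{(1)}(\partial\diff f_{b}) + \sum_{a}\Delta_{b,a}(\bx',\bx'')\,\partial\omega_{a}$, which matches $\partial\Psi(\diff f_{b})$ term by term: the $\Phi^{(2)}$-part contributes $\Phi^{(1)}(\partial\diff f_{b})$ by Proposition~\ref{deg2}, the $\partial\eta_{b}$-part dies under $\partial$, and the $\Delta_{b,a}$-coefficients are degree zero and central.

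For uniqueness, given any such $\Psi$, set $c_{a}=\Psi(\diff x_{a})-\Phi^{(1)}(\diff x_{a})$ and $d_{b}=\Psi(\diff f_{b})-\Phi^{(2)}(\diff f_{b})$. Co-unitality places $c_{a}$ and $d_{b}$ in $\Ker\epsilon_{1}\cap\Ker\epsilon_{2}$ (degrees one and two respectively), while the chain-map property makes $c_{a}$ a $1$-cycle. Picking $\omega_{a}\in\Ker\epsilon_{1}\cap\Ker\epsilon_{2}$ of degree two with $\partial\omega_{a}=c_{a}$ requires acyclicity of $\Ker\epsilon_{1}\cap\Ker\epsilon_{2}$ in degree one. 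Repeating the forward computation then shows $\partial(d_{b}-\sum_{a}\Delta_{b,a}(\bx',\bx'')\omega_{a})=0$, so this element is a $2$-cycle in $\Ker\epsilon_{1}\cap\Ker\epsilon_{2}$ and, by acyclicity in degree two, equals $\partial\eta_{b}$ for some $\eta_{b}\in\Ker\epsilon_{1}\cap\Ker\epsilon_{2}$ of degree three, giving the claimed form.

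The principal obstacle is the homological input: Lemma~\ref{lem:hot}(\ref{epsilon2}) is phrased for the full Tate model $\calt$, whereas the uniqueness statement concerns $\calt^{(2)}$. The remedy is that the diagram chase in the proof of that lemma applies verbatim once one knows $\epsilon_{m}:\calt^{(2)}\otimes_{A}\calt^{(2)}\to\calt^{(2)}$ is a quasi-isomorphism through degree two. This is where the hypothesis $H_{m}(\calt^{(2)}\otimes_{A}\calt^{(2)})=0$ for $m=1,2$ enters, combined with the fact that, by construction of the attaching map on $F_{2}$, the augmentation $\epsilon:\calt^{(2)}\to A$ is already a resolution through degree one. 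Feeding this into the three-row, three-column diagram of Lemma~\ref{lem:hot}(\ref{epsilon2}) and taking the long exact homology sequence yields $H_{1}(\Ker\epsilon_{1}\cap\Ker\epsilon_{2})=H_{2}(\Ker\epsilon_{1}\cap\Ker\epsilon_{2})=0$, which is exactly what the above bookkeeping requires.
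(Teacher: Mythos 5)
Your overall strategy coincides with the paper's: the existence half is the direct verification the paper leaves as ``easily verified'' (your computation of $\Psi(\partial\diff f_{b})$ via the coefficients $\Delta_{b,a}(\bx',\bx'')$ is exactly the right bookkeeping), and the uniqueness half is the same difference-of-cocycles argument, taking $c_{a}=\Psi(\diff x_{a})-\Phi^{(1)}(\diff x_{a})$ and $d_{b}=\Psi(\diff f_{b})-\Phi^{(2)}(\diff f_{b})$ and integrating them using the vanishing homology. You are also right to flag the only delicate point, namely that Lemma \ref{lem:hot}(\ref{epsilon2}) is stated for the full model $\calt$ while the theorem concerns $\calt^{(2)}$ (the paper itself glosses over this).

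However, your proposed remedy for that point does not work as stated. First, the claim that $\epsilon_{m}\colon\calt^{(2)}\otimes_{A}\calt^{(2)}\to\calt^{(2)}$ is a quasi-isomorphism through degree two does not follow from ``$\epsilon\colon\calt^{(2)}\to A$ is a resolution through degree one'' plus the hypothesis: that would require $H_{2}(\calt^{(2)})=0$, which is not among the facts you cite. Second, even granting it, the chase in Lemma \ref{lem:hot} does not apply verbatim in degree two: to pass from $H_{2}(\Ker\epsilon_{1}\cap\Ker\epsilon_{2})$ to $H_{2}(\Ker\epsilon_{2})$ the original argument uses acyclicity of $\Ker\epsilon$ in degree three, i.e.\ $H_{3}(\calt^{(2)})=0$, which is unavailable for the truncated model. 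The correct repair is to use the chain-level sections: $j_{1}$ splits $\epsilon_{2}$, and $j_{2}$ restricted to $\Ker\epsilon$ splits $\epsilon_{1}|_{\Ker\epsilon_{2}}\colon\Ker\epsilon_{2}\to\Ker\epsilon$. Hence in the long exact homology sequences of $0\to\Ker\epsilon_{2}\to\calt^{(2)}\otimes_{A}\calt^{(2)}\to\calt^{(2)}\to0$ and of $0\to\Ker\epsilon_{1}\cap\Ker\epsilon_{2}\to\Ker\epsilon_{2}\to\Ker\epsilon\to0$ all connecting maps vanish, so $H_{m}(\Ker\epsilon_{1}\cap\Ker\epsilon_{2})$ embeds into $H_{m}(\Ker\epsilon_{2})$, which embeds into $H_{m}(\calt^{(2)}\otimes_{A}\calt^{(2)})=0$ for $m=1,2$. (Incidentally this retract argument also shows $H_{2}(\calt^{(2)})=0$ under the hypothesis, so your quasi-isomorphism claim is true, just not for the reason you give.) Note finally that, as in the paper, the main uniqueness assertion needs only $H_{1}$ and $H_{2}$ of $\calt^{(2)}\otimes_{A}\calt^{(2)}$ to vanish, since $\partial\omega_{a}=c_{a}$ and $d_{b}=\sum_{a}\Delta_{b,a}\omega_{a}+\partial\eta_{b}$ lie in $\Ker\epsilon_{1}\cap\Ker\epsilon_{2}$ automatically; the acyclicity of $\Ker\epsilon_{1}\cap\Ker\epsilon_{2}$ is needed only for the ``Moreover'' clause that $\omega_{a},\eta_{b}$ themselves may be taken there, which is where your version front-loads it.
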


\begin{proof}
That $\Psi$ is a co-unital diagonal approximation along with $\Phi^{(2)}$ is easily verified directly. 

Conversely, if $\Psi$ is some co-unital diagonal approximation on $\calt^{(2)}$, then
$(\Psi-\Phi^{(2)})(dx_{a})$ is a cycle for each $a$ and, 
if $H_{1}(\calt^{(2)}\otimes_{A}\calt^{(2)})=0$, it is a boundary, thus, of the form $\partial\omega_{a}$ for 
suitable $\omega_{a}$ in degree two. Moreover, $\partial\omega_{a}$ is necessarily a cycle in 
$\Ker\epsilon_{1}\cap\Ker\epsilon_{2}$, as both diagonal approximations are co-unital, and
Lemma \ref{lem:hot}(\ref{epsilon2}) shows that $H_{1}$ 
of that complex vanishes too. Thus, $\omega_{a}$ can be chosen in 
$\Ker\epsilon_{1}\cap\Ker\epsilon_{2}$.

If also $H_{2}(\calt^{(2)}\otimes_{A}\calt^{(2)})=0$, then the $2$--cycles 
$\Psi(df_{b}) - \Phi^{(2)}(df_{b}) - \sum_{a}\Delta_{b,a}\omega_{a}$ are boundaries,
thus, of the form $\partial \eta_{b}$ for some form $\eta_{b}$ of degree $3$. Again,
$\partial \eta_{b}$ belongs necessarily to $\Ker\epsilon_{1}\cap\Ker\epsilon_{2}$ 
and $H_{2}$ of that complex vanishes by the same argument as before, whence we may 
replace $\eta_{b}$,
if necessary, by an element in $\Ker\epsilon_{1}\cap\Ker\epsilon_{2}$ that has the same image 
under the differential.
\end{proof}

To illustrate, we determine the cup product of two $1$--cochains.
\begin{sit}
\label{explicitcup}
Assume $\calt$ is a Tate model of $\mu:A^{\ev}\to A$ for some commutative $K$--algebra $A$.
Let $f,g:\calt\to A$ be $A^{\ev}$--linear cochains of degree $1$. Their cup product is 
then a cochain of degree $2$ with values in $A$, explicitly given by
\begin{align*}
(f\cup g)(\diff x_{a_{1}}\wedge \diff x_{a_{2}})&=\mu(f\otimes_{A}g)\Phi(\diff x_{a_{1}}\wedge \diff x_{a_{2}})\\
&=\mu(f\otimes_{A}g)((\diff x'_{a_{1}} + \diff x''_{a_{1}})\wedge (\diff x'_{a_{2}} + \diff x''_{a_{2}}))\\
&= g(\diff x_{a_{1}})f(\diff x_{a_{2}})-f(\diff x_{a_{1}})g(\diff x_{a_{2}})
\end{align*}
and
\begin{align*}
&(f\cup g)(\diff f_{b})=\mu(f\otimes_{A}g)\Phi(\diff f_{b})\\
&= \sum_{A}\sum_{1\leqslant \kappa<\lambda\leqslant k}
\alpha_{A}x_{a_{1}}\cdots x_{a_{\kappa -1}}f(\diff x_{a_{\kappa}})
x_{a_{\kappa+1}}\cdots x_{a_{\lambda-1}}g(\diff x_{a_{\lambda}})x_{a_{\lambda+1}}\cdots x_{a_{k}}\\
&= \sum_{A}\sum_{1\leqslant \kappa<\lambda\leqslant k}
\alpha_{A}x_{a_{1}}\cdots x_{a_{\kappa -1}}\widehat x_{a_{\kappa}}
x_{a_{\kappa+1}}\cdots x_{a_{\lambda-1}}\widehat x_{a_{\lambda}}x_{a_{\lambda+1}}\cdots x_{a_{k}}f(\diff x_{a_{\kappa}})g(\diff x_{a_{\lambda}})
\end{align*}
\end{sit}

It remains to find a palatable form of the coefficients of $f(\diff x_{a_{\kappa}})g(\diff x_{a_{\lambda}})$ in that expression.

\begin{theorem}
\label{thm:hesse}
Given a commutative $K$--algebra $A$, represented as $A\cong K[x; x\in X]/(f_{b})_{b}$. If for a given $b$, the polynomial
$f_{b}$ depends on the variables $x_{1},..., x_{n}\in X$, one may choose a 
co-unital diagonal approximation $\Phi^{(2)}$ on $\calt^{(2)}$ in such a way that
\begin{align*}
(f\cup g)(\diff f_{b}) &= \sum_{1\leqslant i\leqslant j\leqslant n}
\left(\frac{\partial^{(2)}f_{b}(\bx)}{\partial x_{i}\partial x_{j}}\bmod I\right)f(\diff x_{i})g(\diff x_{j})\ \in\ A\,,
\end{align*}
for any $A^{\ev}$--linear cochains $f,g:\calt\to A$ of degree one.
\end{theorem}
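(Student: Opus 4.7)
The plan is to compute $(f\cup g)(\diff f_{b})$ directly from the explicit expression obtained in \ref{explicitcup}, after making a judicious choice of the presentation of $f_{b}$ underlying Proposition \ref{deg2}. Specifically, I fix the given total ordering $x_{1}<\cdots<x_{n}$ of the variables occurring in $f_{b}$ and expand each monomial $\alpha\,x_{1}^{e_{1}}\cdots x_{n}^{e_{n}}$ appearing in $f_{b}$ as the single ordered tuple $(\underbrace{1,\ldots,1}_{e_{1}},\ldots,\underbrace{n,\ldots,n}_{e_{n}})$ with coefficient $\alpha\in K$; that is, all copies of $x_{1}$ come first, then all copies of $x_{2}$, and so on. Proposition \ref{deg2} with this data produces a specific co-unital diagonal approximation $\Phi^{(2)}$, and the corresponding formula from \ref{explicitcup},
\[
(f\cup g)(\diff f_{b})=\sum_{A}\sum_{\kappa<\lambda}\alpha_{A}\,x_{a_{1}}\cdots\widehat{x}_{a_{\kappa}}\cdots\widehat{x}_{a_{\lambda}}\cdots x_{a_{k}}\,f(\diff x_{a_{\kappa}})\,g(\diff x_{a_{\lambda}}),
\]
has the feature that any pair $\kappa<\lambda$ automatically satisfies $i:=a_{\kappa}\leqslant a_{\lambda}=:j$. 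Hence the sum is already of the desired shape $\sum_{1\leqslant i\leqslant j\leqslant n}c_{ij}\,f(\diff x_{i})g(\diff x_{j})$ with $c_{ij}\in A$ to be identified.

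For fixed $i\leqslant j$ and a fixed monomial $\alpha\,x_{1}^{e_{1}}\cdots x_{n}^{e_{n}}$ in $f_{b}$, I must count the pairs $(\kappa,\lambda)$ with $\kappa<\lambda$, $a_{\kappa}=i$, $a_{\lambda}=j$, and determine the contribution of each. Because $f,g$ take values in the commutative ring $A$, the monomial factor for any such pair reduces modulo $I$ to $\alpha$ times $x_{1}^{e_{1}}\cdots x_{n}^{e_{n}}$ with one $x_{i}$ and one $x_{j}$ removed, independent of the particular positions chosen. The counting is immediate from the normally ordered expansion: when $i<j$ every $x_{i}$-position precedes every $x_{j}$-position, giving $e_{i}e_{j}$ admissible pairs, while for $i=j$ one is choosing two of the $e_{i}$ available positions, giving $\binom{e_{i}}{2}$ pairs.

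By the very definition of divided partial derivative recalled in the introductory section, these multiplicities are precisely
\[
\frac{\partial^{(2)}(x_{i}^{e_{i}}x_{j}^{e_{j}})}{\partial x_{i}\partial x_{j}}=e_{i}e_{j}\qquad(i<j),\qquad\frac{\partial^{(2)}(x_{i}^{e_{i}})}{\partial x_{i}^{2}}=\binom{e_{i}}{2}.
\]
Summing over all monomials of $f_{b}$ and reducing modulo $I$ therefore yields $c_{ij}=\frac{\partial^{(2)}f_{b}(\bx)}{\partial x_{i}\partial x_{j}}\bmod I$, as claimed. The only real obstacle is combinatorial bookkeeping, which is rendered transparent by the normally ordered presentation: it orients the $(\kappa,\lambda)$ indexing in the required way and matches the factorial denominators in the divided derivative exactly, so no correction terms of the form $\omega_{a}$ or $\eta_{b}$ afforded by Theorem \ref{diagadapted} are needed for this identity.
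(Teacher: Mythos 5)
Your proposal is correct and follows essentially the same route as the paper: choose the normally ordered presentation of each monomial of $f_{b}$, apply the explicit cup-product formula from \ref{explicitcup} for the diagonal approximation of Proposition \ref{deg2}, and count the pairs $(\kappa,\lambda)$ to get multiplicities $e_{i}e_{j}$ for $i<j$ and $\binom{e_{i}}{2}$ for $i=j$, which are exactly the coefficients of the divided second partial derivatives. The only cosmetic slip is writing $\frac{\partial^{(2)}(x_{i}^{e_{i}}x_{j}^{e_{j}})}{\partial x_{i}\partial x_{j}}=e_{i}e_{j}$ for what is really $e_{i}e_{j}x_{i}^{e_{i}-1}x_{j}^{e_{j}-1}$, but since you account for the remaining monomial factor separately, the argument is sound.
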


\begin{proof}
Let $f_{b}=\sum_{\be\in \NN^{n}}\alpha_{\be}x_{1}^{e_{1}}\cdots x_{n}^{e_{n}}$ with $a_{\be}\in K$ 
be the presentation of $f_{b}$ as polynomial in
the variables $x_{1},..., x_{n}$. Rewrite the occurring monomials as
\begin{align*}
\alpha_{\be}x_{1}^{e_{1}}\cdots x_{n}^{e_{n}}=\alpha_{\be}x_{a_{1}}\cdots x_{a_{|\be|}}\,,
\end{align*}
where 
\begin{gather*}
x_{a_{1}}=\cdots = x_{a_{e_{1}}} = x_{1}\,,\\
x_{a_{e_{1}+1}}=\cdots = x_{a_{e_{1}+e_{2}}}=x_{2}\,,\\
\quad\vdots\\
x_{a_{e_{1}+\cdots +e_{n-1}+1}}=\cdots =x_{a_{e_{1}+\cdots + e_{n}}} = x_{n}\,.
\end{gather*}
In the corresponding choice of $\Phi_{2}(\diff f_{b})$ in Proposition \ref{deg2}, this term contributes
\begin{align*}
\sum_{1\leqslant \kappa<\lambda\leqslant |\be|}\alpha_{\be}x'_{a_{1}}\cdots x'_{a_{\kappa-1}}\diff x'_{a_{\kappa}} 
x_{a_{\kappa+1}}\cdots x_{a_{\lambda-1}}\diff x''_{a_{\lambda}}x''_{a_{\lambda+1}}\cdots  x''_{a_{|\be|}}\,.
\end{align*}
Specializing to $x_{a}=x'_{a}=x''_{a}$ for each index $a$, and sorting reduces this sum to
\begin{align*}
\alpha_{\be}&
\left(
\sum_{i=1}^{n}\binom{e_{i}}{2} x_{1}^{e_{1}}\cdots x^{e_{i-1}}_{i-1}x^{e_{i}-2}_{i}x^{e_{i+1}}_{i+1}
\cdots x_{n}^{e_{n}}\diff x'_{i}\diff x''_{i}
\right.\\
\ &+\left.\sum_{1\leqslant i<j\leqslant n}e_{i}e_{j} x_{1}^{e_{1}}\cdots x^{e_{i-1}}_{i-1}x^{e_{i}-1}_{i}x^{e_{i+1}}_{i+1}
\cdots x^{e_{j-1}}_{j-1}x^{e_{j}-1}_{j}x^{e_{j+1}}_{j+1}\cdots x_{n}^{e_{n}}\diff x'_{i}\diff x''_{j}\right)
\end{align*}
The coefficient of $\diff x'_{i}\diff x''_{j}$ in this expression is indeed the corresponding divided second derivative of the monomial, 
whence, putting the terms together again, it follows that for this choice of presentation,
\begin{align*}
\Phi_{2}(\diff f_{b})(\bx,\bx,\bx)= \diff f'_{b}+\diff f''_{b} -
\sum_{1\leqslant i\leqslant j\leqslant n}\frac{\partial^{(2)}f_{b}(\bx)}{\partial x_{i}\partial x_{j}}\diff x'_{i}\diff x''_{j}
\end{align*}
and then the cup product takes the form claimed.
\end{proof}

\section{Proof of Theorem \ref{ThmA}}
Let $A=P/I$ be as before a presentation of a commutative $K$--algebra as quotient of a polynomial ring 
$P=K[x; x\in X]$ over some commutative ring $K$ modulo an ideal $I=(f_{b})_{b\in Y}\subseteq P$.
We need to show that the Hessian quadratic map
\begin{align*}
h_{\bff}= \sum_{b\in Y} h_{f_{b}}\partial_{f_{b}}:\Der_{K}(P,A)\to \prod_{b\in Y}A\partial_{f_{b}}
\end{align*}
as defined in \ref{sit:2.4} sends $\Der_{K}(A,A)\subseteq \Der_{K}(P,A)$ to 
$N_{A/P}=\Hom_{A}(I/I^{2},A)\subseteq  \prod_{b\in Y}A\partial_{f_{b}}$.

Now an element $(u_{b}\partial_{f_{b}})_{b}\in\prod_{b\in Y}A\partial_{f_{b}}$, represents an element in
the normal module $N_{A/P}$ if, and only if, for every relation $\sum_{b}z_{b}f_{b}=0$, with $z_{b}\in P$ 
and only finitely many of these elements nonzero, the sum
$\sum_{b}z_{b}u_{b}$ is zero in $A$. 

Let $X'=\{x_{1},..., x_{N}\}\subset X$ be the finite subset of variables that are involved in the finitely
many polynomials $f_{b}$ that occur with nonzero coefficient in the given relation.

If $D=\sum_{x\in X}a_{x}\frac{\partial}{\partial x}\in \Der_{K}(A,A)$ is given, set 
$D'= \sum_{x\in X'}\tilde a_{x}\frac{\partial}{\partial x}$ where $\tilde a_{x}\in P$ lifts $a_{x}\in A$.
One then has $D'(f_{b})\equiv D(f_{b})=0 \bmod I$, and 
\begin{align*}
q(D)_{b} &= \sum_{1\leqslant i\leqslant j\leqslant N}
\frac{\partial^{(2)}f_{b}(\bx)}{\partial x_{i}\partial x_{j}}a_{i}a_{j}\bmod I 
\equiv \tilde h_{\bff}(D')(f_{b})\bmod I
\end{align*}
for those $f_{b}$ that are involved in the relation. Next note that the second divided power $D'^{(2)}=
\sum_{1\leqslant i\leqslant j\leqslant N}\frac{\partial^{(2)}}{\partial x_{i}\partial x_{j}}\tilde a_{i}\tilde a_{j}$ of $D'$
satisfies
\[
D'^{(2)}(fg) = D'^{(2)}(f)g + D'(f)D'(g)+ fD'^{(2)}(g)
\] 
for any polynomials $f,g\in P$.

Apply $D'^{(2)}$ to the relation $0=\sum_{b}z_{b}f_{b}$ to obtain
\begin{align*}
0=D'^{(2)}\left(\sum_{b} z_{b}f_{b}\right)&= \sum_{b}D'^{(2)}(z_{b}f_{b}) \\
&=\sum_{b}(D'^{(2)}(z_{b})f_{b} + D'(z_{b})D'(f_{b}) + z_{b} D'^{(2)}(f_{b}))\\
&\equiv \sum_{b}z_{b} q(D)_{b}\bmod I
\end{align*}
as in the middle equation, each term $D'^{(2)}(z_{b})f_{b}$ is in $I$ and 
$D'(f_{b})\equiv D(f_{b})=0\bmod I$ as $D$ is a derivation on $A$. 
Therefore, $q(D)=(q(D)_{b})_{b}$ is indeed an element of the normal module. 
This proves part (1) of Theorem \ref{ThmA}.

Part (2) of Theorem \ref{ThmA} just restates a well-known fact, but we give another 
deduction using the Tate model $\calt$ of the multiplication map $\mu:A^{\ev}\to A$ and 
establish along the way as well Part (3) of Theorem \ref{ThmA}. 

To this end, recall, from \ref{deg1} above, that $\calt^{(1)}$ is just the Koszul complex on
$(x''_{a}-x_{a})_{a}$ in $A^{\ev}$ and that its dual $\Hom_{A^{\ev}}(\calt^{(1)},A)$ carries the cup 
product induced by $\Phi^{(1)}$ with respect to which its cohomology 
$H^{\bdot}(\Hom_{A^{\ev}}(\calt^{(1)},A)) \cong \Hom_{A}(\Omega^{\bdot}_{P/K},A)$ is a strictly 
graded commutative algebra.

As the diagonal approximation $\Phi$ on $\calt$ extends the diagonal approximation $\Phi^{(1)}$ on $\calt^{(1)}$,
the $A$--linear surjective restriction map 
$p:\calt^{*}=\Hom^{\bdot}_{A^{\ev}}(\calt, A)\to \calt^{(1)*}=\Hom^{\bdot}_{A^{\ev}}(\calt^{(1)}, A)$ 
resulting from the inclusion $\calt^{(1)}\subset \calt$ is compatible with the cup products on source 
and target, thus, induces a $K$--algebra homomorphism in cohomology. 

If $\calk$ is the kernel of the restriction map on the complexes, then the long exact cohomology sequence contains
\begin{align*}
\xymatrix{
H^{1}(\calt^{(1)*})\ar[r]\ar@{=}[d]&H^{2}(\calk)\ar[r]\ar@{=}[d]&
H^{2}(\calt^{*})\ar[r]^-{H^{2}(p)}\ar@{=}[d]&H^{2}(\calt^{(1)*})\ar@{=}[d]\\
\Der_{K}(P,A)\ar[r]^-{\jac}&N_{A/K}\ar[r]^-{\delta}&\Ext^{2}_{A^{\ev}}(A,A)\ar[r]&\Hom_{A}(\Omega^{2}_{P/A},A)\,.
}
\end{align*}
If now $D\in \Der_{K}(A,A)=H^{1}(\Hom^{\bdot}_{A^{\ev}}(\calt,A))$ then $H^{2}(p)(D\cup D) = 0$, as  $H^{\bdot}(p)$
is an algebra homomorphism into a strictly graded commutative algebra. Thus, $D\cup D$ is in the image of the
map $\delta:N_{A/P}\to \Ext^{2}_{A^{\ev}}(A,A)$, and this image is known to be $T^{1}_{A/K}$, 
the first Andr\'e--Quillen cohomology of $A$ over $K$. Moreover, the explicit description of the cup product on
$\Hom_{A^{ev}}(\calt^{(2)},A)$ shows that $q(D)\in N_{A/P}$ is mapped via $\delta$ to $D\cup D$.\qed
 
\section{Hochschild Cohomology of Homological Complete Intersections}
Now we consider the case that $A=P/I$ is a homological complete intersection over $K$ with
$I/I^{2}$ free.

\begin{proposition}{\em (see \cite{BMR} and \cite{ABS, Iy})}
\label{t2}
Assume the commutative $K$--algebra $A$ admits a presentation $A=P/I$, with $P=\Sym_{K}F$ a
symmetric $K$--algebra on a free $K$--module $F$, and $I\subseteq P$ an ideal.   

If $\Tor^{K}_{+}(A,A) =\oplus_{i>0}\Tor^{K}_{i}(A,A)=0$, then $\calt^{(2)}$ as constructed in Corollary
{\em \ref{cor:tate}\/} is already a Tate model of $\mu:A^{\ev}\to A$ if, and only if, 
$I/I^{2}$ is a free $A$--module, the natural homomorphism of strictly graded commutative algebras 
$\bigwedge_{A}I/I^{2}\to \Tor^{P}(A,A)$ is an isomorphism, and the surjection $P\otimes_{K}G\otimes_{K}P
\to I$ was chosen to induce a bijection from a $K$--basis of $G$ to an $A$--basis of $I/I^{2}$.

\end{proposition}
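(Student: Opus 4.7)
The plan is to characterize when $\calt^{(2)}$ is acyclic in positive degrees; by the construction in Corollary \ref{cor:tate}, $H_0(\calt^{(2)}) = A$ and $H_1(\calt^{(2)}) = 0$ already, so the question reduces to vanishing in degrees $\geq 2$. My approach is to first compute $H_*(\calt^{(1)})$, then run a spectral sequence to pass to $\calt^{(2)}$.

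For the first step, observe that $\calt^{(1)} = \bigwedge_{A^{\ev}} F_1$ is obtained from the Koszul complex over $P^{\ev} = P\otimes_K P$ on the sequence $(dx_a)_a$ by base change along $P^{\ev} \to A^{\ev}$; that Koszul complex resolves $P$ since $(dx_a)$ is a regular sequence generating the diagonal ideal in $P^{\ev}$. Hence $H_*(\calt^{(1)}) \cong \Tor^{P^{\ev}}_*(P, A^{\ev})$. The base-change identity $P \otimes^L_{P^{\ev}}(M \otimes^L_K N) \simeq M \otimes^L_P N$, valid because $P$ is $K$-flat, together with the hypothesis $\Tor^K_+(A, A) = 0$ that identifies $A^{\ev}$ with $A \otimes^L_K A$, then yields $H_*(\calt^{(1)}) \cong \Tor^P_*(A, A)$ as strictly graded commutative $A$-algebras. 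Under this isomorphism the attaching cycle $\partial\diff f_b$ represents the class $[f_b] \in I/I^2 = \Tor^P_1(A, A)$ by \ref{difT2}.

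Next, I filter $\calt^{(2)}$ by divided-power degree in the $\diff f_b$. The associated graded complex is $\calt^{(1)} \otimes_{A^{\ev}} \Gamma_{A^{\ev}} F_2$ with trivial differential on the $\Gamma_{A^{\ev}} F_2$ factor, so taking homology in the $\calt^{(1)}$-direction gives an $E_1$-page
\[
E_1 \cong \Tor^P_*(A, A) \otimes_A \Gamma_A V, \qquad V := F_2 \otimes_{A^{\ev}} A,
\]
with $d_1$ the derivation sending $\diff f_b^{[k]}$ to $[f_b] \cdot \diff f_b^{[k-1]}$; this is the universal Koszul--divided-power differential associated with the $A$-linear map $V \to \Tor^P_1(A, A)$. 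Classical acyclicity of the complex $\bigwedge_A V \otimes_A \Gamma_A V$ with its universal differential (valid over any commutative ring, with divided powers providing the characteristic-free input) then gives: if $V \to I/I^2$ is an isomorphism and the natural map $\bigwedge_A(I/I^2) \to \Tor^P_*(A, A)$ is an isomorphism, $E_1$ is acyclic in positive degrees and the spectral sequence collapses to $A$ in total degree $0$, so $\calt^{(2)}$ is a Tate model. Conversely, freeness of $F_2$ forces the map $V \to I/I^2$ to come from a compatible choice of generators, yielding freeness of $I/I^2$ (condition~(i)) and the basis bijection (iii); any surplus in $\Tor^P_*(A, A)$ beyond $\bigwedge_A(I/I^2)$ survives at $E_\infty$ and obstructs acyclicity, making (ii) necessary as well.

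The main obstacle is identifying the $E_1$-differential with the universal Koszul/divided-power differential and the subsequent acyclicity of $\bigwedge_A V \otimes_A \Gamma_A V$---both characteristic-free statements requiring care to articulate cleanly. This is essentially the content isolated in \cite{BMR} and \cite{ABS, Iy}, which I would invoke directly; with these in hand, the bookkeeping in the first two steps is routine.
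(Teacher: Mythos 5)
Your setup and your forward direction track the paper closely: the identification $H_{*}(\calt^{(1)})\cong \Tor^{P\otimes_{K}P}(P,A\otimes_{K}A)\cong \Tor^{P}(A,A)$, using $K$--flatness of $P$ together with $\Tor^{K}_{+}(A,A)=0$, is exactly the paper's first step, and your filtration by divided-power degree in the $\diff f_{b}$, combined with the classical acyclicity of $\bigwedge_{A}W\otimes_{A}\Gamma_{A}W$ for $W$ free, amounts to a proof of the implication that the paper simply attributes to Tate. That half is fine.

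The genuine gap is in the converse, which is where the weight of the proposition actually lies. Two steps of your sketch are unjustified. First, ``freeness of $F_{2}$ forces \dots\ freeness of $I/I^{2}$'' is not an argument: $V=F_{2}\otimes_{A^{\ev}}A$ is free and surjects onto $I/I^{2}$, but a quotient of a free module need not be free; that $I/I^{2}$ is free and that the chosen generators map to a basis is part of what must be extracted from acyclicity of $\calt^{(2)}$, not an input. Second, the claim that any surplus in $\Tor^{P}_{*}(A,A)$ beyond $\bigwedge_{A}(I/I^{2})$ ``survives at $E_{\infty}$'' is not justified: in your spectral sequence the differential of $\calt^{(2)}$ only drops the filtration by at most one, which does not force degeneration after $E_{1}$, so higher differentials $d_{r}$, $r\geqslant 2$, could a priori kill such classes, and reconstructing the $E_{1}$-page from the abutment also meets extension and cancellation problems. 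This necessity statement is precisely the main theorem of \cite{BMR} (see also \cite{Iy}), which the paper invokes wholesale; your instinct to lean on those references is right, but you aim the citation at the easy, classical ingredients (the form of the $E_{1}$-differential and the acyclicity of the standard divided-power Koszul complex) rather than at the converse, which is the genuinely hard part and is not recovered by the argument you give.
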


\begin{proof}
In Corollary \ref{cor:tate} we formed $\calt^{(1)}$ as the Koszul complex on $(x''_{a}-x'_{a})_{a}$ in $A^{\ev}$, 
where $x_{a}$ runs through a $K$--basis of $F$. Its homology can be identified as 
\[
 H(\calt^{(1)})\cong \Tor^{P\otimes_{K}P}(P,A\otimes_{K}A)\,.
 \]
By \cite[Chap.~XVI\S 5(5a)]{CE}, flatness of $P$ over $K$ together with $\Tor^{K}_{+}(A,A) = 0$
yields an isomorphism $\Tor^{P\otimes_{K}P}(P,A\otimes_{K}A)\cong \Tor^{P}(A,A)$.

If $I/I^{2}$ is free and the canonical map $\bigwedge_{A}I/I^{2}\to \Tor^{P}(A,A)\cong H(\calt^{(1)})$ is 
an isomorphism of algebras, then already Tate \cite{Ta} showed that $\calt^{(2)}=\calt$, provided the
map $G\to I/I^{2}$ sends a $K$--basis to an $A$--basis.
 The main result of \cite{BMR} yields the converse.
\end{proof}

\begin{remark}
This proposition applies in particular when $I=(f_{1},..., f_{c})\subseteq P=K[x_{1},..., x_{n}]$ defines
a {\em complete intersection}, in the sense that the $f_{j}$ form a Koszul--regular sequence in $P$.
In this case, the condition $\Tor^{K}_{+}(A,A) =0$ is equivalent to exactness of the Koszul complex
on $f_{1}(\bx'),..., f_{c}(\bx'), f_{1}(\bx''_{1}),..., f_{c}(\bx'')$ in $P\otimes_{K}P\cong K[\bx',\bx'']$, in that this 
complex is simply the tensor product of the Koszul complex on $f_{1},..., f_{c}$ in $P$, 
a free resolution of $A$ over $K$, with itself over $K$.

The resolution of a complete intersection ring over its enveloping algebra through the Tate model
has a long history.

Wolffhardt \cite{Wol} was the first to exhibit it, but he apparently was unaware of Tate's result 
and wrote down the resolution in case $K$ is a field of characteristic zero --- in which case divided 
powers can be replaced by symmetric ones at the expense of introducing denominators. 

In explicit form, this resolution was also described in \cite{BKu, BACH, GG}.
As the reviewers of \cite{MRo} in {\sc MathSciNet} note: 
``...the authors recover the calculation of $\Hoch_{*}(A/K,A)$ due originally to K. Wolffhardt 
[Trans. Amer. Math. Soc. 171 (1972), 51Ð66; MR0306192 (46 \#5319)], and more recently 
(since 1988) to a host of other authors, including the reviewers.''

To see that the condition $\Tor^{K}_{+}(A,A) = 0$ is not void even in this special situation, 
contrary to what is implied in \cite{BACH} or \cite{GG}, one may just take $c=1$ with, say, 
$K=\ZZ$, and  $f_{1}=17x\in \ZZ[x]$. See also the next section for a detailed discussion of this point.
\end{remark}

\begin{remark}
Wolffhardt \cite{Wol} treats the analytic case, when $A$ is quotient of a noetherian smooth analytic 
algebra in the broad sense of \cite[Supplement]{And}. We leave it to the readers to retrace the foregoing 
results and to convince themselves that everything works as well in that situation, provided 
one replaces the occurring tensor products by their analytic counterparts.
\end{remark}

We now turn to the

\subsection*{Proof of Theorem  \ref{thm:ci}} In light of \ref{explicitcup} and Theorem \ref{thm:hesse}, 
we know explicitly the form of the cup product on $\Hom_{A^{\ev}}(\calt^{(2)},A)$, thus, on 
$\Hom_{A^{\ev}}(\calt,A)$ in the case at hand.

Set $F^{*}=\Hom_{A^{\ev}}(F,A)$ for any $A^{\ev}$--module $F$.
As a complex, $\Hom^{\bdot}_{A^{\ev}}(\calt^{(2)},A)$ is the Koszul complex on
$\left(\sum_{j=1}^{c}\frac{\partial f_{j}}{\partial x_{i}}{\boldsymbol\partial}_{f_{j}}\right)_{i=1}^{n}$ 
in the polynomial ring $A[{\boldsymbol\partial}_{f_{1}},..., {\boldsymbol\partial}_{f_{c}}]$. 
Here $\{{\boldsymbol\partial}_{f_{j}}\}_{j=1,..,c}\in F_{2}^{*}$ form the dual $A$--basis to 
$\{\diff f_{j}\}_{j=1,.., c}
\subseteq F_{2}$ and we employ the $A$--basis 
${\boldsymbol\partial}_{x_{1}},..., {\boldsymbol\partial}_{x_{n}}
\in F^{*}_{1}$ dual to $\{\diff x_{i}\}_{i=1,...,n}\subseteq  F_{1}$.  
That Koszul complex has then the compact description
\begin{align*}
\Hom^{\bdot}_{A^{\ev}}(\calt^{(2)},A) = 
{\bigwedge}_{A}\left(F^{*}_{1}[-1]\right)\otimes_{A}\Sym_{A}\left(F^{*}_{2}[-2]\right)
\end{align*}
with differential 
$\partial({\boldsymbol\partial}_{x_{i}})=
\sum_{j=1}^{c}\frac{\partial f_{j}}{\partial x_{i}}{\boldsymbol\partial}_{f_{j}}$
and $\partial{\boldsymbol\partial}_{f_{j}}= 0$. The cohomological grading on 
$\Hom^{\bdot}_{A^{\ev}}(\calt^{(2)},A)$ 
is recovered from putting ${\boldsymbol\partial}_{x_{i}}$ into degree $1$ and 
${\boldsymbol\partial}_{f_{j}}$ into degree $2$ as indicated.

It remains to determine the multiplicative structure. 

Identify
$\{{\boldsymbol\partial}_{x_{i_{1}}}\wedge {\boldsymbol\partial}_{x_{i_{2}}}\}_{i_{2}<i_{1}}\subseteq 
\bigwedge^{2}_{A}F^{*}_{1}$ with the $A$--basis dual to 
$\{\diff x_{i_{2}}\wedge \diff x_{i_{1}}\}_{i_{2}<i_{1}}$ in $\bigwedge^{2}_{A}F_{1}$. The 
cup product resulting from the diagonal approximation in Theorem \ref{thm:hesse} is then explicitly given by
\begin{align*}
{\boldsymbol\partial}_{x_{i_{1}}}\cup {\boldsymbol\partial}_{x_{i_{2}}} &=
\begin{cases}
{\boldsymbol\partial}_{x_{i_{1}}}\wedge {\boldsymbol\partial}_{x_{i_{2}}} + 
\sum_{j=1}^{c}\frac{\partial^{(2)}f_{j}}{\partial x_{i_{1}}\partial x_{i_{2}}} {\boldsymbol\partial} f_{b}&
\text{if $i_{1}<i_{2}$,}\\
\sum_{j=1}^{c}\frac{\partial^{(2)}f_{j}}{\partial^{2} x_{i}} {\boldsymbol\partial} f_{j}&\text{if $i=i_{1}=i_{2}$,}\\
{\boldsymbol\partial}_{x_{i_{1}}}\wedge {\boldsymbol\partial}_{x_{i_{2}}} &\text{if $i_{1}>i_{2}$.}
\end{cases}
\end{align*}
Moreover, the elements ${\boldsymbol\partial} f_{j}$ are central, the co-unital diagonal approximation $\Phi$ being 
cocommutative in the $\diff f_{j}$ as pointed out in Corollary \ref{cocomm}. These explicit calculations show that 
$\Hom^{\bdot}_{A^{\ev}}(\calt,A)$ with the cup product $\cup=\cup_{\Phi}$ from Theorem \ref{thm:hesse} 
satisfies the relations
\begin{align*}
{\boldsymbol\partial}_{x_{i}}\cup {\boldsymbol\partial}_{x_{i}} &= 
\sum_{j=1}^{c}\frac{\partial^{(2)}f_{j}}{\partial^{2} x_{i}} {\boldsymbol\partial} f_{j}\,,\\
{\boldsymbol\partial}_{x_{i_{1}}}\cup {\boldsymbol\partial}_{x_{i_{2}}} + 
{\boldsymbol\partial}_{x_{i_{2}}}\cup {\boldsymbol\partial}_{x_{i_{1}}} &= 
\sum_{j=1}^{c}\frac{\partial^{2}f_{j}}{\partial x_{i_{1}}\partial x_{i_{2}}} {\boldsymbol\partial} f_{j}\,,\\
{\boldsymbol\partial}_{x_{i}}\cup {\boldsymbol\partial}_{f_{j}} &= 
{\boldsymbol\partial}_{f_{j}}\cup {\boldsymbol\partial}_{x_{i}}\,,\\
{\boldsymbol\partial}_{f_{j_{1}}}\cup {\boldsymbol\partial}_{f_{j_{2}}} &= 
{\boldsymbol\partial}_{f_{j_{2}}}\cup {\boldsymbol\partial}_{f_{j_{1}}}\,.
\end{align*}
for $i,i_{1},i_{2}=1,..., n$ and $j, j_{1}, j_{2}=1,.., c$.

Putting it together, this shows $(\Hom^{\bdot}_{A^{\ev}}(\calt,A), \cup) \cong \Cliff(q)$ as graded algebras
and the differential on $\Hom^{\bdot}_{A^{\ev}}(\calt,A)$ readily identifies with $\partial_{\jac_{\bff}}$ on 
$\Cliff(q)$.

This completes the proof of Theorem  \ref{thm:ci}.\qed

\section{The Case of One Variable}
\begin{sit}
Here we consider {\em cyclic extensions\/} $A=K[x]/(f)$, with $K$ some commutative ring, 
$f=a_{0}x^{d}+\cdots +a_{d}\in P=K[x]$ a polynomial. We set $I=(f)$ and abbreviate 
$\partial_{x}f=\frac{\partial f}{\partial x}$. Let $\fc=\fc_{f}=(a_{0},..., a_{d})\subseteq K$ 
be the {\em content ideal\/} of $f$. 
By \cite[5.3 Thm.~7]{Nor}, the polynomial $f$ is a {\em non-zero-divisor\/}
in $K[x]$ if, and only if, the annihilator of the content ideal in $K$ is zero, equivalently, 
$\Hom_{K}(K/\fc,K)=0$. 
\end{sit}

To provide some context, we quote the following results from the literature and refer to the references cited below
for unexplained terminology. 

\begin{proposition}
\label{prop:one}
Let $f=a_{0}x^{d}+\cdots +a_{d}\in K[x]$ be a polynomial as above and set $A=K[x]/(f)$. 
\begin{enumerate}[\rm(1)]
\item
\label{part0}
$A$ is {\em free\/} as $K$--module if, and only if, the ideal $I$ is generated by a {\em monic\/} polynomial.
$A$ is then necessarily free {\em of finite rank}.
\item
\label{part1'}
$A$ is {\em finitely generated projective\/} as $K$--module if, and only if,  the ideal $I$ is generated 
by a {\em quasi-monic} polynomial.
\item
\label{part1} 
$A$ is {\em projective\/} as $K$--module if, and only if, the ideal $I$ is generated by an
{\em almost quasi-monic} polynomial. 
\item
\label{part2}
$A$ is {\em flat\/} as $K$--module if, and only if, the content ideal $\fc$ of $f$ is generated in $K$ by an {\em 
idempotent}.
\end{enumerate}
In cases {\em (\ref{part0})\/} and {\em (\ref{part1'})}, $f$ is a non-zero-divisor in $K[x]$, while in cases
{\em (\ref{part1})\/} and {\em (\ref{part2})}, the content ideal is generated by an idempotent $e\in K$,
 $\fc_{f}=(e)$. With $K_{1} = K/(1-e), K_{2} = K/(e)$, so that $K=K_{1}\times K_{2}$,
the polynomial $ef$ is then a non-zero-divisor in $K_{1}[x]$, while $(1-e)f=0$, and 
$A\cong K_{1}[x]/(ef)\times K_{2}[x]$.
\end{proposition}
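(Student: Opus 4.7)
The four statements nest as free $\Rightarrow$ finitely generated projective $\Rightarrow$ projective $\Rightarrow$ flat, and the strategy is to peel them off in reverse order, using at each stage the classical identification $\Ann_{K[x]}(f) = \Ann_K(\fc_f)[x]$ (McCoy) together with the short exact sequence
\begin{equation*}
0 \to \Ann_K(\fc_f)[x] \to K[x] \xrightarrow{\cdot f} K[x] \to A \to 0
\end{equation*}
and the criterion \cite[5.3\,Thm.~7]{Nor} that $f$ is a non-zero-divisor in $K[x]$ iff $\Ann_K(\fc_f) = 0$.

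\textbf{Flatness (4).} For the forward direction, I would compute $\Tor^K_1(A, K/\fa)$ for an arbitrary ideal $\fa \subseteq K$ from the presentation above; flatness forces $\fa \cap \fc_f = \fa\fc_f$ for every $\fa$, and taking $\fa = \fc_f$ yields $\fc_f = \fc_f^2$. Since $\fc_f = (a_0,\dots,a_d)$ is finitely generated, such an idempotent ideal is principal, generated by an idempotent $e \in K$. Conversely, given $\fc_f = (e)$, one has $(1-e)a_i = 0$ for every $i$, so in the decomposition $K \cong K_1 \times K_2$ with $K_1 = K/(1-e)$ and $K_2 = K/(e)$ the polynomial becomes $ef$ on the first factor (with content $K_1$) and $0$ on the second, giving $A \cong K_1[x]/(ef) \times K_2[x]$ with both components manifestly flat over the respective component of $K$.

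\textbf{Parts (1)--(3).} For (1)\,$\Leftarrow$, Euclidean division by a monic generator of $I$ of degree $d$ provides the free basis $\{1,x,\dots,x^{d-1}\}$, and the rank is automatically finite. For (1)\,$\Rightarrow$, $K$-freeness implies flatness, so part (4) gives $\fc_f = (e)$ and the splitting $A \cong K_1[x]/(ef) \times K_2[x]$; the second factor is $K$-free of infinite rank unless $K_2 = 0$, so global freeness of $A$ forces $e = 1$ and hence $\fc_f = K$. Once a finite-rank $K$-basis of $A$ has been secured (via a direct argument about polynomial generation), the $K$-linear operator ``multiplication by $x$'' is a square matrix over $K$, and Cayley--Hamilton produces a monic polynomial $p(x) \in I$ of degree $d$; since $K[x]/(p)$ is already $K$-free of rank $d$ and surjects onto $A$ of the same rank, $I = (p)$. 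Parts (2) and (3) follow by localization: finitely generated projectivity (respectively projectivity) of $A$ over $K$ is equivalent to each $A_{\fp}$ being $K_{\fp}$-free of finite (resp.\ locally constant) rank, and applying (1) locally translates this directly into the quasi-monic (resp.\ almost quasi-monic) conditions on $f$.

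\textbf{Final clause and main obstacle.} In cases (1) and (2) one has $\fc_f = K$ globally or locally, hence $\Ann_K(\fc_f) = 0$ and $f$ is a non-zero-divisor in $K[x]$ by the criterion. In cases (3) and (4) the idempotent $e$ from part (4) splits $K = K_1 \times K_2$ as above: $(1-e)f = 0$ because $(1-e)\fc_f = 0$, while $ef \in K_1[x]$ has content ideal $K_1$ and is therefore a non-zero-divisor in $K_1[x]$, yielding the stated product decomposition $A \cong K_1[x]/(ef) \times K_2[x]$. The main technical obstacle is the converse in part (1): ruling out the possibility of a nonzero component $K_2 = K/(e)$, and then pinning down the finite rank of $A$; once this is in hand, Cayley--Hamilton and the rank comparison close the argument uniformly, and localization handles the rest.
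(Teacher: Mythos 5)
The paper itself does not prove this proposition at all: it simply cites Brewer--Montgomery \cite{BM} for parts (1)--(3) and Ohm--Rush \cite[Thm.~4.3]{OR} for part (4), so a self-contained argument would have to reprove those theorems, and your sketch underestimates what that requires. The first concrete error is the ``classical identification'' $\Ann_{K[x]}(f)=\Ann_K(\fc_f)[x]$: McCoy's theorem (the result quoted from \cite{Nor}) only says that a nonzero annihilator contains a nonzero \emph{constant}, and the stated equality is false in general --- for $K=k[a,b]/(a^{2},b^{2})$ and $f=a+bx$ one has $(a-bx)f=0$, while $\Ann_K(\fc_f)[x]=(ab)[x]$ does not contain $a-bx$. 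Consequently your four-term sequence is wrong, and the claim that flatness forces $\fa\cap\fc_f=\fa\,\fc_f$ for every ideal $\fa\subseteq K$ is unjustified (it is true only a posteriori, once $\fc_f$ is known to be generated by an idempotent). The repair works inside $K[x]$: flatness of $A$ together with freeness of $K[x]$ gives $\fa K[x]\cap fK[x]=\fa\, fK[x]$ for every ideal $\fa$; taking $\fa=\fc_f$ and noting $f\in\fc_f K[x]\cap fK[x]$ yields $f=gf$ with $\fc_g\subseteq\fc_f$, whence $\fc_f=\fc_{gf}\subseteq\fc_g\fc_f\subseteq\fc_f^{2}$, and then your finitely-generated-idempotent-ideal step does finish. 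In the converse direction of (4), the factor $K_1[x]/(ef)$ is \emph{not} ``manifestly flat'': that a polynomial of unit content gives a flat quotient is the nontrivial half of Ohm--Rush and needs, e.g., Dedekind--Mertens ($\fc_{ef}=K_1$ forces $\fc_{g\cdot ef}=\fc_g$, which verifies the intersection criterion above).

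For parts (1)--(3) the easy implications (monic $\Rightarrow$ free, and the Cayley--Hamilton endgame) are fine, but the two genuinely hard points are missing. First, your exclusion of $K_2\neq 0$ is garbled: $K_2[x]$ is free over $K_2$, not over $K$, and a direct factor of a free module need not be free, so what must actually be ruled out is that $K_1[x]/(ef)$ be free of countably infinite rank over $K_1$; this, and more generally the finiteness of the rank (equivalently of $I$), is precisely the theorem of the cited papers --- your parenthetical ``via a direct argument about polynomial generation'' is the whole difficulty, not a detail. Second, (2) and (3) do not ``follow by localization'': projectivity of a module that is not finitely generated is not a local condition (locally free of locally constant rank does not imply projective), you never state what ``quasi-monic'' and ``almost quasi-monic'' mean, and the substance of Brewer--Montgomery is exactly the gluing of local monic generators of varying degrees into a single global generator of the required shape. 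As written, the proposal can be salvaged into a proof of part (4) along the corrected lines above, but parts (1)--(3) are asserted rather than proved.
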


\begin{proof}
Parts (\ref{part0}), (\ref{part1'}) and (\ref{part1}) can be found in \cite{BM}, where also the terminology 
``(almost) quasi--monic'' is introduced, while part (\ref{part2}) is contained in \cite[Thm.~4.3]{OR}.
\end{proof}

\begin{examples}
The polynomial $f= 15 x^{d} +10x + 6$, for $d\geqslant 2$, is quasi-monic over $K=\ZZ/30\ZZ$.
The polynomial $f= 15 x^{d} + 6$, for $d\geqslant 1$, is almost quasi-monic, but not quasi-monic
over $K=\ZZ/30\ZZ$. The ring $A=\ZZ[x]/(2x-1)\cong \ZZ[\tfrac{1}{2}]$ is flat as module over $K=\ZZ$, 
but not projective.
\end{examples}

\begin{sit}
In each of the cases considered in Proposition \ref{prop:one}, we have a decomposition 
$K=K_{1}\times K_{2}$ and $A\cong A_{1}\times A_{2}$, where $A_{1}=K_{1}[x]/(\overline f)$ with $\overline f$ a 
non-zero-divisor in $K_{1}[x]$ and $A_{2}=K_{2}[x]$ a polynomial ring.
Moreover, as algebras,
\[
\Ext_{A^{\ev}}(A,A)\cong \Ext_{A^{\op}_{1}\otimes_{K_{1}}A_{1}}(A_{1},A_{1})\times 
\Ext_{A_{2}^{\op}\otimes_{K_{2}}A_{2}}(A_{2},A_{2})\,,
\] 
with the second factor satisfying
\[
\Ext_{A_{2}^{\op}\otimes_{K_{2}}A_{2}}(A_{2},A_{2}) \cong K_{2}[x][0]\oplus K_{2}[x]\tfrac{\partial}{\partial x}[-1]
\]
with obvious multiplicative structure. 
\end{sit}
We will therefore now concentrate on the case that the given polynomial
$f\in K[x]$ is a non-zero-divisor.

\begin{sit}
\label{depth2}
If $f$ is a non-zero-divisor, the natural map $A\to I/I^{2}, 1\mapsto f\bmod I,$ is an isomorphism, and
$0\to P\xto{f} P\to A\to 0$ is a free resolution of $A$ both over $P$ and $K$, so that
$\Tor^{K}_{i}(A,A)=0$ for $i\geqslant 2$ and $\bigwedge_{A}I/I^{2}\to \Tor^{P}(A,A)$ is 
trivially an isomorphism.

In other words, $I=(f)\subseteq P$ is a regular ideal. The only missing piece for $A$ to be
a homological complete intersection over $K$ is thus the vanishing of $\Tor^{K}_{1}(A,A)$.
That vanishing is equivalent to $f\otimes 1\in P\otimes_{K} A$ being a non-zero-divisor,
in turn equivalent to $1\otimes f, f\otimes 1$ forming a regular sequence in $P\otimes_{K} P$.

In light of \cite[5.~Exercise 9]{Nor}, the sequence $1\otimes f, f\otimes 1$ is regular in 
$P\otimes_{K} P$ if, and only if, $\Ext^{i}_{K}(K/\fc,K)=0$ for $i=0,1$, that is, the {\em depth\/} of the
content ideal $\fc$ on $K$ {\em is at least\/} $2$.
\end{sit}

\begin{example}
Let $R[s,t,u,v]$ be polynomial ring over a commutative ring $R$,
and set $K=R[s,t,u,v]/(sv-tu)$. For $d\geqslant 1$, the polynomial $f(x) = sx^{d}+v\in K[x]$ has content ideal
$\fc_{f} = (s,v)$ of depth $2$, while $g(x)= sx^{d} +u\in K[x]$ has content ideal $\fc_{g}=(s,u)$ of depth $1$.

Both $f,g$ are thus non-zero-divisors in $K[x]$, but only $f$ defines a homological complete intersection 
$A=K[x]/(f)$ over $K$.
\end{example}

\begin{sit}
\label{cyclictate}
Constructing the beginning of the Tate resolution of the multiplication map 
$\mu:A^{\ev}\cong K[x',x'']/(f(x'),f(x''))\to A, \mu(x')=\mu(x'')=x$, as in Corollary \ref{cor:tate}, one obtains
$\calt^{(2)}$ as a $2$--periodic complex, augmented versus $A$ in degree $0$, 
\begin{align*}
\xymatrix{
0&\ar[l]A&\ar[l]0&\ar[l]0&\ar[l]0&\ar[l]\cdots&\equiv& A\\
0\ar[u]&\ar[u]^{\mu}\ar[l]A^{\ev}&\ar[u]\ar[l]_-{x''-x'}A^{\ev}&\ar[u]\ar[l]_-{\Delta}A^{\ev}&
\ar[u]\ar[l]_-{x''-x'}A^{\ev}&\ar[l]\cdots
&\equiv&\calt^{(2)}\ar[u]_{\epsilon}
}
\end{align*}
where $\Delta\in A^{\ev}$ is the residue class of the  {\em unique\/} polynomial 
$\Delta(x', x'')\in K[x',x'']$ that satisfies $f(x'')-f(x')=(x''-x')\Delta(x',x'')$.
\end{sit}

Summarizing the discussion so far, we have the following characterizations, where we refer to \cite{ABS, BS}
for the terminology and properties of {\em exact zero divisors}. 
\begin{theorem}
\label{hci1}
If $f=a_{0}x^{d}+\cdots +a_{d}$ is a non-zero-divisor in $K[x]$, then the following are equivalent.
\begin{enumerate}[\rm(1)]
\item 
\label{parta}
$A=K[x]/(f)$ is a {\em homological complete intersection\/} over $K$.
\item
\label{partb}
$\calt^{(2)}$ resolves $A$ as $A^{\ev}$--module.
\item
\label{partc}
$(x''-x', \Delta)$ is  a pair of {\em exact zero-divisors\/} in $A^{\ev}$.
\item
\label{partd} 
The content ideal $\fc$ of $f$ has {\em grade at least $2$}, in that $\Hom_{K}(K/\fc, K)=0=\Ext^{1}_{K}(K/\fc,K)$.
\end{enumerate}
Moreover, $A$ will be flat over $K$ if, and only if the content ideal of $f$ is the {\em unit ideal\/} in $K$. 
\end{theorem}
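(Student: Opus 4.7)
The plan is to thread through the four conditions using the preparatory material, each link of which is already essentially present in the excerpt: \ref{depth2} delivers (1) $\Leftrightarrow$ (4), Proposition \ref{t2} delivers (1) $\Leftrightarrow$ (2), and \ref{cyclictate} delivers (2) $\Leftrightarrow$ (3); the moreover follows from Proposition \ref{prop:one}(\ref{part2}).

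First I would address (1) $\Leftrightarrow$ (4). Because $f$ is a non-zero-divisor, the complex $0 \to P \xrightarrow{f} P \to A \to 0$ is simultaneously a free resolution over $P$ and over $K$. The first shows $I/I^{2} \cong A$ is $A$-free and $\bigwedge_{A}(I/I^{2}) \to \Tor^{P}(A,A)$ is an isomorphism, making condition (b) of homological complete intersection automatic; the second shows $\Tor^{K}_{i}(A,A) = 0$ for $i \geq 2$. So (1) collapses to $\Tor^{K}_{1}(A,A) = 0$, equivalently to $(1 \otimes f,\, f \otimes 1)$ being a regular sequence in $P \otimes_{K} P$, which by Northcott's criterion \cite[5.~Ex.~9]{Nor} is the vanishing $\Ext^{i}_{K}(K/\fc, K) = 0$ for $i = 0, 1$, precisely condition (4). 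This is essentially the argument laid out in \ref{depth2}.

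Next, (1) $\Leftrightarrow$ (2) is immediate from Proposition \ref{t2}: beyond condition (a), what is required is that $I/I^{2}$ be $A$-free with basis matched by $G$ and that $\bigwedge_{A}(I/I^{2}) \to \Tor^{P}(A,A)$ be an isomorphism, and all of these hold automatically when $f$ is a non-zero-divisor (take $G$ of rank one). The equivalence (2) $\Leftrightarrow$ (3) is a direct unpacking: \ref{cyclictate} exhibits $\calt^{(2)}$, after augmentation by $\mu$, as the $2$-periodic complex over $A^{\ev}$ whose differentials alternate between multiplication by $x'' - x'$ and by $\Delta$. By definition \cite{ABS, BS}, $(x'' - x',\, \Delta)$ is a pair of exact zero-divisors in $A^{\ev}$ precisely when this complex is exact, i.e., when it resolves $A$.

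Finally, for the moreover clause, Proposition \ref{prop:one}(\ref{part2}) says $A$ is flat over $K$ iff $\fc = (e)$ for some idempotent $e \in K$; under the non-zero-divisor hypothesis we have $\ann_K(\fc) = (1-e) = 0$, forcing $e = 1$ and $\fc = K$, while conversely $\fc = (1)$ is generated by the idempotent $1$. The only real obstacle I anticipate is bookkeeping, namely checking that the specific $\Delta$ from \ref{cyclictate} (the Taylor quotient of $f(x'') - f(x')$ by $x'' - x'$) is indeed the partner appearing in the definition of exact zero-divisor pairs in \cite{ABS, BS}, and that Northcott's criterion applies without additional hypothesis on $K$; both are straightforward once the definitions are unwound, and no essentially new argument is needed beyond synthesizing the earlier results.
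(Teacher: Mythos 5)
Your proposal is correct and follows essentially the same route as the paper's own proof: (1)$\Leftrightarrow$(2) via Proposition \ref{t2}, (2)$\Leftrightarrow$(3) as the tautological unwinding of the exact zero-divisor definition against the $2$--periodic complex of \ref{cyclictate}, the content-ideal condition via the Northcott argument of \ref{depth2}, and the flatness claim via Proposition \ref{prop:one} combined with the non-zero-divisor hypothesis. The only (immaterial) difference is bookkeeping: you attach the \ref{depth2} discussion to the link (1)$\Leftrightarrow$(4), whereas the paper cites it for (3)$\Leftrightarrow$(4); since the remaining links are shared, the resulting chain of equivalences is the same.
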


\begin{proof}
The equivalence (\ref{parta})$\Longleftrightarrow$ (\ref{partb}) is a special case of Proposition \ref{t2}, while
(\ref{partb})$\Longleftrightarrow$ (\ref{partc}) is a tautology, in that $(x''-x', \Delta)$ is, by definition, a 
pair of exact zero-divisors in $A^{\ev}$ if, and only if, the ideals $(x''-x')$ and $(\Delta)$ are each 
other's annihilators in $A^{\ev}$, if, and only if, the $2$--periodic complex $\calt^{(2)}$ displayed above is exact.
The equivalence (\ref{partc})$\Longleftrightarrow$ (\ref{partd}) has been discussed in \ref{depth2}
above.

The final claim follows from Proposition \ref{prop:one} in that flatness means that $\fc_{f}$ is generated by an
idempotent. If that idempotent is not $1$, then $f$ is not a non-zero-divisor in $K[x]$.
\end{proof}

Assume for the remainder of this section that $f$ and $A$ satisfy the equivalent conditions in Theorem
\ref{hci1}. We determine the algebra $\Ext^{\bdot}_{A^{\ev}}(A,A)$.

\begin{sit}
Call $A'=K[x]/\left(f(x),\partial_{x} f\right)$, the {\em ramification algebra\/} of $f$ and 
introduce as well the annihilator of $\partial_{x}f$ in $A$, given by 
\[
\theta=\left(f:_{P}\partial_{x}f\right)/(f)\subseteq A\,.
\] 
Note that $\theta(\partial_{x}f)=0$ in $A$, whence $\theta$ is naturally an $A'$--module. 

In fact, $\theta \cong \Hom_{A}(A', A)$, as dualizing the exact sequence
\[
\xymatrix{
0\ar[r]&\theta\ar[r]&A\ar[r]^-{\partial_{x}f}&A\ar[r]&A'\ar[r]&0
}
\]
of $A$--modules shows. The same sequence can be read to identify $\theta\cong\Der_{K}(A,A)$ and 
$A'\cong T^{1}_{A/K}$, the first Andr\'e--Quillen (tangent) cohomology of $A$ over $K$.
\end{sit}

\begin{theorem}
\label{mult}
Assume $f\in K[x]$ is a polynomial whose content ideal $\fc$ is of depth at least two.

The Yoneda $\Ext$--algebra of $A=K[x]/(f)$ as $A^{\ev}$--module is then graded commutative
and satisfies
\begin{align*}
\Ext^{\bdot}_{A^{\ev}}(A,A)\cong A\times_{A'}\frac{(A'\oplus\theta t)[s]}{\left((at)(bt)-abf^{(2)}s;
a,b\in\theta\right)}\,,
\end{align*}
where $A, A'$ and $\theta$ are in degree zero, $t$ is of degree $1$, while $s$ is {\em central\/} of degree $2$,
and $f^{(2)}$ denotes the residue class of the second divided derivative of $f$ in $A'$.

In particular, the {\em even\/} Yoneda $\Ext$--algebra 
$\Ext^{\text{\rm even}}_{A^{\ev}}(A,A)=\oplus_{i\gs 0}\Ext^{2i}_{A^{\ev}}(A,A)$
is the fibre product of the ring $A$ over $A'$ with the polynomial ring $A'[s]$ in one variable of degree $2$,
\[
\Ext^{\text{\rm even}}_{A^{\ev}}(A,A) \cong A\times_{A'}A'[s]\,.
\]
\end{theorem}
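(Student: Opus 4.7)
The plan is to invoke Theorem \ref{thm:ci} in the small case $n=c=1$ and then read off both the cohomology and its multiplicative structure of the resulting DG Clifford algebra essentially by inspection, finally recognizing the answer as a fibre product.

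By Theorem \ref{hci1}, the hypothesis that $\fc_{f}$ has depth at least two makes $A=K[x]/(f)$ a homological complete intersection whose conormal module $I/I^{2}\cong A\cdot f$ is free of rank one. Theorem \ref{thm:ci} therefore produces the isomorphism of graded algebras
\begin{align*}
\Ext^{\bdot}_{A^{\ev}}(A,A)\cong H^{\bdot}\bigl(\Cliff(q),\partial_{\jac_{f}}\bigr)\,,
\end{align*}
with $\Cliff(q)=A\langle t,s\rangle$, the generator $t$ in degree $1$, the generator $s$ central of degree $2$, and structural data
\begin{align*}
t^{2}=f^{(2)}s\,,\qquad \partial(t)=(\partial_{x}f)\,s\,,\qquad \partial(s)=0\,,
\end{align*}
where $\partial_{x}f$ and $f^{(2)}$ denote the residue classes in $A$ of $\partial f/\partial x$ and of the second divided derivative of $f$. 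As a graded $A$-module, $\Cliff(q)$ has $A$-basis $\{s^{i}\}_{i\gs 0}$ in even degrees and $\{ts^{i}\}_{i\gs 0}$ in odd degrees, and the only nonzero piece of the differential is $ats^{i}\mapsto a(\partial_{x}f)s^{i+1}$.

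The cohomology is then immediate:
\begin{align*}
H^{0}=A,\qquad H^{2i+1}=\theta\cdot ts^{i}\ (i\gs 0),\qquad H^{2i}=A'\cdot s^{i}\ (i\gs 1),
\end{align*}
using that $\theta=\ann_{A}(\partial_{x}f)$ and $A'=A/(\partial_{x}f)$. Graded commutativity of the resulting Ext-algebra is automatic from Gerstenhaber's theorem, equivalently from the constructions of Section 4. The multiplicative structure is induced from the Clifford product, so I only need to record three observations: (i) $s$ is central and raises degree by two; (ii) the action of $A=H^{0}$ on every positive-degree piece factors through the quotient $A\twoheadrightarrow A'$, because $\partial_{x}f$ both annihilates $\theta$ and becomes a boundary when multiplied into any $s^{i}$; and (iii) the only further nontrivial product is $(at)(bt)=ab\,t^{2}=ab\,f^{(2)}s$ for $a,b\in\theta$, read directly from $t^{2}=f^{(2)}s$. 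Together, (i)--(iii) exhibit the strictly-positive-degree part of $\Ext^{\bdot}_{A^{\ev}}(A,A)$ as the graded $A'$-algebra
\begin{align*}
R\ =\ \frac{(A'\oplus\theta t)[s]}{\bigl((at)(bt)-ab\,f^{(2)}s\,:\,a,b\in\theta\bigr)}\,,
\end{align*}
while the compatibility in degree zero, where $H^{0}=A$ surjects onto $R^{0}=A'$, is precisely the datum defining the fibre product $A\times_{A'}R$. The even-degree statement follows by discarding the generator $\theta t$, whereupon the relation becomes vacuous and $R^{\text{even}}=A'[s]$.

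The main conceptual obstacle is ensuring that the product induced on cohomology really is the Clifford one and not some ``untwisted'' Koszul product; this is exactly what Theorem \ref{thm:ci} guarantees, so once that result is in hand, the present theorem reduces to the direct computation of cohomology and of three multiplication rules sketched above.
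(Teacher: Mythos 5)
Your proposal is correct and follows essentially the same route as the paper: the paper's proof likewise just observes that the displayed algebra is the cohomology of the DG Clifford algebra $A\langle t,s\rangle$ with $t^{2}=f^{(2)}s$, $\partial(t)=(\partial_{x}f)s$, $\partial(s)=0$, and then applies Theorem \ref{thm:ci} in the case $n=c=1$ (with the hypotheses supplied, as you note, by Theorem \ref{hci1} and the freeness of $I/I^{2}$). Your explicit computation of the cohomology and of the product rules simply fills in details the paper leaves to the reader.
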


\begin{proof}
Just note that the algebra displayed is indeed the cohomology algebra of the DG Clifford algebra
$A\langle t,s\rangle$ with relations $t^{2}=f^{(2)}s, ts=st$, and differential 
$\partial(t)=(\partial_{x}f)s, \partial s =0$. Now apply Theorem \ref{thm:ci}.
\end{proof}

\begin{remark}
\label{2torsion}
While we know from graded commutativity of the algebra that the classes $abf^{(2)}$, for $a,b\in\theta$ 
must be $2$--torsion in $A'$, we can do better, in that these classes are already $2$--torsion in $A$.

To see this, observe that $2f^{(2)}=\partial^{2}f/\partial x^{2}$, the ordinary second derivative of $f$, 
and use that $a\partial_{x}f=uf, b\partial_{x}f=vf$ for suitable polynomials $u,v\in K[x]$, because
$a,b\in\theta$. One then obtains
\begin{align*}
ab\frac{\partial^{2}f}{\partial x^{2}}&= a\left(\partial_{x}(b\partial_{x}f)-
\partial_{x} b\cdot\partial_{x}f\right)
&&\text{by the product rule,}\\
&=a\partial_{x} (vf)-a\partial_{x} b\cdot \partial_{x}f
&&\text{using $b\partial_{x}f=vf$,}\\
&=a(\partial_{x} v) f+av\partial_{x}f- a\partial_{x} b\cdot \partial_{x}f
&&\text{by the product rule again,}\\
&=\left(a\partial_{x} v +uv-u\partial_{x} b\right)f
&&\text{using $a\partial_{x}f=uf$,}\\
&\equiv 0\bmod f.
\end{align*}
\end{remark}

\begin{cor}
If $2$ is a non-zero-divisor in $A=K[x]/(f)$, for $f$ a polynomial in $K[x]$ with content ideal of depth at least two, 
then $\Ext^{\bdot}_{A^{\ev}}(A,A)$ is strictly graded commutative.\qed
\end{cor}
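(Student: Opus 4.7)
The plan is to combine the explicit description of $\Ext^{\bdot}_{A^{\ev}}(A,A)$ furnished by Theorem~\ref{mult} with the sharpening of the $2$-torsion observation in Remark~\ref{2torsion}. By Gerstenhaber's theorem the ring is already graded commutative, so the only thing left to verify is that every homogeneous odd-degree class squares to zero.

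First I would describe an arbitrary homogeneous odd-degree cohomology class. Working in the DG Clifford presentation $A\langle t, s\rangle/(t^{2}-f^{(2)}s,\, ts-st)$ with differential $\partial t = (\partial_{x} f)s$ and $\partial s = 0$, which computes $\Ext^{\bdot}_{A^{\ev}}(A,A)$ by Theorem~\ref{thm:ci} (equivalently by the cyclic-extension version in Theorem~\ref{mult}), an $A$-basis is furnished by the monomials $s^{k}$ and $ts^{k}$ for $k \geqslant 0$. The differential vanishes on every even-degree basis element and sends $ats^{k}$ to $a(\partial_{x} f)s^{k+1}$, so the odd-degree cocycles in degree $2k+1$ are exactly $\{ats^{k} : a \in \theta\}$, and no odd-degree cocycle is a coboundary. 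Hence every homogeneous $\xi \in \Ext^{2k+1}_{A^{\ev}}(A,A)$ is represented by $ats^{k}$ for some $a \in \theta$.

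The second step is a direct computation in the Clifford algebra. Using centrality of $s$, centrality of elements of $A$, and the relation $t^{2}=f^{(2)}s$,
\[
\xi^{2} \;=\; (ats^{k})^{2} \;=\; a^{2}t^{2}s^{2k} \;=\; a^{2}f^{(2)}\, s^{2k+1}.
\]
The third and decisive step is to invoke Remark~\ref{2torsion}, which refines the formal consequence of graded commutativity to the sharper statement that $a^{2}f^{(2)}$ is already $2$-torsion in $A$ itself (not merely in the quotient $A'$). Under the standing hypothesis that $2$ is a non-zero-divisor in $A$, this forces $a^{2}f^{(2)}=0$ in $A$ and hence also in $A'$, so $\xi^{2}=0$ in $\Ext^{4k+2}_{A^{\ev}}(A,A)$.

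The potentially delicate point, namely the improvement from ``$2$-torsion in $A'$'' to ``$2$-torsion in $A$'', is exactly what Remark~\ref{2torsion} supplies; with that input in hand the present corollary reduces to the elementary Clifford-algebra calculation above and presents no further obstacle.
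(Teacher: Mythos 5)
Your argument is correct and is exactly the route the paper intends: the corollary is stated as an immediate consequence of the explicit Clifford-algebra description in Theorem~\ref{mult} together with Remark~\ref{2torsion}, which is precisely how you kill the squares $a^{2}f^{(2)}s^{2k+1}$ of the odd classes $ats^{k}$, $a\in\theta$. The only cosmetic point is that graded commutativity here is that of $\Ext^{\bdot}_{A^{\ev}}(A,A)$ as established in the paper (Theorem~\ref{mult} via the diagonal approximation), rather than Gerstenhaber's theorem for Hochschild cohomology, since $A$ need not be projective over $K$.
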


\begin{example}[Inspired by \cite{Rob, DIo}]
The foregoing Corollary fails in more than one variable. Consider $A=\ZZ[x,y]/(x^{2}-4xy +y^{2}-1)$ 
as a $\ZZ$--algebra. 
As $f=x^{2}-4xy +y^{2}-1$ is monic in $y$ of degree $2$, the $\ZZ[x]$--module $A$ is free of rank $2$, in particular, 
$2$ is a non-zero-divisor on $A$ and $A$ is projective over $\ZZ$.

The Euler identity yields 
$x\tfrac{\partial f}{\partial x} + y\tfrac{\partial f}{\partial y}=2f+2\equiv 2$ in $A$, whence the Jacobi ideal of 
$A$ over $\ZZ$ is $(2x - 4y, -4x+2y)= 2A$, and so
$T^{1}_{A/\ZZ} \cong A/(2A)\cong \ZZ_{2}[x,y]/((x+y+1)^{2})$ is a nonzero $2$--torsion $A$--module.

For $D=
\frac{1}{2}\left(\frac{\partial f}{\partial y}\frac{\partial}{\partial x} - \frac{\partial f}{\partial x}\frac{\partial}{\partial y}\right) 
= (2x-y)\frac{\partial}{\partial x}+(x-2y)\frac{\partial}{\partial y}$ one verifies that $D(f)=0$,
already in $\ZZ[x,y]$. Using Theorem \ref{ThmA}, one finds 
\begin{align*}
q(D)(f) &= (2x-y)^{2}\frac{\partial^{(2)}f}{\partial x^{2}} + (2x-y)(x-2y)\frac{\partial^{2}f}{\partial x\partial y}
+(x-2y)^{2}\frac{\partial^{(2)}f}{\partial y^{2}}\\
&= (2x-y)^{2}-4 (2x-y)(x-2y)+(x-2y)^{2}\\
&\equiv y^{2} + x^{2} \equiv 1 \bmod(f,2) \,,
\end{align*}
whence $q(D)(f)$ is not $2$--torsion in the normal module $N_{A/P}\cong A$.

Further, $D\cup D\equiv 1\pmod {2A}$ does not vanish in $T^{1}_{A/K}$, in fact, the squaring map
sends $\Ext^{1}_{A^{\ev}}(A,A)\cong \Der_{\ZZ}(A,A)$ surjectively onto $T^{1}_{A/K}$,
and one may verify easily that further $T^{1}_{A/K}\cong \Ext^{2}_{A^{\ev}}(A,A)\cong \Hoch^{2}(A/K,A)$.
\end{example}

We finish this section by looking at some classical situations.
\subsection*{The Separable or Unramified Case} Recall that a polynomial $f(x)\in K[x]$ 
is {\em separable\/} over $K$, if its derivative $\partial_{x}f$ is a unit modulo $f$, equivalently,
$(f,\partial_{x}f)=K[x]$. In that case, the content ideal $\fc_{f}$ necessarily equals $K$, 
whence $f$ is a non-zero-divisor, $A=K[x]/(f)$ is {\em flat\/} over $K$, and Theorem \ref{mult} applies.
We retrieve that all higher extension groups vanish,  $\Ext^{>0}_{A^{\ev}}(A,A)=0$, as it should be.

\subsection*{The Generically Unramified Case} The polynomial $f$ is 
{\em generically unramified}, if it is a non-zero-divisor in $K[x]$ and its derivative 
$\partial_{x}f$ is a non-zero-divisor modulo $f$. Assuming furthermore that $A$ 
is flat over $K$, equivalently, that the content of $f$ is the unit ideal, Theorem \ref{mult} applies and
shows that $f$ is generically unramified if, and only if, $\Ext^{1}_{A^{\ev}}(A,A)=0$, equivalently,
all odd extension groups $\Ext^{2i+1}_{A^{\ev}}(A,A)$ vanish. In that case, the Yoneda 
$\Ext$--algebra is thus reduced to its even part, the fibre product of $A$ over $A'$
with a polynomial ring $A'[z]$ generated in cohomological degree $2$. 

There is no room for a non-zero squaring map and so the $\Ext$--algebra is strictly graded commutative. 

\subsection*{The Totally Ramified Case} The polynomial $f$ is 
{\em totally ramified}, if it is a non-zero-divisor in $K[x]$, but its derivative 
$\partial_{x}f$ is zero modulo $f$. Equivalently, $A'=A$. 

Assuming that the content of $f$ is the unit ideal, Theorem \ref{mult} shows that
the Yoneda algebra $\Ext^{\bdot}_{A^{\ev}}(A,A)$ is the Clifford algebra over the
$A$--quadratic map $q:At\to As, q(t)= f^{(2)}(x)s$. If $f^{(2)}(x)=0$ in $A$, as happens, for example, 
by Remark \ref{2torsion} if $2$ is a non-zero-divisor in $A$, then the Yoneda algebra is the tensor 
product of the exterior algebra over $A$ generated by $t$ in degree $1$ with the symmetric algebra 
over $A$ generated by $s$ in degree $2$.

In contrast, if $f^{(2)}(x)$ is a unit in $A$, whence $A$ is necessarily of characteristic $2$ 
by Remark \ref{2torsion}, then the Yoneda algebra is the tensor algebra over $A$ on a single 
generator $t$ in degree $1$. This covers the case $A=K[x]/(x^{2})$ with $\car K=2$ mentioned in the Introduction.

\subsection*{Polynomials over a field} Last, but not least, let us consider the case when the
coefficient ring $K$ is a field. Then being a non-zero-divisor simply means that $f$ is not 
the zero polynomial. Trivially, $A$ is projective over $K$, thus the Yoneda $\Ext$--algebra
of self-extensions agrees with Hochschild cohomology.

Further, $K[x]$ being a principal ideal domain, we obtain explicit descriptions of $A'$ and 
$\theta$. Set $g=\gcd(f, \partial_{x}f)$ and $h=f/g$ in $K[x]$, so that $A'\cong K[x]/(g)$
and $\theta = (f(x):\partial_{x}f)/(f) \cong (h)/(f)$. Thus, $\theta$ is the principal ideal generated 
by $h$ in $K[x]/(f)$, isomorphic to $A'$ as $A$--module. Therefore, we obtain the following 
presentation 
\begin{align*}
\Hoch^{\bdot}(A/K)\cong \frac{K[x,y,z]}{\big(f, gy, gz, y^{2}+f^{(2)}h^{2}z\big)}
\end{align*}
of the Hochschild cohomology ring of $A=K[x]/(f)$ over $K$ as a quotient of a graded polynomial ring, 
where $x$ is of degree $0$, $y$ is the class of $h\partial_{x}$ of degree $1$, and $z$ is of degree $2$.

If $\car K\neq 2$, then necessarily $f^{(2)}h^{2}\equiv 0\pmod f$ by Remark \ref{2torsion}
and the description simplifies accordingly.

\begin{remark}
This last example encompasses Theorem 3.2, Lemma 4.1, Lemma 5.1, Theorem 5.2 and Theorem 6.2 from 
\cite{Holm}, completes the characteristic $2$ cases that were not handled there, and covers as well 
Theorem 3.9 from \cite{SuarezAlvarez}.
\end{remark}

\section{Cohomology of Finite Abelian Groups}
\begin{sit}
As our final example, we consider the Hochschild and ordinary cohomology of finite abelian groups.
If $G=\mu_{n_{1}}\times\cdots\mu_{n_{r}}$ is such a group, with $\mu_{n}$ the multiplicatively written
cyclic group of order $n$, we may assume that the orders of the factors satisfy
$2\leqslant n_{1}\mid n_{2}\mid\cdots\mid n_{r}$, so that the $n_{i}$ are the {\em elementary divisors\/} or 
{\em invariant factors\/} of the group.
\end{sit}

\begin{sit}
The group algebra on $G$ over a commutative ring $K$ is then given by
$A=KG \cong K[x_{1},..., x_{r}]/(x_{1}^{n_{1}}-1,..., x_{r}^{n_{r}}-1)$. It is a homological complete intersection over
$K$, thus, Theorem \ref{thm:ci} applies. However, because the variables $x_{i}$ become units in $A$,
one can simplify that result in this case, and recover as well the fact established in \cite{CiSo}
that there is an isomorphism of graded commutative algebras 
$\Hoch^{\bdot}(A/K)\cong A\otimes_{K}\Ext^{\bdot}_{A}(K,K)$, where $K$ is considered 
an $A$--module via the trivial representation of $G$.
\end{sit}

\begin{theorem}
Let $G=\mu_{n_{1}}\times\cdots\mu_{n_{r}}$ be a finite abelian group as above. The Hochschild cohomology
of $KG$ over $K$ is then the cohomology of the tensor product of differential graded Clifford algebras,
\begin{align*}
(C(G),\partial) &=KG\otimes_{K}\bigotimes_{j=1}^{r}
\left(\frac{K\langle \tau_{j}, \sigma_{j}\rangle}{(\tau_{j}^{2}-m_{j}\sigma_{j}, \tau_{j}\sigma_{j}-\sigma_{j}\tau_{j})}, 
\partial \tau_{j} = n_{j}\sigma_{j}, \partial \sigma_{j}=0
\right)\,,
\end{align*}
where
\begin{align*} 
m_{j}&=
\begin{cases}
0&\text{if $n_{j}$ is {\em odd},}\\
n_{j}/2&\text{if $n_{j}$ is {\em even}}\,,
\end{cases}
\end{align*}
and the (cohomological) degree of the $\tau_{j}$ is $1$, that of the $\sigma_{j}$ is $2$.

Further, the cohomology of the second factor in the tensor product is the group cohomology $H^{\bdot}(G,K)=
\Ext^{\bdot}_{KG}(K,K)$, where $K$ is the trivial $G$--representation, and one retrieves the 
isomorphism from \cite{CiSo} of graded commutative algebras
\begin{align*}
\Hoch^{\bdot}(KG/K)\cong KG\otimes_{K}H^{\bdot}(G,K)\,.
\end{align*}
\end{theorem}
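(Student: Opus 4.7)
The plan is to apply Theorem \ref{thm:ci} to the presentation $KG=K[x_1,\ldots,x_r]/(f_1,\ldots,f_r)$ with $f_j=x_j^{n_j}-1$, and then to reconcile the resulting DG Clifford algebra with $(C(G),\partial)$ at the cohomology level.

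First I would verify the hypotheses of Theorem \ref{thm:ci}: the group algebra $KG$ is $K$-free, so $\Tor^K_+(KG,KG)=0$ and $KG$ is $K$-projective; the $f_j$ form a Koszul-regular sequence in $P=K[x_1,\ldots,x_r]$; and $I/I^2$ is free over $KG$ on the classes $\{[f_j]\}$. In particular, $\Hoch^\bullet(KG/K)\cong\Ext^\bullet_{(KG)^{\ev}}(KG,KG)$ and Theorem \ref{thm:ci} applies.

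Second, I would unpack the resulting DG Clifford algebra. Since each $f_j$ depends only on $x_j$, the mixed partials $\partial f_j/\partial x_i$ and $\partial^{2}f_j/(\partial x_i\partial x_{i'})$ vanish for $i\neq i'$, and using $x_j^{n_j}=1$ in $KG$ one has $\partial f_j/\partial x_j\equiv n_j x_j^{-1}$ and $\partial^{(2)}f_j/\partial x_j^2\equiv\binom{n_j}{2}x_j^{-2}$. Theorem \ref{thm:ci} then yields
\[
\partial t_i=n_ix_i^{-1}s_i,\qquad t_it_{i'}+t_{i'}t_i=0\ (i\neq i'),\qquad t_i^2=\tbinom{n_i}{2}x_i^{-2}s_i,
\]
with $s_j$ central and $\partial s_j=0$. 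The vanishing of off-diagonal relations factors $\Cliff(q)$ as a tensor product $\Cliff_1\otimes_{KG}\cdots\otimes_{KG}\Cliff_r$ of two-variable DG subalgebras $\Cliff_i=KG\langle t_i,s_i\rangle$.

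Third, I would establish a graded-algebra isomorphism $H^\bullet(\Cliff_i)\cong KG\otimes_K H^\bullet(C_i)$, where $C_i$ denotes the $K$-DG factor of $C(G)$ in the $i$-th slot. At the level of underlying $KG$-modules this is immediate: since $x_i^{-1}$ is a unit in $KG$, multiplication by $n_ix_i^{-1}$ shares its kernel and cokernel with multiplication by $n_i$, yielding $H^0=KG$, $H^{2k}=KG/n_iKG$ for $k\geq 1$, and $H^{2k+1}=\Ann_{KG}(n_i)$. For the squaring, the arithmetic identity $\binom{n_i}{2}=m_i(n_i-1)\equiv -m_i\pmod{n_i}$ ensures that, after identifying the generator of $H^2(\Cliff_i)$ with a suitable unit multiple of $[s_i]$, the class $[t_i^2]$ matches $m_i\sigma_i$ modulo $n_iKG$. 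Assembling these identifications across the tensor product gives $H^\bullet(\Cliff(q))\cong H^\bullet(C(G))$, establishing the first claim.

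Finally, to recover $\Hoch^\bullet(KG/K)\cong KG\otimes_K H^\bullet(G,K)$, I note that $KG$ is $K$-flat, so $H^\bullet(C(G))\cong KG\otimes_K H^\bullet\bigl(\bigotimes_j C_j\bigr)$. Each $(C_j,\partial)$ is a DG model for $H^\bullet(\mu_{n_j},K)$: its cohomology computes directly to $K,\,K/n_jK,\,\Ann_K(n_j),\,K/n_jK,\ldots$, and the relation $\tau_j^2=m_j\sigma_j$ reproduces the classical Bockstein formula $y^2=(n_j/2)z$ in cyclic group cohomology. The DG tensor product $\bigotimes_j C_j$ then absorbs the Tor-corrections in the Künneth formula for $H^\bullet(G,K)$, recovering the Cibils--Solotar isomorphism.

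The main obstacle is the algebra-level matching in the third step: $\Cliff_i$ and $KG\otimes_K C_i$ are typically \emph{not} isomorphic as DG $K$-algebras, since the squaring relations $t_i^2=\binom{n_i}{2}x_i^{-2}s_i$ and $\tau_i^2=m_i\sigma_i$ differ by genuinely non-boundary elements of $KG$. Reconciling the cup products requires choosing cohomology generators that differ from the obvious $[s_i]$ by units of the form $x_i^{\pm k}$, and exploiting the congruence $\binom{n_i}{2}\equiv -m_i\pmod{n_i}$ --- an identity that becomes an equality only after modding out by the image of the Jacobian differential.
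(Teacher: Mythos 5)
Your reduction to Theorem \ref{thm:ci} and the computation of the structure constants ($\partial t_i = n_i x_i^{-1}s_i$, $t_i^2=\binom{n_i}{2}x_i^{-2}s_i$, mixed relations vanishing) are correct, and the unit rescaling $\tau_i=x_it_i$, $\sigma_i=x_i^{n_i}s_i$ does remove the $x_i$--factors. The genuine gap is in your third step, and you have in fact located it yourself without closing it: after the rescaling you must compare two \emph{different} multiplications on the \emph{same} underlying complex, one with $\tau_i^2=\binom{n_i}{2}\sigma_i$ and one with $\tau_i^2=m_i\sigma_i$, and you propose to identify their cohomology rings using only the congruence $\binom{n_i}{2}\equiv m_i \pmod{n_i}$ together with a choice of generators. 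That congruence controls the squares of the degree-one generators, but it does not show that the identity on representatives is multiplicative on all of cohomology. For arbitrary cocycles the discrepancy between the two products is a sum of terms of the form $k_in_i\,\sigma_i\cdot w$ in which $w$ may still contain $\tau$--factors; such a term is a boundary only up to $\tau_i\,\partial(w)$--corrections, and the coefficients of a cocycle need not be $n_i$--torsion individually (only certain sums are, e.g.\ $\tau_1\sigma_2-\tau_2\sigma_1$ is a cocycle with unit coefficients), so neither "torsion kills it" nor "it is visibly $\partial(\tau_i\cdot(\text{$\sigma$--monomial}))$" applies term by term. What is missing is the homotopy between the two multiplications, and this is exactly what the paper supplies: it does not compare cohomologies of two abstract DG algebras at all, but modifies the co-unital diagonal approximation itself, per Theorem \ref{diagadapted}, by adding $\partial\eta$ with the explicit $\eta=\tfrac{n-1}{2}(x')^{-1}\diff x'\diff f''$ (resp.\ $\tfrac{n}{2}(x')^{-1}\diff x'\diff f''$), so that already at the cochain level the cup product satisfies $t^2=mx^{n-2}s$; since any two co-unital diagonal approximations compute the Yoneda product, no cohomology-level matching is needed. (The paper also works one cyclic factor at a time via \ref{cyclictate} and Proposition \ref{deg2} and then tensors the models and their adapted approximations, rather than invoking Theorem \ref{thm:ci} globally; that difference is harmless.)

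Your final paragraph on recovering $\Hoch^{\bdot}(KG/K)\cong KG\otimes_K H^{\bdot}(G,K)$ is also only a sketch: asserting that each $C_j$ is "a DG model for $H^{\bdot}(\mu_{n_j},K)$" and that the tensor product "absorbs the Tor-corrections in the K\"unneth formula" does not produce the required isomorphism of \emph{algebras} over an arbitrary coefficient ring $K$ with torsion. The paper constructs an explicit comparison homomorphism by applying $K\otimes_{KG}(-)$ to the Tate model, and then proves it bijective either by exhibiting a Tate model for the augmentation $KG\to K$ whose dual is the second tensor factor, or by invoking the splitting of \cite{SWi}; some argument of this kind is needed in your write-up as well.
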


\begin{proof}
Note that $A=KG\cong K\mu_{n_{1}}\otimes_{K}\cdots\otimes_{K} K\mu_{n_{r}}$ and that accordingly 
the multiplication map $\mu:A^{\ev}\to A$ is the tensor product over $K$ of the multiplication maps 
$\mu_{j}:(K\mu_{j})^{\ev}\to K\mu_{j}$.

It follows that a Tate model for $A$ over $K$ is obtainable as tensor product of the corresponding Tate models of
the cyclic groups involved. Thus, it remains to discuss the Tate model of a single cyclic group, say $\mu_{n}$,
for some natural number $n$. The group algebra $K\mu_{n}\cong K[x]/(x^{n}-1)$ is a cyclic extension of $K$,
free as $K$--module. Therefore, we can apply \ref{cyclictate} with $f=x^{n}-1 \in K[x]$ to obtain the Tate model
$\calt^{(2)}=\calt$ as a $2$--periodic complex, augmented versus $K\mu_{n}$ in degree $0$, 
\begin{align*}
\xymatrix{
0&\ar[l]K\mu_{n}&\ar[l]0&\ar[l]0&\ar[l]0&\ar[l]\cdots&\equiv\quad K\mu_{n}\\
0\ar[u]&\ar[u]^{\mu}\ar[l]{K\mu_{n}}^{\ev}&\ar[u]\ar[l]_-{x''-x'}{K\mu_{n}}^{\ev}&\ar[u]\ar[l]_-{\Delta}{K\mu_{n}}^{\ev}&
\ar[u]\ar[l]_-{x''-x'}{K\mu_{n}}^{\ev}&\ar[l]\cdots&\equiv\quad\calt^{(2)}\ar[u]_{\epsilon}
}
\end{align*}
where $\Delta\in {K\mu_{n}}^{\ev}$ is the residue class of the unique polynomial
\[
\Delta=\Delta(x',x'')=\frac{(x'')^{n}-(x')^{n}}{x''-x'}=\sum_{i=0}^{n-1}(x'')^{i}(x')^{n-1-i}\in K[x',x'']
\] 
that satisfies $f(x'')-f(x')=(x''-x')\Delta(x',x'')$ in $K[x',x'']$. Written as DG algebra,
\begin{align*}
\calt = {K\mu_{n}}^{\ev}\langle \diff x, \diff f\rangle \cong{\bigwedge}_{{K\mu_{n}}^{\ev}}(\diff x)
\otimes_{{K\mu_{n}}^{\ev}}
\Gamma_{{K\mu_{n}}^{\ev}}(\diff f)\,,
\end{align*}
with $\diff x$ in (homological) degree $1$ and $\diff f$ in degree $2$ and differential
$\partial(\diff f)=\Delta(x',x'')\diff x$ and $\partial(\diff x)=x''-x'$.

Further, the co-unital diagonal approximation $\Phi:\calt\to \calt\otimes_{K\mu_{n}}\calt$ from Proposition
\ref{deg2} is determined by
\begin{align*}
\Phi_{1}(\diff x)&= \diff x' + \diff x''\\
\Phi_{2}(\diff f)&=  \diff x' + \diff x'' -\sum_{p+q+r=n-2}(x')^{p}(\diff x') x^{q} (\diff x'') (x'')^{r}
\end{align*}
in our earlier notation. Next we adapt this co-unital diagonal approximation using 
Theorem \ref{diagadapted}. Setting $x=x'=x''$, we have
\begin{align*}
\Phi_{2}(\diff f) \equiv \diff x' + \diff x'' -\binom{n}{2}x^{n-2}\diff x'\diff x''\bmod (x''-x,x-x') \,.
\end{align*}
Now the form $\diff x'\diff f''\in \calt\otimes_{K\mu_{n}}\calt$ of degree $3$ is in the kernel of both 
augmentation maps $\epsilon_{1,2}$ and satisfies
\begin{align*}
\partial(\diff x'\diff f'') &= (x-x')\diff f'' -\diff x'\Delta(x,x'')\diff x''\\
&\equiv n x^{n-1}\diff x'\diff x''\bmod (x''-x,x-x')\,.
\end{align*}
As $x$ is a unit in $K\mu_{n}$, we can adjust $\Phi_{2}$ by adding $\partial\eta$, where
\begin{align*}
\eta = 
\begin{cases}
\frac{n-1}{2}(x')^{-1}\diff x'\diff f''&\text{if $n$ is {\em odd},}\\
\frac{n}{2}(x')^{-1}\diff x'\diff f''&\text{if $n$ is {\em even},}
\end{cases}
\end{align*}
to obtain the adapted co-unital diagonal approximation $\Psi$ that agrees in degrees $0,1$ with $\Phi$,
but in degree $2$ is given by
\begin{align*}
\Psi(\diff f)&=\Phi_{2}(\diff f) +\partial\eta \equiv \diff x' + \diff x'' \bmod (x''-x,x-x')
\intertext{if $n$ is odd,}
\Psi(\diff f)&=\Phi_{2}(\diff f) +\partial\eta \equiv \diff x' + \diff x''
-\frac{n}{2}x^{n-2}\diff x'\diff x''\bmod (x''-x,x-x')
\end{align*}
if $n$ is even. (This adaptation of the multiplicative structure in case of a cyclic group already occurs in
\cite[Chap. XII \S 7, p.252]{CE}.)

At this stage, $\Hom_{{K\mu_{n}}^{\ev}}(\calt, K\mu_{n})$ with the cup product $\cup_{\Psi}$ and original differential
is the DG Clifford algebra
\begin{align*}
\Hom_{{K\mu_{n}}^{\ev}}(\calt, K\mu_{n}) &\cong \frac{K\mu_{n}\langle t,s\rangle}{(t^{2}- mx^{n-2}s, st-ts)}\,,
\end{align*}
where $m=0$ for $n$ odd, $m=n/2$ for $n$ even, while the differential satisfies $\partial(t) = nx^{n-1}s$ and
$\partial s=0$.

For the final touch, set $\tau= xt, \sigma = x^{n}s$ and note that this coordinate change is indeed invertible, as
$x$ is a unit in $K\mu_{n}$. In terms of $\tau,\sigma$ we obtain
\begin{align*}
\left(\Hom_{{K\mu_{n}}^{\ev}}(\calt, K\mu_{n}),\partial\right) &\cong 
\left(\frac{K\mu_{n}\langle \tau,\sigma\rangle}
{(\tau^{2}- m\sigma, \sigma\tau-\tau\sigma)}, \partial(\tau)= n\sigma, \partial\sigma=0\right)\,.
\end{align*}

Finally, taking the tensor product over $K$ of the Tate models for the cyclic groups involved,
their adapted diagonal approximations fit together to endow the tensor product with a co-unital diagonal 
approximation. Dualizing into $KG$ over $KG^{\ev}$ results in the DG Clifford algebra $(C(G),\partial)$ 
in the statement of the theorem.

To establish the relation to group cohomology, note first that 
\[
H(C(G),\partial) \cong KG\otimes_{K}H\left(\bigotimes_{j=1}^{r}
\left(\frac{K\langle \tau_{j}, \sigma_{j}\rangle}{(\tau_{j}^{2}-m_{j}\sigma_{j}, \tau_{j}\sigma_{j}-\sigma_{j}\tau_{j})}, 
\partial \tau_{j} = n_{j}\sigma_{j}, \partial \sigma_{j}=0
\right)\right)
\]
as $KG$ is flat over $K$. Secondly, observe that applying $K\otimes_{KG}(-)$ to the Tate model $\calt$
of $\mu:KG^{\ev}\to KG$ and then taking cohomology of $\Hom_{KG}(K\otimes_{KG}\calt, K\otimes_{KG}\calt)$
yields a homomorphism of graded algebras
\[
\Ext^{\bdot}_{KG^{\ev}}(KG,KG)\xto{\ K\otimes_{KG}(-)\ } \Ext^{\bdot}_{KG}(K,K)
\]
that factors through $K\otimes_{KG}\Ext^{\bdot}_{KG^{\ev}}(KG,KG)$. Thus, one obtains a 
homomorphism of graded algebras
\[
\label{compare}
\tag{$\diamond$}
H\left(\bigotimes_{j=1}^{r}
\left(\frac{K\langle \tau_{j}, \sigma_{j}\rangle}{(\tau_{j}^{2}-m_{j}\sigma_{j}, \tau_{j}\sigma_{j}-\sigma_{j}\tau_{j})}, 
\partial_{j}
\right)\right)
\lto \Ext^{\bdot}_{KG}(K,K)\,,
\]
where $\partial_{j}\tau_{j} = n_{j}\sigma_{j}, \partial_{j} \sigma_{j}=0$.

One may check directly that this is an isomorphism by noting that one may construct a Tate model for the 
augmentation $KG\to K$ that returns the argument of $H$ in the source of this homomorphism 
and that this is, at least, a $K$--linear isomorphism, if one initially ignores the multiplicative structure.

Alternatively, one may use \cite{SWi} to observe that the structure map
$K\to KG$ induces an algebra homomorphism 
\[
\Ext_{KG}(K,K)\lto KG\otimes \Ext_{KG}(K,K)\cong \Hoch(KG/K)
\]
that provides a splitting to the $K$--algebra 
homomorphism (\ref{compare}).
\end{proof}

\begin{remark}
One obstacle to an even more transparent proof that (\ref{compare}) is an isomorphism is that
the co-unital diagonal approximation on the Tate model for $\mu:KG^{\ev}\to KG$ does not obviously induce 
such an approximation when applying $K\otimes_{KG}(-)$. This obstacle can be overcome by exhibiting
directly such a co-unital diagonal approximation on the Tate model for the augmentation $KG\to K$, as is done 
in \cite{Roberts}, so that it will agree with the one on $K\otimes_{KG}C(G)$ exhibited above.

An explicit co-unital diagonal approximation on the Tate model for the augmentation $KG\to K$ has also been 
described in \cite{Hue} and we refer to that reference and \cite{Roberts} for explicit information on the structure of
$H(G,K)$, augmenting and reproducing results from \cite{Chap, Town}.
\end{remark}


\end{document}